\newcommand{\correct}[1]{\textcolor{blue}{#1}}
\newtheorem{theorem}{Theorem}
\newtheorem{lemma}{Lemma}
\newtheorem{corollary}{Corollary}
\newtheorem{proposition}{Proposition}
\theoremstyle{remark}
\newtheorem{rem}{Remark}
\newcommand{\proofparagraph}[1]{~\\ \noindent \textbf{#1}}
\newcommand{\proba}{\mathds{P}}
\newcommand{\esp}{\mathds{E}}
\newcommand{\bigO}{\mathcal{O}}
\newcommand{\exactbigO}{\Theta}
\newcommand{\smallo}{o}
\newcommand{\eps}{\varepsilon}
\newcommand{\mA}{\mathcal{A}}
\newcommand{\mB}{\mathcal{B}}
\newcommand{\mC}{\mathcal{C}}
\newcommand{\mF}{\mathcal{F}}
\newcommand{\mG}{\mathcal{G}}
\newcommand{\mH}{\mathcal{H}}
\newcommand{\mK}{\mathcal{K}}
\newcommand{\mSG}{\mathcal{SG}}
\newcommand{\mMG}{\mathcal{MG}}
\newcommand{\mPatch}[1]{\mathcal{P}^{#1}}
\newcommand{\de}{\,\mathrm{d}}
\newcommand{\vect}[1]{\bm{#1}}
\newcommand{\vdelta}{\vect{\delta}}
\newcommand{\vy}{\vect{y}}
\newcommand*{\ie}{\textit{i.e.},\@\xspace}
\newcommand*{\resp}{resp.\@\xspace}
\newcommand{\aut}[1]{|\operatorname{Aut}(#1)|}
\newcommand{\sg}{\operatorname{SG}}
\newcommand{\mg}{\operatorname{MG}}
\newcommand{\Patch}[1]{\operatorname{Patch}_{#1}}
\newcommand{\disjointpatch}[1]{\operatorname{Disj}\mathcal{P}^{#1}}
\newcommand{\disjointmg}{\operatorname{DisjMG}}
\newcommand{\jointmg}{\operatorname{NonDisjMG}}
\newcommand{\disjointmMG}{\operatorname{Disj}\mMG}
\newcommand{\jointmMG}{\operatorname{NonDisj}\mMG}
\newcommand{\pair}{\operatorname{Pair}}
\newcommand{\support}{\operatorname{Support}}
\title{
Threshold functions for small subgraphs in simple graphs and multigraphs}
\author{
Gwendal Collet\footnote{Institute of Discrete Mathematics and Geometry, TU Wien, Wiedner Hauptstr. 8--10/104, 1040 Wien, Austria. Supported by the Austrian Science Foundation FWF, grant SFB F50-02.}  
\\
\'Elie de Panafieu\footnote{Nokia--Bell Labs and LINCS (France). This work was partially founded by the Austrian Science Fund (FWF) grant F5004, the Amadeus program and the PEPS HYDrATA.}
\\
Dani\`ele Gardy\footnote{DAVID Laboratory, University of Versailles Saint Quentin (France). Partially supported by the Amadeus project 33697ZK \emph{Threshold problems and phase transitions in graph-like structures} (2015--16) and by the ANR-MOST project MetaConc (2015--19).}  
\\
Bernhard Gittenberger\footnote{Institute of Discrete Mathematics and Geometry, TU Wien, Wiedner Hauptstr. 8--10/104, 1040 Wien, Austria. Supported by the Austrian Science Foundation FWF, grant SFB F50-03 and the \"OAD grant Amadée F01/2015.} 
\\
Vlady Ravelomanana\footnote{IRIF, University of Paris 7 (France). Partially supported by the Amadeus project 33697ZK \emph{Threshold problems and phase transitions in graph-like structures} (2015--16), by the project \emph{Combinatorics in Paris} (2014--17)  and by the CNRS-PICS project \emph{Constraint analysis through analytic combinatorics} (2017--19).} }
\begin{document}
\maketitle

\begin{abstract}
We revisit the problem of counting the number of copies of a fixed graph in a random graph or multigraph, for various models of random (multi)graphs.
For our proofs we introduce the notion of \emph{patchworks} to describe the possible overlappings of copies of subgraphs. 
Furthermore, the proofs are based on analytic combinatorics to carry out asymptotic computations.
The flexibility of our approach allows us to tackle a wide range of problems. We obtain the asymptotic number and the limiting distribution of the number of subgraphs which are isomorphic to a graph from a given set of graphs. The results apply to multigraphs as well as to (multi)graphs with degree constraints. 
One application is to scale-free multigraphs, where the degree distribution follows a power law, for which we show how to obtain the asymptotic number of copies of a given subgraph and give as an illustration the expected number of small cycles.

\medskip
\noindent \textbf{Keywords.} random graphs, subgraphs, analytic combinatorics, generating functions, power law.
\end{abstract}



		\section{Introduction}
        

Since the introduction of the random graphs $G(n,m)$ and $G(n,p)$ 
by Erd\H os-R\'enyi~\cite{ER60} in 1960 one of the most studied parameters 
is the number of subgraphs isomorphic to a given graph $F$. 

Throughout the paper, for a given graph or subgraph $G$, $E(G)$ (resp. $V(G)$) denotes the set of its edges (resp. vertices).
For a given graph $F$ denote by $G[F]$ the number of copies of $F$ contained in the random graph $G(n,p)$ as a subgraph. 
Observe that by the asymptotic equivalence between $G(n,p)$ and $G(n,m)$ (see for instance~\cite{JaLuRu00}) results from one model can be translated into the other rigorously.
The distribution of $G[F]$ has been studied extensively since the seminal work of Erd\H os-R\'enyi \cite{ER60} who gave the first results in this direction. 
A general threshold for $\{G[F] > 0\}$ has been established in 1981 by Bollob\'as~\cite{Bo81} located at $p=n^{-1/\mu}$ where $\mu = \max_H\{|E(H)|/|V(H)|: H \subset F\}$. If the order of magnitude of $p$ is smaller than the threshold, then asymptotically almost surely there is no subgraph $F$ in 

A major reference about the distribution of $G[F]$ is the work of Ruci{\'n}ski~\cite{Ru88} stating that the number of copies of $F$ is asymptotically normal if and only if $np^{\mu} \rightarrow \infty$ and $n^2(1-p)\rightarrow \infty$. 
In the same paper, Ruci{\'n}ski proved also that at the threshold the number of subgraphs $F$ follows a Poisson law if and only if $F$ is strictly balanced.

In this context, Janson, Oleszkiewicz and Ruci\'nski~\cite{JaOlRu04} developed a moment-based method that gives estimates for $\mathbb{P}( G[F] \geq (1+\varepsilon) \mathbb{E}G[F])$ which are best possible up to logarithmic factors in the exponent (the authors proved that their exponential bounds on the upper tail of $G[F]$ are tight).
It is important to remark that the notion of strongly balanced graphs, introduced by Ruci{\'n}ski and Vince~\cite{RuVi86}, plays a key role to obtain the results mentioned above.

Recently, there has been an increasing interest in the study of constrained random graphs such as random graphs with given degree sequences or random regular graphs.
In these directions, the number of given subgraphs in such structures has been also studied. 
For instance, Wormald~\cite{wormald-survey} proved in his survey about random regular graphs that the number of short cycles in these structures follows asymptotically a Poisson distribution.
McKay, Wormald and Wysocka~\cite{McKayWormaldWysocka} consider random regular graphs of degree $d$ and show that, when $g$ is such that $(d-1)^{2g-1} = o(n)$, the numbers of cycles of length up to~$g$ are asymptotically distributed as independent Poisson variables. 
Kim, Sudakov and Vu ~\cite{KiSuVu07}, considering a regular unlabeled graph with $n$ vertices of degree~$d$ and a fixed subgraph $H$, study how the probability that a copy of $H$ occurs varies when $d$ grows, show that this probability gets close to~1 for $d$ around $n^{1 - 1/E(H)}$ and prove the convergence of the number of copies of $H$ towards a Poisson distribution.
Gao and Wormald~\cite{GaoWormald08} prove the asymptotic normality of the number of copies of a  strictly balanced subgraph $H$ in a random  $d$-regular graph when $d$ grows large.

Instead of constraining the whole graph, we can also require that the \emph{subgraph} is regular.
An article by Bollob\'as, Kim and  Verstra\"{e}te~\cite{BoKimVe06} considers the appearance of such a $k$-regular subgraph when the density of the graph is around~$4k$.
Several papers have studied the relation between a $k$-regular subgraph and a $k$-core:
Pra\l at, Verstra\"{e}te and Wormald~\cite{PrVeWo11} prove that the threshold for the appearance
of a $k$-regular subgraph is at most the threshold for the appearance of a non-empty $(k+2)$-core,
a result improved first by Chan and Molloy~\cite{ChanMolloy12}  to a $(k+1)$-core, and further by
Gao~\cite{Gao14}, who showed that the size of a $k$-regular subgraph is ``close'' to the size of the $k$-core.
Concurrently, Letzter~{\cite{Letzter13}} has obtained the existence of a sharp threshold for the existence of a $k$-regular subgraph for $k \geq 3$.
Very recently, Mitsche, Molloy and Pra\l at~\cite{MiMoPr18} proved that a random graph $G(n, p=c/n)$ typically has a $k$-regular subgraph if $c > e^{- \Theta(k)}$, which is above the threshold for the appearance of a $k$-core.

Now extend regular graphs and consider graphs whose degrees form a specified degree sequence.
An early reference on the enumeration of such graphs is the article of McKay and Wormald~\cite{McKayWormald90}, followed by 
Greenhill and McKay~\cite{GreenhillMcKay13} who studied the asymptotic number of sparse multigraphs with degree sequences.
Barvinok~\cite{Barvinok10} studies directed and bipartite graphs with prescribed degree, and two  papers by Barvinok and Hartigan~\cite{BarvinokHartigan12,BarvinokHartigan13} establish results about (uniform random) graphs with a given degree sequence. In~\cite{BarvinokHartigan12}, the authors count asymptotically the number of $m \times n$ matrices with prescribed row and column sums, so that their results can be applied to the number of graphs and bipartite graphs with prescribed degrees (on both sides); in~{\cite{BarvinokHartigan13}} they obtain the number of labeled graphs where the degree sequence is fixed.
More recently, Gao and Wormald~{\cite{GaoWormald16}} consider sparse graphs and present a survey of enumeration results for graphs with given degree sequences as well.

If we are interested in the appearance of subgraphs in graphs with specified degree sequences, a good survey of the results up to 2010 is by McKay~{\cite{BmcKay-ICM2010}}.
McKay~\cite{McKay11} again studies the  structure of a random graph with a given degree sequence, including the probability of a given subgraph or induced subgraph.
Chatterjee, Diaconis and Sly~{\cite{CDS11}} consider a general model for (dense) graphs with a given degree sequence and the existence of a limit for  sequence of such graphs; this allows them to obtain a general formula from which one might deduce results on the number of triangles (although not explicitly given).
Very recent results by Greenhill et al.~{\cite{GreenhillIsaevMcKay18}} give the asymptotic  expected number of copies of a graph and of induced subgraphs in  a random graph with a known degree sequence.
As multigraphs model many real-world networks, subgraphs counts have
also been derived for random multigraph models with prescribed degrees.
For very recent works in these directions we refer to the preprint of Angel, van der Hofstad
and Holmgren~\cite{AvdH16}
 where the authors consider multigraphs with prescribed degrees
 and study Poisson approximations of the number of self-loops and multiple
 edges as well as  an estimate on the total variation distance
between the number of self-loops and multiple edges and
the Poisson limit of their sum. Barbour and R\"ollin~\cite{BR17} provide a general normal approximation theorem
for local graph statistics in the same model.

The next step after fixing the degree sequence is allowing this sequence to follow some probability distribution.
An important class of graphs with such a distribution is that of scale-free graphs, \ie  graphs where the degree distribution follows a power law which means that the probability that a vertex has degree $d$ is proportional to $d^{-\gamma}$ for some $\gamma$.
Results from the afore-mentioned article by Gao and Wormald~{\cite{GaoWormald16}} can be applied to some power-law sequences. 
Van der Hofstad~\cite{vdHof16} gives a nice presentation of the different models for graphs; see also his  survey~\cite{Hofstad17} on the configuration model.

Van der Hofstad, Janssen, van Leeuwaarden and Stegehuis~\cite{SHJL17,HJLS17} consider triangles, or rather the \emph{clustering coefficient,} in a class of simple graphs with a hidden variables model and a power-law degree distribution.
Van der Hofstad, van Leeuwaarden and Stegehuis~\cite{HLS17a} consider the number of occurrences of a small connected graph, either as a subgraph or as an induced subgraph; all their results are for the so-called ``erased'' configuration model, which amounts to a simple graph model. In a companion paper~\cite{HLS17b} they consider clustering, \ie the probability of existence of an edge between two neighbours of a given vertex in the configuration model. When the degree of the vertex becomes at least of order $\sqrt{n}$, this probability becomes that of a power law. Their result can be used to derive the expected number of triangles, when a triangle with multiple edges is counted once (this is again the erased configuration model).

\medskip
Our goal in this paper is to revisit some of  these results and to extend them, through analytic combinatorics and extensive use of generating functions for counting graphs with a specified subgraph, or with a given number of subgraphs.

Ours is not the first paper that approaches graph problems with these tools.
Roughly at the same time as the pioneer articles of Flajolet, Knuth and Pittel~\cite{FKP89} about the appearance of cycles and of Janson \emph{et al.}~\cite{JKLP93} about the birth of the giant component, higher-dimensional multivariate generating functions where variables are associated to vertices of the graph were used by McKay and Wormald~\cite{McKayWormald90}. 
Such highly multidimensional generating functions appear again in further papers, see McKay~\cite{BmcKay-ICM2010,McKay11} and Barvinok and Hartigan~\cite{BarvinokHartigan13}.
An important development  was the study of planar graphs by Gimenez and Noy~\cite{GimenezNoy09} through analytic combinatorics, followed by several papers in the same direction.
E.g., the recent paper by Drmota, Ramos and Ru\'e~\cite{DrmotaRamosRue} deals with the limiting distribution of the number of copies of a subgraph in subcritical graphs. Noy, R\'equil\'e and Ru\'e~\cite{NRR18} study precise properties of random cubic planar graphs including, most notably for the topics of this article, a proof of asymptotic normality for the number of triangles.
Another relevant result, upon which we shall build Section~\ref{sec:results-degree-constraints}, is the enumeration of graphs whose degrees must belong to a specific set, presented by de Panafieu and Ramos~\cite{EdPR16}.

\subsubsection*{Overview of results}

In the next section we give the formal definitions of our model and the objects we are interested in: simple graphs and multigraphs, possibly weighted.

Section~\ref{sec:general_weights} presents some combinatorial results on the expected number of subgraphs that can be obtained without resorting to analytic combinatorics tools.
We give here the expected number of subgraphs belonging to a given family~$\mF$ in simple graphs or in multigraphs.
The notion of \emph{patchwork} of copies of subgraphs, which is defined there, allows us to study the distribution of the number of occurrences of a subgraph.
We are then able to consider the total number (weight) of simple graphs or of  multigraphs with a specific number of occurrences of subgraphs in~$\mF$.
We finally derive a Poisson limiting distribution for the number of occurrences of a strictly balanced subgraph in a weighted multigraph.

The tools from analytic combinatorics which we shall use in the rest of the article are generating functions enumerating families of graphs. They are presented in Section~\ref{sec:fgs-graphs-multigraphs}; we also give there the first generating functions for the families we consider.

The following sections are devoted to probabilistic results under two different random models. In Section~\ref{sec:subgraphs}, (multi)graphs are chosen uniformly at random among all graphs of the same size (number of vertices and edges): This is reminiscent of the Erd\H{o}s-R\'enyi $G(n,m)$ model. In Sections~\ref{sec:results-degree-constraints} and \ref{sec:failed-assumptions}, we consider weighted (multi)graphs to study different degree distributions. Both distributions can be achieved by the means of Boltzmann samplers~\cite{DFLS04}, presented in more detail in Section~\ref{sec:config-Boltzmann}, which automatically translate our combinatorial decomposition of the (multi)graphs (weighted or not) into a random sampler.
In particular, the Boltzmann sampler for multigraphs weighted according to their degrees
produces multigraphs following the same distribution as the configuration model does.
This equivalence, detailed in Section~\ref{sec:config-Boltzmann},
bridges the gap between analytic combinatorics and the ``pure'' probabilistic setting.

We address the problem of evaluating the number of copies of a given subgraph in Section~\ref{sec:subgraphs}, for both simple graphs and multigraphs.
Theorems~\ref{th:distinguished_multi} and~ \ref{th:distinguished_simple} give exact and asymptotic expressions for the number of multigraphs and simple graphs with one distinguished subgraph in~$\mF$; then Theorems~\ref{th:exact_mgF} and~\ref{th:subgraphs} give the number of multigraphs and simple graphs with exactly $t$ subgraphs in~$\mF$.
The probability that there is at least one copy of a subgraph of~$\mF$ with high density goes to~0, as shown in Corollary~\ref{th:general_small_density} for multigraphs when $m = o(n^{2 - 1/d(F)})$;
Corollary~\ref{cor:essential_density_multigraph} is a more precise variant of this result when we know the essential density of the subgraph.
Corollary \ref{cor:prob-densest-simple} is the equivalent result for simple graphs; it gives a new proof of the upper bound on the average number of copies of a densest subgraph in a simple graph for $m = \bigO(n^\alpha)$, $\alpha$ fixed $<2$, and shows that the number of copies tends almost surely to~0 when $\alpha < 2 - 1/d^* (F)$, with $d^*(f)$   the essential density of the subgraph~$F$.
Finally, Theorems~\ref{th:limit_law_multi} and~\ref{th:limit_law_simple} prove a Poisson distribution for the number of copies of a strictly balanced subgraph $F$ for a random multigraph and for a random simple graph, respectively, in the range $m \sim c n^{2-1/d(F)}$ where $d(F)$ is the density of the subgraph~$F$.

Section~\ref{sec:results-degree-constraints} considers how to extend  these results to  multigraphs with degree constraints. 
We present our model of randomness for simple graphs or multigraphs with degree constraints in Section~\ref{sec:degree-constraints} and examine its relation to the well-known \emph{configuration model} in Section~\ref{sec:config-Boltzmann}.
Theorem~\ref{th:distinguished_degree_constraints-symbolic} is the analog of Theorems~\ref{th:distinguished_multi} and~\ref{th:distinguished_simple} for weighted graphs; it gives an exact expression for the total weight of multigraphs with one distinguished subgraph in the family~$\mF$, from which we derive the expected number of subgraphs belonging to~$\mF$ and the probability that there are $t$ such subgraphs in Corollaries~\ref{th:deg_constraints_mean} and~\ref{th:deg_constraints_exact_subgraphs}.
To get more precise results, we have to consider properties of the set of weights.
Weight sets with only finitely many nonzero elements are considered in Section~\ref{sec:finite-set-weights} for $n$ and $m$ proportional. There we show that the only subgraphs that have a nonzero probability are trees and unicycles, derive the expected number of copies of a tree, and prove a Poisson limiting distribution for the number of occurrences of cycles of given length (Theorem~\ref{th:finite_trees_cycles}).
We first obtain a general result when the weights do not grow too fast (in a sense that we make precise), then derive the weighted equivalent of Theorems~\ref{th:limit_law_multi} and~\ref{th:limit_law_simple} (this is Theorem~\ref{th:limit_multi_weights}).

We consider further examples of weight sets in Section~\ref{sec:failed-assumptions}, most notably quickly increasing weights.
This occurs for instance in the important case of \emph{power-law multigraphs}, also known as \emph{scale-free networks}, which are treated in Section~\ref{sec:power-law}.
We sketch there a general approach to finding the expected number of copies of a sub-multigraph in a multigraph and give a complete answer for small cycles in Theorem~\ref{th:power-law-cycles}.
We also get results on the threshold for the appearance of trees in the case where $m/n \rightarrow 0$ in Section~\ref{sec:failed-trees} (this is Theorem~\ref{th:weights-trees}), and on multigraphs where the set of vertex degrees is periodic in Section~\ref{sec:failed-regular}.


		\section{Models and definitions}\label{sec_Models}
        

Most of the following definitions come from Erd\H os-R\'enyi~\cite{ER60} and Bollob\'as~\cite{Bo81} for graphs, and from Flajolet, Knuth and Pittel~\cite{FKP89}, Janson \emph{et al.}~\cite{JKLP93} or more recently \cite{EdPR16} for multigraphs.

	\paragraph{Graphs.}
    
A simple graph, or graph, $G$ is a pair $(V(G),E(G))$, where $V(G)$ denotes the set of vertices carrying distinct labels, and $E(G)$ the set of edges.
Each edge is an unoriented pair of distinct vertices, thus loops and multiple edges are forbidden.
An $(n,m)$-graph is a  simple graph with $n$ vertices and $m$ edges.
The labels of the vertices are distinct integers.
When no other constraint is added, the labeling is said to be \emph{general}.
When the vertices are labeled from $1$ to $n$, the labeling is said to be \emph{canonical}.
Unless otherwise mentioned, the graphs considered have canonical labeling.
The set of all simple graphs with canonical labeling is denoted by $\mSG$.

    \paragraph{Multigraphs.}

We define a multigraph as a graph-like object
with labeled vertices, and labeled oriented edges,
where loops and multiple edges are allowed.
More formally, a multigraph $G$ is a pair $(V(G), E(G))$,
where $V(G)$ is the set of labeled vertices
and $E(G)$ the set of labeled edges 
(the edge labels are independent from the vertex labels).
Each edge is a triple $(v,w,e)$,
where $v$, $w$ are vertices,
and $e$ is the label of the edge (which is oriented from $v$ to $w$). 
A loop is a triple $(v,v,e)$ and a multiple edge is a set of at least two edges $\{(u,v,e_1), \dots, (u,v,e_k)\}$.
An $(n,m)$-multigraph is a multigraph with $n$ vertices and $m$ edges.
Again, the vertex labels are distinct integers,
and the edge labels are distinct integers.
When no other constraint is added,
the labeling is said to be \emph{general}.
When the vertices are labeled from $1$ to $n$
and the edges from $1$ to $m$, the labeling is said to be \emph{canonical}.
The set of all multigraphs with canonical labeling is denoted by $\mMG$,
and unless otherwise mentioned, the multigraphs considered have canonical labeling.

Notice that, although a given multigraph may have neither loops nor multiple edges, 
it would still not be a simple graph, as its edges are oriented and labeled. 
The orientation of the edges allows a very simple description of a canonical $(n,m)$-multigraph $G$ as a sequence $(v_1,\dots,v_{2m})$, where $v_i \in V(G)$ and $e_j = (v_{2j-1},v_{2j}) \in E(G), \forall 1\le j \le m$. This model of multigraphs can be found in the literature under the name of \emph{quiver} or \emph{multidigraph}, and appears notably in category theory and representation theory (see~\cite{CMP09}. It introduces a bias in the enumeration of multigraphs and so differs from the more classical model of vertex-labeled, edge-labeled multigraphs since loops have only one possible orientation, while other edges have two.

\medskip

\begin{figure}[h]\label{fig:multi_graphs}
	\centering
	\includegraphics[width=0.6\linewidth]{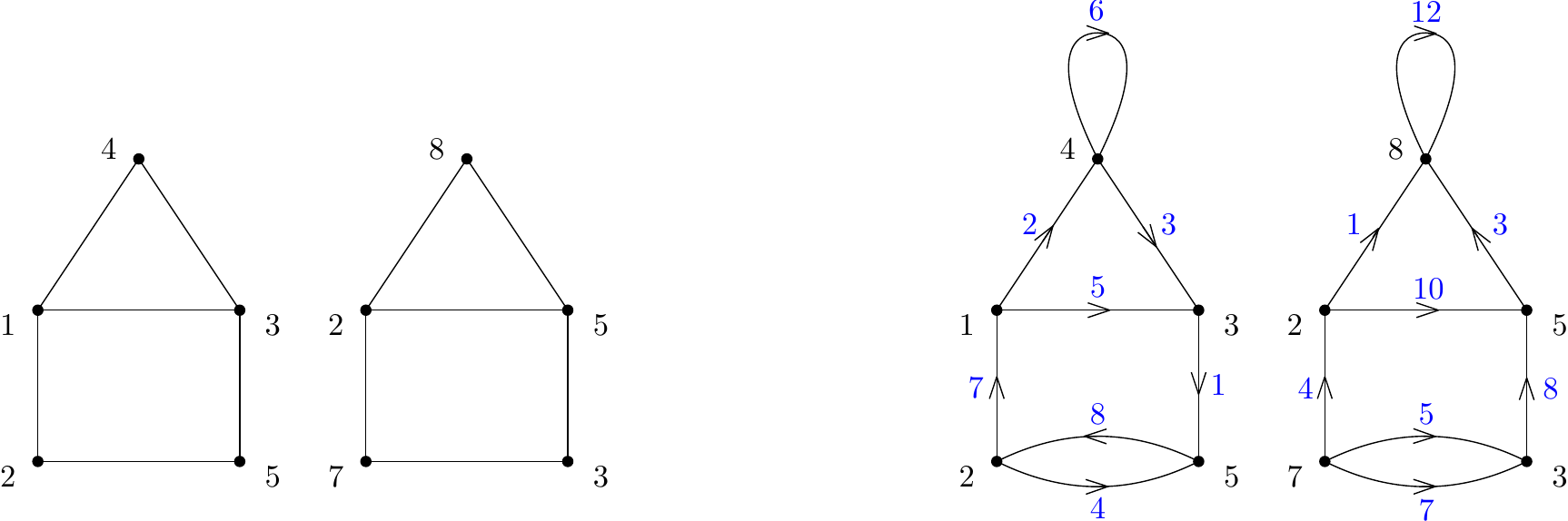}
    \caption{On the left, two isomorphic simple graphs (the first one with canonical labeling). On the right, two isomorphic multigraphs (the first one with canonical labeling).}
\end{figure}

The following definitions stand for both simple graphs and multigraphs. As such, they are stated for (multi)graphs, which can refer either to simple graphs or to multigraphs depending on the context.

	\paragraph{Isomorphic (multi)graphs.}

Two (multi)graphs $G$ and $H$ with general labeling are \emph{isomorphic}
if there exists a bijection $\alpha$ between $V(G)$ and $V(H)$
that induces a bijection $\beta$ between $E(G)$ and $E(H)$, \ie
\[
	\textrm{(for simple graphs) }
    \forall \{v,w\} \in E(G), \beta(\{v, w\}) = \{\alpha(v), \alpha(w)\} \in E(H) ;
\]
\[
	\textrm{(for multigraphs) }
    \forall v,w \in V(G):\ \{\beta(v, w, e) : (v,w,e) \in E(G)\} = \{(\alpha(v), \alpha(w), e') \in E(H)\}.
\]
We also say that $H$ is a $G$-(multi)graph.
Notice that (multi)graph isomorphism is independent from labels. 
Given a (multi)graph family $\mF$, $H$ is a $\mF$-(multi)graph
if it is isomorphic to an element of $\mF$.

	\paragraph{Subgraphs.}

A (multi)graph $F$ is a \emph{subgraph} of a (multi)graph $G$
if $V(F) \subset V(G)$ and $E(F) \subset E(G)$.
We then write $F \subset G$.

Given a (multi)graph family $\mF$ and a (multi)graph $G$,
an $\mF$-subgraph of $G$ is a subgraph of $G$
which is isomorphic to an element of $\mF$.
The number of $\mF$-subgraphs of $G$ is denoted by $G[\mF]$.
When $\mF$ is a singleton $\{H\}$,
we simply write $G[H]$ instead of $G[\{H\}]$.

\begin{figure}[h]\label{fig:subgraph}
	\centering
    \includegraphics[width=0.55\linewidth]{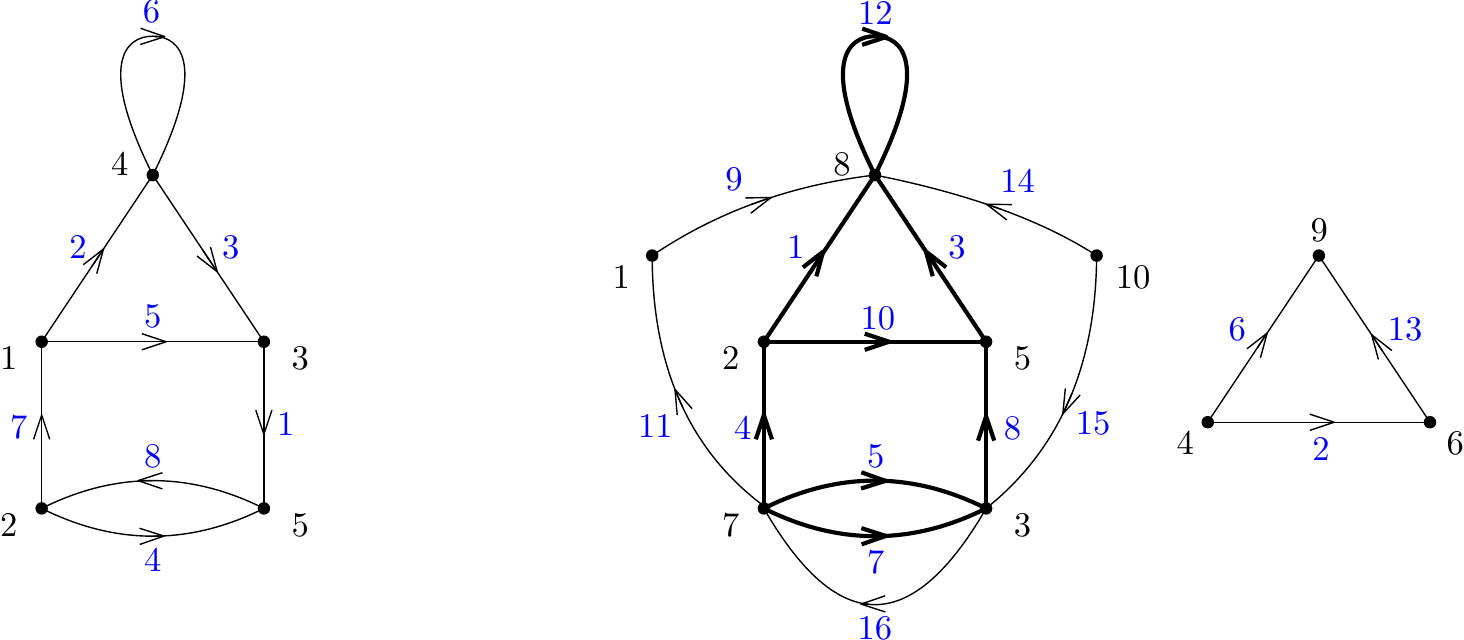}
    \caption{On the left, a multigraph appearing as a subgraph on the (non-connected) multigraph on the right.}
\end{figure}

	\paragraph{Weighted (multi)graphs.}

A \emph{weighted (multi)graph family} $\mF_\omega$
is a (multi)graph family $\mF$ equipped with a \emph{weight} $\omega$,
which is a function from $\mF$ to a given set.
In this article, this set will be either the nonnegative real numbers (see Section~\ref{sec:degree-constraints} for weights related to the degree sequence of the multigraph, or Section~\ref{sec:power-law} for a concrete example with weights following a power law) or the polynomials in the variable $u$ with real coefficients (which can be seen as formal weights, useful to track some parameters in (multi)graphs, see Section~\ref{sec:patchworks}).
The weight of a (multi)graph $G$ is denoted by $\omega(G)$.
The weight $F_{\omega}$ of the family $\mF_{\omega}$ is defined as the sum of the weights of its elements.
The \emph{trivial weight} is the one that assigns to each (multi)graph the value $1$.
When weight $\omega$ is the trivial one, it is omitted in the notations,
so $F$ denotes the trivial weight of the family $\mF$, equal to its cardinality.
Hence, the results on weighted (multi)graphs extend the enumerative results.
When no other weight is specified, the weight is assumed to be the trivial one.

Weights are usually associated to (multi)graph families which are equipped with a non-uniform distribution. For instance, while with the trivial weight each (multi)graph is equally likely, replacing the weight of a single (multi)graph $H$ by $2$ would make it twice as likely to be picked in the new weighted distribution.

We state some general results on the distribution of subgraphs
in a weighted (multi)graph family in Section~\ref{sec:general_weights}.
In Section~\ref{sec:subgraphs}, we obtain more precise results
for the trivial weight, \ie for the usual models of graphs and multigraphs.
Finally, we consider the case where the weight of a (multi)graph depends on the degrees of its vertices in Section~\ref{sec:results-degree-constraints}.

	\paragraph{Counting and probabilities.}

For enumeration purposes, we will only consider families of (multi)graphs with canonical labeling.
Given a family $\mF$ of (multi)graphs,
the set of elements of $\mF$ having $n$ vertices and $m$ edges is denoted by $\mF_{n,m}$,
and its cardinality by $F_{n,m}$.
For instance, $\sg_{n,m}$ is equal to $\binom{\binom{n}{2}}{m}$,
while $\mg_{n,m}$ is equal to $n^{2m}$,
as the labels and orientations of the edges induce
a canonical representation of any multigraph as a sequence of $2m$ vertices.
When a weight $\omega$ is specified,
the weight of the set of all (multi)graphs in the (multi)graph family $\mF_{\omega}$
which have $n$ vertices and $m$ edges is denoted by $F_{n,m,\omega}$.

In our model, a random $(n,m)$-(multi)graph is then
a (multi)graph chosen uniformly at random
from the set $\mSG_{n,m}$ (\resp $\mMG_{n,m}$):
\[
	\proba(G \in \mSG_{n,m}) = \frac{1}{\sg_{n,m}},
    \quad \proba(G\in \mMG_{n,m}) = \frac{1}{\mg_{n,m}}.
\]
This corresponds to the model $\mathcal{G}(n,m)$ introduced by Erd\H{os} and R\'enyi~\cite{ER60}.
When a weight $\omega$ is specified,
then a random $(n,m,\omega)$-(multi)graph is an $(n,m)$-(multi)graph
chosen with probability proportional to its weight:
\[
	\proba(G \in \mSG_{n,m,\omega}) = \frac{\omega(G)}{\sg_{n,m,\omega}},
    \quad \proba(G \in \mMG_{n,m,\omega}) = \frac{\omega(G)}{\mg_{n,m,\omega}}.
\]

	\paragraph{Density and balance.}

Erd\H{os} and R\'enyi~\cite{ER60} observed first
that the analysis of the number of $F$-subgraphs
in a random $(n,m)$-graph is easier
when the graph $F$ is \emph{strictly balanced}.
We recall the definition of this property below.

The \emph{density} of a (multi)graph $G$
is defined as the ratio between the numbers of its edges
and its vertices, and is denoted by
\[
    d(G) = \frac{m(G)}{n(G)}.
\]
By convention, the empty (multi)graph has density $0$.
The \emph{essential density} $d^\star(G)$ of a (multi)graph $G$
is the density of a subgraph of maximal density: 
\[
    d^\star(G) = \max_{H \subset G} d(H).
\]
A (multi)graph $F$ is \emph{strictly balanced} if its density
is greater than the density of all its strict subgraphs:
\[
    d(F) > \max_{H \varsubsetneq F} d(H).
\]
It is \emph{balanced} if
\[
	d(F) \geq \max_{H \varsubsetneq F} d(H),
\]
and \emph{barely balanced} if it is balanced,
but not strictly balanced.
Equivalently a (multi)graph is balanced if and only if $d(F) =  d^\star(F)$.

\begin{lemma} \label{th:density_of_pairs}
Given a connected (multi)graph $F$,
let $\pair_F$ denote the set of (multi)graph
such that $H$ is in $\pair_F$ if and only if
it is obtained by merging two distinct non-disjoint $F$-(multi)graphs $F_1$, $F_2$, \ie
\[
	V(H) = V(F_1) \cup V(F_2), \quad
    E(H) = E(F_1) \cup E(F_2), 
\]
and $V(F_1) \cap V(F_2)$ is non-empty.
If $F$ is strictly balanced, then the density of any (multi)graph from $\pair_F$
is greater than the density of~$F$.
\end{lemma}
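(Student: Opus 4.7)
The plan is to translate the desired density inequality into a density comparison between $F$ and the overlap of $F_1$ and $F_2$, and then invoke strict balance. Set $v = |V(F)|$, $e = |E(F)|$, and let $v' = |V(F_1) \cap V(F_2)|$, $e' = |E(F_1) \cap E(F_2)|$. Since $V(H) = V(F_1) \cup V(F_2)$ and $E(H) = E(F_1) \cup E(F_2)$, inclusion--exclusion yields $|V(H)| = 2v - v'$ and $|E(H)| = 2e - e'$, whence
\[
d(H) \;=\; \frac{2e - e'}{2v - v'}.
\]
A one-line cross-multiplication shows that $d(H) > e/v$ is equivalent to $ev' > ve'$; the hypothesis that $F_1$ and $F_2$ are non-disjoint gives $v' \ge 1$, so this in turn is equivalent to $e'/v' < d(F)$.

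It then remains to prove $e'/v' < d(F)$. I would view the pair $J := (V(F_1) \cap V(F_2),\, E(F_1) \cap E(F_2))$ as a genuine subgraph of $F_1$: any edge common to $F_1$ and $F_2$ has both endpoints in both vertex sets, hence in $V(J)$. Next, $J$ is a strict subgraph of $F_1$: if $V(F_1) \ne V(F_2)$ then $v' < v$; otherwise $V(F_1) = V(F_2)$, and since $F_1 \ne F_2$ we must have $E(F_1) \ne E(F_2)$, giving $e' < e$. Transporting $J$ through the isomorphism $F_1 \cong F$ produces a strict subgraph of $F$, and strict balance of $F$ yields $d(J) < d(F)$. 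In the edge case $e' = 0$ this amounts to the remark $0 < d(F)$, which holds because strict balance forces $d(F) > d(\emptyset) = 0$; when $e' \ge 1$ it is the direct strict-balance inequality applied to $J$.

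The only genuinely delicate point is the verification that $J$ is a proper subgraph of $F_1$: one might worry that some coincidence yields $V(F_1) \cap V(F_2) = V(F_1)$ and $E(F_1) \cap E(F_2) = E(F_1)$ simultaneously, but that would force $F_1 \subseteq F_2$ and hence, by matching vertex and edge counts, $F_1 = F_2$, contradicting distinctness. Everything else is a routine inclusion--exclusion computation together with a single application of the strict-balance hypothesis.
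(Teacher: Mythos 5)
Your proof is correct and is in substance the same argument the paper gives (the proof appears under Lemma~\ref{th:strict_balanced}, which restates this lemma for patchworks): both reduce the claim to showing that the density of the overlap $J=(V(F_1)\cap V(F_2),\,E(F_1)\cap E(F_2))$ is strictly below $d(F)$ and then invoke strict balance, the paper phrasing this via a mediant argument with the complement $I=F_1\setminus J$ while you do the equivalent cross-multiplication directly. Your write-up is in fact slightly more careful than the paper's, since you explicitly verify that $J$ is a \emph{proper} subgraph of $F_1$ (using distinctness of $F_1$ and $F_2$) and handle the degenerate case $e'=0$.
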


	\section{Subgraphs in weighted graphs and multigraphs}
    \label{sec:general_weights}

In this section, we investigate the distribution of finite subgraphs in a random $(n,m,\omega)$-(multi)graph, for a general weight function $\omega$, and reduce the study of this distribution to the analysis of $(n,m,\omega)$-(multi)graphs where an $\mF$-subgraph is distinguished, for a well chosen (multi)graph family $\mF$. The weighted number of all $(n,m,\omega)$-(multi)graphs is denoted by $\mg_{n,m,\omega}$

\smallskip
A (multi)graph $G$ with a distinguished subgraph $F$
can be represented as a pair $(G,F)$.
Consider two weighted families $\mG_{\omega_{\mG}}$ and $\mF_{\omega_{\mF}}$; $\mG_{\omega_{\mG}}^{[\mF_{\omega_{\mF}}]}$ denotes the weighted family of all pairs $(G,F)$
with $G \in \mG$ and $F$ being an $\mF$-subgraph of $G$.
The weight of $(G,F)$ is then implicitly defined as
\[
	\omega((G,F)) := \omega_{\mG}(G) \omega_{\mF}(F).
\]
Therefore, the total weight $\mg^{[\mF_{\omega_{\mF}}]}_{n,m,\omega_{\mG}}$ of all $(n,m)$-(multi)graphs
with a distinguished $\mF_{\omega_{\mF}}$-subgraph is equal to
\[
	\mg_{n,m,\omega_{\mG}}^{[\mF_{\omega_{\mF}}]} :=
    \sum_{\substack{\text{$(n,m)$-(multi)graph $G$}\\ \text{$\mF$-subgraph $F$ of $G$}}} \omega_{\mG}(G) \omega_{\mF}(F).
\]
When no weight function $\omega_{\mF}$ is provided, the trivial weight $1$ is used.
This total weight plays a central role in the article.

\medskip
In the rest of this section, we provide three propositions
that reduce the study of the number $G[\mF]$ of $\mF$-subgraphs in a random $(n,m,\omega)$-(multi)graph $G$
to the analysis of $\mg_{n,m,\omega}^{[\mH_{\omega_{\mH}}]}$,
for well chosen families $\mH_{\omega_{\mH}}$.
The expected value of $G[\mF]$ is computed in Proposition~\ref{th:general_weight_mean} using 
$\mg_{n,m,\omega}^{[\mH_{\omega_{\mH}}]}$.
Proposition~\ref{th:number_of_subgraphs_general_weights} then gives an exact expression for the total weight of $(n,m,\omega)$-(multi)graphs with exactly $t$ $\mF$-subgraphs.
Finally, Proposition~\ref{th:general_weights_second_asymptotics}
provides technical conditions on the weights
so that the limit law of $G[F]$ is a Poisson law for any strictly balanced (multi)graph $F$.

	\subsection{Expected number of subgraphs}

\begin{proposition}[Expected number of subgraphs, weights, simple graphs and multigraphs] 
\label{th:general_weight_mean}
The expected number of $\mF$-subgraphs
in a random $(n,m,\omega)$-graph and a random $(n,m,\omega)$-multigraph is
\[
	\frac{\sg_{n,m,\omega}^{[\mF]}}{\sg_{n,m,\omega}} 
    \quad \text{and} \quad
	\frac{\mg_{n,m,\omega}^{[\mF]}}{\mg_{n,m,\omega}}, \quad\text{respectively}, 
\]
where $\sg_{n,m,\omega}$ is the weighted number of all $(n,m,\omega)$-graphs,
and $\sg_{n,m,\omega}^{[\mF]}$ is the weighted number of all $(n,m,\omega)$-graphs
with a distinguished $\mF$-subgraph (and similarly for multigraphs).
\end{proposition}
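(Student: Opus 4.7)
The plan is to prove this by a direct double-counting / linearity-of-expectation argument, writing the expectation as a sum over pairs (graph, distinguished subgraph) and recognizing it as the defining sum for $\sg_{n,m,\omega}^{[\mF]}$.

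First I would write out the expectation from the definition of the random $(n,m,\omega)$-graph model given just above the statement: since $\proba(G) = \omega(G)/\sg_{n,m,\omega}$,
\[
	\esp[G[\mF]] \;=\; \sum_{G \in \mSG_{n,m}} \frac{\omega(G)}{\sg_{n,m,\omega}} \, G[\mF].
\]
Next I would expand $G[\mF]$ as an explicit indicator sum over $\mF$-subgraphs of $G$, namely $G[\mF] = \sum_{F \subset G,\ F \text{ an } \mF\text{-subgraph}} 1$. Substituting, pulling $1/\sg_{n,m,\omega}$ out, and swapping the order of summation gives a single sum over pairs $(G,F)$ where $G$ is an $(n,m)$-graph and $F$ is an $\mF$-subgraph of $G$, weighted by $\omega(G)$.

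By the definition of $\sg_{n,m,\omega}^{[\mF]}$ stated just before the proposition (taking the trivial weight $\omega_{\mF} \equiv 1$, as is the default when no weight on $\mF$ is specified), this double sum is exactly $\sg_{n,m,\omega}^{[\mF]}$, yielding the stated ratio. The multigraph case is word-for-word identical, replacing $\mSG_{n,m}$ by $\mMG_{n,m}$ and $\sg_{n,m,\omega}$ by $\mg_{n,m,\omega}$, because none of the manipulations use any structural property of simple graphs beyond the weighted uniform sampling.

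There is no real obstacle here: the statement is essentially a bookkeeping identity, and the only thing to be careful about is to make the convention on $\omega_{\mF}$ explicit, so that the pair-weight $\omega_{\mG}(G)\omega_{\mF}(F)$ from the definition of $\sg_{n,m,\omega}^{[\mF_{\omega_{\mF}}]}$ collapses to $\omega(G)$ in our setting. I would state this convention at the start of the proof and then let the computation above conclude.
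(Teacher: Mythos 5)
Your proposal is correct and follows essentially the same route as the paper's proof: both expand the expectation as a weighted sum of $G[\mF]$ over all $(n,m)$-graphs and re-index it as a sum over pairs $(G,F)$ with $F$ a distinguished $\mF$-subgraph, which is by definition $\sg_{n,m,\omega}^{[\mF]}$. Your explicit remark about the trivial weight convention on $\omega_{\mF}$ is a small but welcome clarification that the paper leaves implicit.
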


\begin{proof}
We give the proof for graphs, the proof for multigraphs being identical.
By definition, the expected number of $\mF$-subgraphs in a random $(n,m,\omega)$-graph
is equal to
\[
	\frac{1}{\sg_{n,m,\omega}}
    \sum_{\substack{G \in \mSG_{n,m}\\ F \in \mF}}
    G[F] \omega(G).
\]
Since $G$ contains $G[F]$ $F$-subgraphs, there are $G[F]$ pairs $(G,F)$ in $\mSG_{n,m}^{[\mF]}$,
so the expected number is also equal to
\[
	\frac{1}{\sg_{n,m,\omega}}
    \sum_{(G,F) \in \mSG_{n,m,\omega}^{[\mF]}} \omega(G) =
    \frac{\sg_{n,m,\omega}^{[\mF]}}{\sg_{n,m,\omega}}.\qedhere 
\]
\end{proof}

    	\subsection{Exact number of subgraphs and patchworks} \label{sec:patchworkdef}

When counting the number of (multi)graphs with $n$ vertices, $m$ edges and containing exactly $t$ $\mF$-subgraphs,
we run into a difficulty: those subgraphs may overlap.
To describe these overlaps, we use the notion of an $\mF$-\emph{patchwork}, defined below.

	\paragraph{Patchworks for multigraphs.}
    
Given a family $\mF$ of multigraphs,
an $\mF$-patchwork is a finite set of distinct $\mF$-multigraphs (called \emph{pieces})
\[
    P = \{ (V_1, E_1), \ldots, (V_{|P|}, E_{|P|})\},
\]
each carrying a general labeling (\ie the vertex or edge labels need not be consecutive integers staring at $1$)
which may share vertices and edges, such that
if two vertices (from two distinct pieces) share the same labels,
then they are merged and similarly,
if two edges from distinct pieces share the same label, then they are merged.
In particular, this implies that two edges sharing a label
must connect the same two vertices with the same orientation.
The vertices and edges of $P$ are
\begin{equation} 
\label{PW-graph}
    V(P) = \bigcup_{i=1}^{|P|} V_i, \quad \text{and} \quad E(P) = \bigcup_{i=1}^{|P|} E_i,
\end{equation}
and their cardinalities are denoted by $n(P)$ and $m(P)$.
The size of $P$ is the number of pieces and is denoted by $|P|$.
This notion is illustrated in Figure~\ref{fig:patchwork_multi}.

\begin{figure}[h]
\begin{center}
\includegraphics[width=\linewidth]{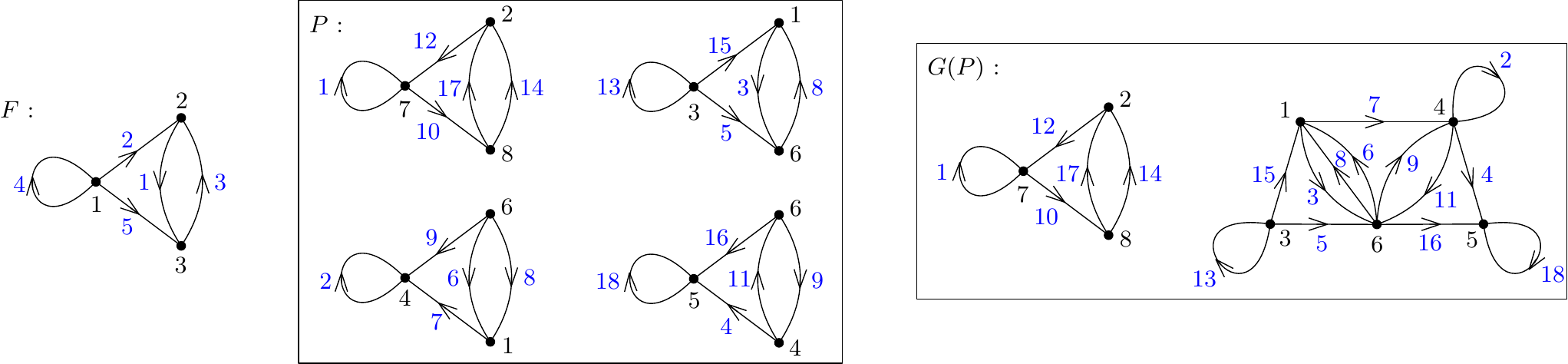}
\caption{A multigraph $F$ (left) and a $F$-patchwork $P$ of size 4 (center) with its representation $G(P)$ as a multigraph (right).}
\label{fig:patchwork_multi}
\end{center}
\end{figure}

A patchwork is called \emph{disjointed} if it contains no pair of pieces sharing one vertex or more.

	\paragraph{Patchworks for simple graphs.}

The notion of a patchwork can be naturally adapted to simple graphs.
For a simple graph family $\mF$, each piece of an $\mF$-patchwork
is then an $\mF$-subgraph and pieces can share vertices and edges.
As distinguished from multigraphs, edges connecting the same two vertices in distinct pieces
are necessarily merged in the patchwork, so as to avoid multiple edges.

\begin{figure}[h]
\begin{center}
\includegraphics[scale=0.8]{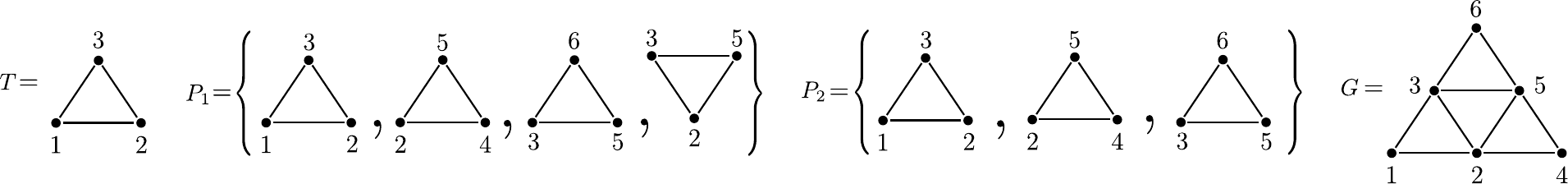}
\caption{A graph $T$ and two $T$-patchworks $P$ and $P'$ that have the same graph representation $G(P) = G(P') = G$.} 
\label{fig:patchwork}
\end{center}
\end{figure}

	\paragraph{Weights of patchworks and exact number of subgraphs.}

The family $\mPatch{\mF}(u)$ denotes the set of $\mF$-patchworks
equipped with the weight function
\[
	\omega(P) := u^{|P|},
\]
so the weight of a patchwork $P$ is the variable $u$
raised to the power equaling the number of pieces of $P$.

Similarly, the family $\disjointpatch{\mF}$ denotes the subset of disjointed $\mF$-patchworks.

\begin{proposition}[Total weight, simple and multigraphs]
\label{th:number_of_subgraphs_general_weights}
The total weight $\sg_{n,m,\omega,t}^{\mF}$ of all $(n,m,\omega)$-graphs
that contain exactly $t$ $\mF$-subgraphs is 
\[
	\sg_{n,m,\omega,t}^{\mF} := [u^t] \sg_{n,m,\omega}^{[\mPatch{\mF}(u-1)]}.
\]
Likewise, for any multigraph family $\mF$,
the total weight $\mg_{n,m,\omega,t}^{\mF}$ of all $(n,m,\omega)$-multigraphs
that contain exactly $t$ $\mF$-subgraphs is
\[
	\mg_{n,m,\omega,t}^{\mF} := [u^t] \mg_{n,m,\omega}^{[\mPatch{\mF}(u-1)]}.
\]
\end{proposition}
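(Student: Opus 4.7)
The plan is to establish both equalities by a direct inclusion--exclusion style identity, realized combinatorially through the substitution $u \mapsto u-1$. I would treat both cases simultaneously, since the argument is identical once the key bijection is set up, and present the simple-graph version first.

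The central observation is the following: for any fixed $(n,m)$-(multi)graph $G$, the $\mF$-patchworks $P$ whose pieces are all $\mF$-subgraphs of $G$ (with the labels inherited from $G$) are in bijection with the subsets of the set of $\mF$-subgraphs of $G$, the bijection sending a subset of size $s$ to the patchwork whose pieces are exactly the elements of that subset. This bijection is well defined because the pieces of a patchwork are by definition distinct and recoverable from $P$, and because the labels inherited from $G$ force the correct merging of shared vertices and edges prescribed in the definition of $\mPatch{\mF}$. In particular, the number of such patchworks of size exactly $s$ is $\binom{G[\mF]}{s}$, so
\[
\sum_{P} u^{|P|} \;=\; \sum_{s \geq 0} \binom{G[\mF]}{s} u^{s} \;=\; (1+u)^{G[\mF]},
\]
where $P$ ranges over all $\mF$-patchworks embedded in $G$.

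Multiplying by $\omega(G)$ and summing over all $(n,m)$-(multi)graphs $G$ translates this into
\[
\sg_{n,m,\omega}^{[\mPatch{\mF}(u)]} \;=\; \sum_{G \in \mSG_{n,m}} \omega(G)\,(1+u)^{G[\mF]},
\]
with the analogous identity for $\mg_{n,m,\omega}^{[\mPatch{\mF}(u)]}$. Performing the substitution $u \mapsto u-1$ on both sides collapses $(1+u)^{G[\mF]}$ to $u^{G[\mF]}$, giving
\[
\sg_{n,m,\omega}^{[\mPatch{\mF}(u-1)]} \;=\; \sum_{G} \omega(G)\, u^{G[\mF]} \;=\; \sum_{t \geq 0} u^{t} \sg_{n,m,\omega,t}^{\mF}.
\]
Extracting $[u^{t}]$ on both sides yields the stated formula, and the multigraph case follows word for word.

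The only step that requires genuine care — and the one I would spell out in detail — is the bijection underlying the binomial identity. For simple graphs this is essentially immediate, because two $\mF$-subgraphs with the same vertex and edge sets are the same subgraph; for multigraphs one has to verify that the patchwork merging rules (identifying vertices with equal labels, identifying edges with equal labels, and requiring shared edge labels to respect endpoints and orientation) do not accidentally glue two distinct $\mF$-sub-multigraphs into the same piece or fail to reproduce the chosen pieces when restricting. Once this is checked, the rest is the purely formal polynomial manipulation above, so this verification is the only real content of the proof.
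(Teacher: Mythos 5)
Your proof is correct and is essentially the paper's own argument: both rest on the identification of $\mF$-patchworks embedded in $G$ with subsets of the $\mF$-subgraphs of $G$, giving $\sum_P u^{|P|} = (1+u)^{G[\mF]}$, followed by the substitution $u \mapsto u-1$ and coefficient extraction. Your extra care about why the patchwork merging rules make this a genuine bijection is a welcome elaboration of a step the paper states without detail, but it does not change the route.
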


\begin{proof}
We give the proof for graphs, as the proof for multigraphs is identical.
It relies on the interpretation of the inclusion-exclusion principle
commonly used in analytic combinatorics (see Flajolet and Sedgewick~\cite[Section III.7.4]{FS09}).
Recall that $G[\mF]$ denotes the number of occurrences
of $\mF$-subgraphs in the graph $G$.
Let $\mSG_{n,m,\omega}^{\mF}(u)$ denote the graph family
that contains all $(n,m)$-graphs,
equipped with the weight $\omega_u$ defined as
\[
	\omega_u(G) := \omega(G) u^{G[\mF]}.
\]
Its weight is denoted by $\sg_{n,m,\omega}^{\mF}(u)$ and is equal to
\[
	\sg_{n,m,\omega}^{\mF}(u) =
    \sum_{G \in \mSG_{n,m}} \omega(G) u^{G[\mF]},
\]
and the weight of the $(n,m,\omega)$-graphs that contain exactly $t$ $\mF$-subgraphs is
then the $t$-th coefficient of $\sg_{n,m,\omega}^{\mF}(u)$
\begin{equation} \label{eq:number_of_subgraphs_general_weights}
	\sg_{n,m,\omega,t}^{\mF} = [u^t] \sg_{n,m,\omega}^{\mF}(u).
\end{equation}
When we evaluate $\sg_{n,m,\omega}^{\mF}(u)$ at $u+1$ instead of $u$
and develop the powers of $u+1$, we obtain
\[
	\sg_{n,m,\omega}^{\mF}(u+1) =
    \sum_{G \in \mSG_{n,m}} \sum_{j=0}^{G[\mF]} \omega(G) \binom{G[\mF]}{j} u^j.
\]
The binomial coefficient $\binom{G[\mF]}{j}$ is equal
to the number of $\mF$-patchworks with $j$ pieces contained in $G$,
and $u^j$ is the weight of each of those patchworks.
Hence, $\sg_{n,m,\omega}^{\mF}(u+1)$ is also the total weight
of the $(n,m,\omega)$-graphs where an $\mF$-patchwork is distinguished:
\[
	\sg_{n,m,\omega}^{\mF}(u+1) =
    \sum_{(G,P) \in \mSG_{n,m}^{[\mPatch{\mF}(u)]}} \omega(G) u^{|P|} =
    \sg_{n,m,\omega}^{[\Patch{\mF}(u)]}.
\]
Replacing $u$ with $u-1$, extracting the coefficient $[u^t]$,
and injecting the result in Equation~\eqref{eq:number_of_subgraphs_general_weights}
finishes the proof.
\end{proof}

	\subsection{Limit law of the number of subgraphs}

Let $c$ denote a real value or a polynomial in the variable $u$, and $F$ a connected (multi)graph 
(in this section $c$ will denote the weight of~$F$).
For simplification we will deviate from standard notations and use 
$$e^{c F}:=\mathtt{Set}(c \{F\})$$ 
to denote the set of (multi)graphs where each component is isomorphic to $F$.
The weight of such a (multi)graph is then defined as $c^k$,
where $k$ is the number of components.
Similarly, given a (multi)graph $H$,
we write  
\begin{equation}
\label{eq:def_exp}
H e^{c F}:=\{H\}\ast\mathtt{Set}(c\{F\})
\end{equation}
to denote the set of (multi)graphs
where one distinguished connected component is isomorphic to $H$,
while all the others are isomorphic to $F$.
The weight of a (multi)graph from this family is defined
as $c^k$, where $k$ is the number of components isomorphic to $F$.
By convention, when $H$ is the empty (multi)graph (\ie the graph that has no vertex and no edge),
then $H e^{c F}$ is equal to $e^{c F}$.

\begin{lemma} \label{th:general_weights_first_asymptotics}
Let $F$ be a connected multigraph, and $\pair_F$ denote the (finite) set of multigraphs
obtained by merging two distinct isomorphic copies of $F$
sharing at least one vertex.
Consider a sequence $m := m(n)$ and recall that $\mg_{n,m,\omega}^{[\mG]}$ denotes the cumulative weight of all $(n,m,\omega)$-multigraphs with a distinguished $\mG$-subgraph. 
Assume that the following asymptotic relations hold, as $n$ tends to infinity: 
\begin{itemize}
\item (Case $c=2$)
For all $H\in\pair_F$, we have $\mg_{n,m,\omega}^{[H e^{2 F}]}=\smallo(\mg_{n,m,\omega}^{[F]})$. 
\item (Case $c=u-1$)
For $t$ fixed, $[u^t] \mg_{n,m,\omega}^{[e^{(u-1) F}]}=\exactbigO(\mg_{n,m,\omega}^{[F]})$.
\end{itemize}
Then the weight of all $(n,m,\omega)$-multigraphs
that contain exactly $t$ $F$-subgraphs satisfies
\[
	\mg_{n,m,\omega,t}^F \sim 
    [u^t] \mg_{n,m,\omega}^{[e^{(u-1) F}]}.
\]
\end{lemma}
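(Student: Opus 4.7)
The plan is to apply Proposition~\ref{th:number_of_subgraphs_general_weights}, which gives
\[
\mg_{n,m,\omega,t}^{F} \;=\; [u^t]\,\mg_{n,m,\omega}^{[\mPatch{F}(u-1)]},
\]
and then split $F$-patchworks according to their connected-component structure. Since $F$ is connected, each connected component of an $F$-patchwork is either a single copy of $F$ or a connected patchwork of at least two pieces; letting $\mathcal{C}_{\ge 2}(u-1)$ denote the class of the latter, weighted by $(u-1)^k$ per $k$-piece patchwork, one obtains the combinatorial identity
\[
\mPatch{F}(u-1) \;=\; e^{(u-1)F}\cdot\mathtt{Set}\bigl(\mathcal{C}_{\ge 2}(u-1)\bigr) \;=\; e^{(u-1)F}+\mathcal{R}(u-1),
\]
where $\mathcal{R}(u-1)=e^{(u-1)F}\bigl(\mathtt{Set}(\mathcal{C}_{\ge 2}(u-1))-1\bigr)$. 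The plan then reduces to showing $[u^t]\,\mg_{n,m,\omega}^{[\mathcal{R}(u-1)]}=\smallo(\mg_{n,m,\omega}^{[F]})$, because hypothesis (Case $c=u-1$) would then turn this estimate into $\smallo\bigl([u^t]\mg_{n,m,\omega}^{[e^{(u-1)F}]}\bigr)$, which is exactly the claim.

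The key technical ingredient I would use is the following elementary inequality: if $A(c)=\sum_{k\ge 0}a_k c^k$ is a polynomial with non-negative coefficients and $t,j\ge 0$ are fixed, then
\[
\bigl|[u^t]\,(u-1)^{j}\,A(u-1)\bigr| \;\le\; C_{t,j}\,A(2),
\]
since $|[u^t](u-1)^{k+j}|=\binom{k+j}{t}$ grows only polynomially in $k$ while $2^k$ grows exponentially, so that $\binom{k+j}{t}=O(2^k)$ uniformly. Expanding $\mathtt{Set}(\mathcal{C}_{\ge 2}(u-1))-1=\mathcal{C}_{\ge 2}(u-1)+\frac12\mathcal{C}_{\ge 2}(u-1)^2+\cdots$ and grouping by the underlying ``kernel'' multigraph $K$ made of finitely many non-singleton connected components, each term of $\mathcal{R}(u-1)$ can be written as $(u-1)^{j_K}\,\mg_{n,m,\omega}^{[K\cdot e^{(u-1)F}]}$ with $j_K\ge 2$. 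Applying the inequality to $A(c)=\mg_{n,m,\omega}^{[K\cdot e^{cF}]}$ (whose coefficients in $c$ are non-negative) yields an upper bound of the form $O\bigl(\mg_{n,m,\omega}^{[K\,e^{2F}]}\bigr)$.

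It then remains to verify that $\mg_{n,m,\omega}^{[K\,e^{2F}]}=\smallo(\mg_{n,m,\omega}^{[F]})$ for every kernel $K$ occurring in the expansion. When $K\in\pair_F$, this is precisely hypothesis (Case $c=2$). When $K$ is built from $k\ge 3$ overlapping pieces or out of several non-singleton components, the observation is that any two overlapping pieces merge into some $H\in\pair_F$ sitting inside $K$; the extra pieces of $K$ are additional $F$-copies decorating $H$ and, since their overlap pattern depends only on the isomorphism type of $K$, can be absorbed at the cost of a combinatorial constant $C_K$ depending only on $F$. This reduces $\mg_{n,m,\omega}^{[K\,e^{2F}]}$ to $C_K\,\mg_{n,m,\omega}^{[H\,e^{2F}]}$ for some $H\in\pair_F$, and summing over the finitely many kernel types of bounded size yields the required $\smallo(\mg_{n,m,\omega}^{[F]})$.

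The main obstacle I anticipate lies precisely in this last reduction: turning the qualitative statement ``$K$ contains a copy of some $H\in\pair_F$'' into a uniform upper bound $\mg_{n,m,\omega}^{[K\,e^{2F}]}=O\bigl(\mg_{n,m,\omega}^{[H\,e^{2F}]}\bigr)$ requires an explicit injection of the configurations that respects both the internal overlap pattern of $K$ and the vertex disjointness of the $e^{2F}$ factor, and one must check that the resulting overcount is bounded independently of $n$. Fixing a canonical choice of two overlapping pieces inside $K$ (hence of $H$) and treating the remaining pieces as a bounded, $F$-dependent extension pattern should give a multiplicity bounded by a constant depending only on the isomorphism type of $K$, at which point the $\pair_F$ hypothesis controls everything simultaneously.
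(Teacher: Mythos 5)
Your skeleton (start from Proposition~\ref{th:number_of_subgraphs_general_weights}, isolate the $e^{(u-1)F}$ contribution, and kill the remainder by the coefficient bound $|[u^t](u-1)^jA(u-1)|\le C_{t,j}\,A(2)$ together with the $\pair_F$ hypothesis) is the same in spirit as the paper's, and that coefficient inequality is exactly the trick used there. The gap is in your remainder analysis. The factorization $\mPatch{\mF}(u-1)=e^{(u-1)F}\cdot\mathtt{Set}\bigl(\mathcal C_{\ge2}(u-1)\bigr)$ forces you to sum over all ``kernels'' $K$, i.e.\ over all $F$-patchworks each of whose components has at least two pieces. This family is \emph{not} finite and its members are \emph{not} of bounded size: a connected $F$-patchwork may chain together arbitrarily many pairwise overlapping copies of $F$ (already for $F$ a single edge, a path with $k$ edges is a connected $F$-patchwork with $k$ pieces), and inside an $(n,m)$-multigraph such components can have a number of pieces growing with $n$. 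So ``summing over the finitely many kernel types of bounded size'' is not available. The problem compounds: your constants $C_{t,j_K}$ necessarily grow with the number of pieces $j_K$ (at $k=0$ one needs $C_{t,j}\ge\binom{j}{t}$), and the reduction $\mg_{n,m,\omega}^{[K e^{2F}]}=\bigO\bigl(\mg_{n,m,\omega}^{[He^{2F}]}\bigr)$ for $K$ strictly larger than an element of $\pair_F$ is not supplied by the hypotheses — the number of ways to extend a fixed $H$-copy to a $K$-copy inside $G$ need not be bounded independently of $n$, which is the obstacle you yourself flag, but here it must be overcome uniformly over an unbounded family of $K$'s. A crude alternative bound $|[u^t]\mg_{n,m,\omega}^{[\mathcal R(u-1)]}|\le\mg_{n,m,\omega}^{[\mathcal R(2)]}$ does not help either: over \emph{all} (non-disjointed) patchworks the weight $2^{|P|}$ can be exponentially large in the number of $F$-copies of $G$.

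The paper sidesteps this by never expanding the full patchwork series. It partitions the \emph{multigraphs} into $\disjointmMG^F$ (all $F$-subgraphs pairwise disjoint) and its complement $\jointmMG^F$; on $\disjointmMG^F$ every patchwork is disjointed, so inclusion--exclusion there gives exactly $[u^t]\disjointmg_{n,m,\omega}^{[\disjointpatch{F}(u-1)]}$, and the three error terms are all bounded by quantities of the form $\jointmg_{n,m,\omega}^{[e^{2F}]}$ involving only \emph{disjointed} patchworks. Finiteness then comes for free: a multigraph of $\jointmMG^F$ contains some $H\in\pair_F$ with at most $2n(F)-1$ vertices, and the pieces of a disjointed patchwork that meet $H$ are mutually disjoint and each uses a vertex of $H$, so there are at most $2n(F)-1$ of them — this is what makes the family $\mK$ finite. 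To rescue your component decomposition you would need an additional hypothesis controlling $\mg_{n,m,\omega}^{[Ke^{2F}]}$ uniformly over arbitrarily large connected patchworks $K$, which the lemma does not assume; I recommend reorganizing the argument around disjointed patchworks as above.
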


\begin{proof}
Let $\disjointmMG^{F}$ denote the set of multigraphs $G$
such that the $F$-subgraphs of $G$ are disjoint (\ie share no vertex),
and $\jointmMG^{F}$ the complementary set.
As usual, let $\disjointmg_{n,m,\omega,t}^F$ (\resp $\jointmg_{n,m,\omega,t}^F$)
denote the weight of the $(n,m,\omega)$-multigraphs from $\disjointmMG^F$ (\resp $\jointmMG^F$)
with exactly $t$ $F$-subgraphs.
Then their sum is equal to the weight of $\mMG_{n,m,\omega,t}^F$
\[
    \mg_{n,m,\omega,t}^F =
    \disjointmg_{n,m,\omega,t}^F +
    \jointmg_{n,m,\omega,t}^F.
\]
Applying the same inclusion-exclusion principle
as in the proof of Theorem~\ref{th:number_of_subgraphs_general_weights},
we obtain
\[
    \disjointmg^{F}_{n,m,\omega,t} =
    [u^t] \disjointmg_{n,m,\omega}^{[\disjointpatch{F}(u-1)]}.
\]
The right-hand side can be decomposed using the relation
\[
    \mg_{n,m,\omega}^{[\disjointpatch{F}(u-1)]} =
    \disjointmg_{n,m,\omega}^{[\disjointpatch{F}(u-1)]} +
    \jointmg_{n,m,\omega}^{[\disjointpatch{F}(u-1)]}.
\]
Combining the last three equations, we obtain
\begin{equation} \label{eq:strict_balanced_mgnmt}
    \mg_{n,m,\omega,t}^F =
    [u^t] \mg_{n,m,\omega}^{[\disjointpatch{F}(u-1)]} -
    [u^t] \jointmg_{n,m,\omega}^{[\disjointpatch{F}(u-1)]} +
    \jointmg_{n,m,\omega,t}^F.
\end{equation}
We now consider the right-hand side,
express the first term,
and prove that the third term
and the absolute value of second term are negligible.
\proofparagraph{First term.}
Recall that a disjointed $F$-patchwork is a set of mutually disjoint multigraphs which are isomorphic to $F$, so
\[
	[u^t] \mg_{n,m,\omega}^{[\disjointpatch{F}(u-1)]} = [u^t] \mg_{n,m,\omega}^{[e^{(u-1) F}]},
\]
which is of order  $\exactbigO(\mg^{[F]}_{n,m,\omega})$.
\proofparagraph{Third term.}
The $(n,m)$-multigraphs with $t$ $F$-subgraphs form a subset of all $(n,m)$-multigraphs, so
\[
	\jointmg_{n,m,\omega,t}^F \leq \jointmg_{n,m,\omega}.
\]
Any multigraph with a distinguished $\pair_F$-subgraph
is in $\jointmMG^F$, so
\[
	\jointmg_{n,m,\omega} \leq \mg_{n,m,\omega}^{[\pair_F]} = \sum_{H \in \pair_F} \mg_{n,m,\omega}^{[H]}.
\]
Since $\mg_{n,m,\omega}^{[H e^{2 F}]}=\smallo(\mg^{[F]}_{n,m,\omega})$, so is $\mg_{n,m,\omega}^{[H]}$, and so
the third term of the right-hand side of Equation~\eqref{eq:strict_balanced_mgnmt} is negligible compared to the first one.
\proofparagraph{Second term.}
The absolute value of the second term is bounded by
\begin{align*}
    \left|
    [u^t] \jointmg_{n,m,\omega}^{[\disjointpatch{F}(u-1)]}
    \right|
    &\leq
    [u^t] \jointmg_{n,m,\omega}^{[e^{(u+1)F}]} \\
    &\leq
    \sum_{t\ge 0} [u^t] \jointmg_{n,m,\omega}^{[e^{(u+1)F}]} 1^t 
    =
    \jointmg_{n,m,\omega}^{[e^{2 F}]},
\end{align*}
which is the weight of the set of all multigraphs from $\jointmMG_{n,m}$
where a disjointed patchwork $P$ is distinguished,
and counted with a weight $2^{|P|}$.
Let $\mK$ be the finite set of graphs composed of a multigraph $H$ from $\pair_F$
and a (possibly empty) set of disjoint copies of $F$, each sharing at least one vertex with $H$. $\pair_F$ being finite, and each element having at most $2 V(F) -1$ vertices, the set $\mK$ is also finite.
There exists at least one element of $\mK$ in each graph from $\jointmMG$, thus the following holds:
\[
	\jointmg_{n,m,\omega}^{[e^{2 F}]} \leq \sum_{K \in \mK} \mg_{n,m,\omega}^{[K e^{2 F}]}.
\]
Since the sum is finite and each terms is of order $\smallo(\mg^{[F]}_{n,m,\omega})$,
the second term of the right-hand side of Equation~\eqref{eq:strict_balanced_mgnmt} is also negligible,
which concludes the proof.
\end{proof}

The following proposition provides simple conditions
on the asymptotic weight of multigraphs with a distinguished subgraph
to conclude that the number of $F$-subgraphs has a Poisson limit 
when $F$ is strictly balanced.

\begin{proposition}[Poisson distribution, strictly balanced, multigraphs] 
\label{th:general_weights_second_asymptotics}
Let $m := m(n)$ denote an integer sequence going to infinity with $n$,
$F$ be a strictly balanced multigraph,
and $\omega$ a weight function.
Let us assume that there exists a sequence of functions $\lambda_n:\mMG\to\mathbb R$
satisfying the following assertions: 
\begin{itemize}
\item
The limit $\lambda(F) := \lim_{n\to\infty}\lambda_n(F)$ exists and is positive.
\item
For any constant $c$ and any multigraph $H$, we have
\[
	\mg_{n,m,\omega}^{[H e^{c F}]} \sim 
    \lambda_n(H) e^{c \lambda_n(F)} \mg_{n,m,\omega}, 
\]
where we used the notation introduced in~\eqref{eq:def_exp}.
\item
If $d(G)>d(F)$ then $\lambda(G)=0$.
\end{itemize}
Then the number of $F$-subgraphs in a random $(n,m,\omega)$-multigraph
follows a Poisson limit law with parameter $\lambda(F)$.
\end{proposition}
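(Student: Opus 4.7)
The plan is to prove, for each fixed $t\ge 0$, the pointwise convergence
\[
\proba(G[F]=t)\longrightarrow\frac{e^{-\lambda(F)}\lambda(F)^t}{t!},
\]
which characterises convergence in distribution to the Poisson law of parameter $\lambda(F)$. The starting identity is Proposition~\ref{th:number_of_subgraphs_general_weights}, namely $\mg_{n,m,\omega,t}^F=[u^t]\mg_{n,m,\omega}^{[\mPatch{F}(u-1)]}$, and the strategy is to first peel off the non-disjointed patchworks using Lemma~\ref{th:general_weights_first_asymptotics}, and then to read off the Poisson form from the second assumption.

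First I would verify the two hypotheses of Lemma~\ref{th:general_weights_first_asymptotics}. Since $F$ is strictly balanced, Lemma~\ref{th:density_of_pairs} gives $d(H)>d(F)$ for every $H\in\pair_F$; the third assumption of the proposition then forces $\lambda(H)=0$. The second assumption (together with $\mg_{n,m,\omega}^{[F]}\sim\lambda_n(F)\mg_{n,m,\omega}$ obtained by taking $c=0$ with $H=F$) produces
\[
\mg_{n,m,\omega}^{[He^{2F}]}\sim\lambda_n(H)e^{2\lambda_n(F)}\mg_{n,m,\omega}=\smallo\bigl(\mg_{n,m,\omega}^{[F]}\bigr),
\]
since $\lambda_n(F)\to\lambda(F)>0$. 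The $\exactbigO$-assumption on $[u^t]\mg_{n,m,\omega}^{[e^{(u-1)F}]}$ will be justified once we compute that coefficient below.

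Next I would expand $\mg_{n,m,\omega}^{[e^{(u-1)F}]}=\sum_{k\ge 0}(u-1)^k\mg_{n,m,\omega}^{[kF]}$, where $kF$ denotes the multigraph composed of $k$ disjoint copies of $F$. Taking $c=0$ with $H=\emptyset$ in the second assumption forces $\lambda_n(\emptyset)\to 1$. Applying the same assumption with $H=(r-1)F$ and extracting the coefficient of $c^1$ (the left-hand side gives $\mg_{n,m,\omega}^{[(r-1)F\cdot F]}=r\,\mg_{n,m,\omega}^{[rF]}$, since there are $r$ ways of choosing the distinguished copy among the $r$ disjoint copies of $F$) yields the recursion $r\,\mg_{n,m,\omega}^{[rF]}\sim\lambda_n((r-1)F)\lambda_n(F)\mg_{n,m,\omega}$. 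Combined with $\mg_{n,m,\omega}^{[rF]}\sim\lambda_n(rF)\mg_{n,m,\omega}$, this gives by induction $\lambda_n(rF)\sim\lambda_n(F)^r/r!$ and hence $\mg_{n,m,\omega}^{[kF]}\sim(\lambda_n(F)^k/k!)\mg_{n,m,\omega}$. Resumming the resulting alternating series,
\[
[u^t]\mg_{n,m,\omega}^{[e^{(u-1)F}]}\sim\mg_{n,m,\omega}\sum_{k\ge t}\binom{k}{t}\frac{(-1)^{k-t}\lambda_n(F)^k}{k!}=\frac{e^{-\lambda_n(F)}\lambda_n(F)^t}{t!}\mg_{n,m,\omega},
\]
which is of order $\exactbigO(\mg_{n,m,\omega}^{[F]})$ and closes the last hypothesis of the lemma. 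Together with the conclusion $\mg_{n,m,\omega,t}^F\sim[u^t]\mg_{n,m,\omega}^{[e^{(u-1)F}]}$, division by $\mg_{n,m,\omega}$ gives the announced Poisson mass for every $t$.

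The main obstacle is the extraction of the $c^1$-coefficient in the step above: the second assumption is phrased at fixed constant $c$, while the polynomial $\mg_{n,m,\omega}^{[(r-1)F\cdot e^{cF}]}=\sum_k c^k\mg_{n,m,\omega}^{[(r-1)F\cdot kF]}$ has degree of order $n$, so a single pointwise-in-$c$ asymptotic does not a priori determine an individual coefficient. Making this step rigorous requires applying the second assumption at several values of $c$ and solving the resulting finite-dimensional system for the low-order coefficients, and bounding the high-order tail using the strict balance of $F$ together with Lemma~\ref{th:density_of_pairs} and the third assumption, which force non-disjoint overlaps among the $F$-copies to contribute only negligibly. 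Once this bookkeeping is carried out, the remainder of the argument is a routine collection of asymptotics, coefficient extraction and the continuity theorem for probability generating functions.
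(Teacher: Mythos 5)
Your overall architecture is the paper's: verify the two hypotheses of Lemma~\ref{th:general_weights_first_asymptotics} (which you do correctly, via Lemma~\ref{th:density_of_pairs} and the third assumption), and then establish
\[
[u^t]\,\mg_{n,m,\omega}^{[e^{(u-1)F}]}\sim\frac{\lambda_n(F)^t}{t!}e^{-\lambda_n(F)}\,\mg_{n,m,\omega}.
\]
But your route to this last display has a genuine gap, which you partly acknowledge and do not close. Two distinct problems arise. First, the derivation of $\mg_{n,m,\omega}^{[kF]}\sim(\lambda_n(F)^k/k!)\,\mg_{n,m,\omega}$ by extracting the $c^1$-coefficient from an asymptotic that holds only at each fixed $c$ is not legitimate: the polynomial in $c$ has degree growing like $n$, so no fixed finite collection of evaluation points determines an individual coefficient, and the ``tail bound via strict balance'' you propose is off target --- Lemma~\ref{th:density_of_pairs} and the density hypothesis control \emph{overlapping} copies, whereas the large-$k$ terms $\mg_{n,m,\omega}^{[kF]}$ are built from \emph{disjoint} copies and are not small for that reason. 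Second, even granting the per-$k$ asymptotics, your ``resumming the alternating series'' interchanges a limit with a sum of $\Theta(n)$ signed terms, each known only up to a relative $\smallo(1)$ error with no uniformity in $k$; the cancellations in $\sum_{k\ge t}\binom{k}{t}(-1)^{k-t}\mg_{n,m,\omega}^{[kF]}$ mean these errors could a priori swamp the answer.

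The paper closes exactly this gap with one complex-analytic step that you are missing. Set $f_n(u)=\mg_{n,m,\omega}^{[e^{(u-1)F}]}/\mg_{n,m,\omega}$. The second hypothesis (applied with constant $c=u-1$) gives $f_n(u)\to e^{(u-1)\lambda(F)}$ pointwise on the unit disk, and monotonicity of the weights gives $|f_n(u)|\le\mg_{n,m,\omega}^{[e^{2F}]}/\mg_{n,m,\omega}$, which is uniformly bounded since it converges. Vitali's theorem then upgrades pointwise convergence of this uniformly bounded sequence of analytic functions to uniform convergence near $0$, hence convergence of every Taylor coefficient: $[u^t]f_n(u)\to e^{-\lambda(F)}\lambda(F)^t/t!$. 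This single argument replaces both your coefficient-extraction scheme and your term-by-term resummation, and is the ingredient your proposal needs to become a proof.
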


\begin{proof}
Since $F$ is connected and strictly balanced,
according to Lemma~\ref{th:density_of_pairs},
any multigraph $H$ in $\pair_F$ is denser than $F$.
The third hypothesis of the proposition implies
that $\lambda_n(H)$ tends to $0$, so
\[
	\mg_{n,m,\omega}^{[H e^{2 F}]} \sim 
    \lambda_n(H) e^{2 \lambda_n(F)} \mg_{n,m,\omega} =
    \smallo(\mg_{n,m,\omega}).
\]
Thus, the first hypothesis of Lemma~\ref{th:general_weights_first_asymptotics} is satisfied.
In the last part of the proof, we will prove
\begin{equation} \label{eq:second_part_general_weights_second_asymptotics}
	[u^t] \mg_{n,m,\omega}^{[e^{(u-1) F}]} \sim 
    \frac{\lambda_n(F)^t}{t!}
    e^{-\lambda_n(F)}
    \mg_{n,m,\omega}.
\end{equation}
This implies
\[
	[u^t] \mg_{n,m,\omega}^{[e^{(u-1) F}]} = \exactbigO(\mg_{n,m,\omega}),
\]
so the second hypothesis of Lemma~\ref{th:general_weights_first_asymptotics} is satisfied.
Applying this lemma, we then obtain
\[
	\mg_{n,m,\omega,t}^F \sim 
    [u^t] \mg_{n,m,\omega}^{[e^{(u-1) F}]} \sim
    \frac{\lambda_n(F)^t}{t!}
    e^{-\lambda_n(F)}
    \mg_{n,m,\omega},
\]
so the number of $F$-subgraphs in a random $(n,m,\omega)$-multigraph
has a Poisson limit law of parameter~$\lambda(F)$.
\proofparagraph{Proof of Equation~\eqref{eq:second_part_general_weights_second_asymptotics}.}
We follow the proof of \cite[Theorem IX.1]{FS09}.
For any $u$ in the unit disk, the function sequence
\[
	f_n(u) = \frac{\mg_{n,m,\omega}^{[e^{(u-1) F}]}}{\mg_{n,m,\omega}}
\]
tends to $e^{(u-1) \, \lambda(F)}$,
and
\[
	|f_n(u)| \leq
    \frac{\mg_{n,m,\omega}^{[e^{(|u-1|) F}]}}{\mg_{n,m,\omega}} \leq
    \frac{\mg_{n,m,\omega}^{[e^{2 F}]}}{\mg_{n,m,\omega}}.
\]
Since the right-hand side has a finite limit, it is bounded,
so $|f_n(u)|$ is uniformly bounded on the unit disk.
Then, according to Vitali's Theorem, the sequence of functions $f_n(u)$
converges uniformly in a neighborhood of $0$ to $e^{(u-1) \, \lambda(F)}$,
which implies Equation~\eqref{eq:second_part_general_weights_second_asymptotics}.
\end{proof}


		\section{Generating functions for graphs and multigraphs}
		\label{sec:fgs-graphs-multigraphs}
    

		\subsection{Analytic combinatorics}

We recall here briefly the technique of translating combinatorial operations into equations for generating functions.

    \paragraph{Symbolic Method.}

The book of Flajolet and Sedgewick \cite{FS09} provides an excellent introduction to the techniques of analytic combinatorics.
The main idea is to associate to any combinatorial family $\mA$ of canonically labeled objects a generating function
\[
    A(z) = \sum_{n \geq 0} a_n \frac{z^n}{n!},
\]
where $a_n$ denotes the number of objects of size $n$ in $\mA$.
In the present article, to express the generating function of interesting combinatorial families, we apply the following dictionary to translate combinatorial relations between the families into analytic equations on their generating functions.
\begin{itemize}
\item
\textbf{Disjoint union.}
If $\mA \cap \mB = \emptyset$, and $\mC = \mA \cup \mB$,
then $C(z) = A(z) + B(z)$.
\item
\textbf{Relabeled Cartesian product.}
In the relabeled Cartesian product $\mC = \mA \times \mB$,
we consider all relabelings of the pairs $(a,b)$, with $a \in \mA$ and $b \in \mB$,
so that each label, from $1$ to the sum of the sizes of $a$ and $b$, appears exactly once, \emph{i.e.}, $(a,b)$ is canonically labelled.
We then have
\[
    C(z) = A(z) B(z).
\]
\item
\textbf{Set.}
The family of sets of objects from $\mA$ (where the elements of a set are relabeled such that the set is canonically labeled after all) has generating function $e^{A(z)}$.
\end{itemize}

		\subsection{Multigraphs.}
        \label{sec:def-multigraphs}

Since vertices and edges are labeled, 
we choose exponential generating functions
with respect to both quantities
(see Bergeron, Labelle and Leroux~\cite{BLL97} or again~\cite{FS09}).
Furthermore, a weight $1/2$ is assigned to each edge to take into account the orientation.
The generating function of a multigraph family $\mF$ is then
\[
  F(z,w) =
  \sum_{G \in \mF}
  \frac{w^{m(G)}}{2^{m(G)} m(G)!}
  \frac{z^{n(G)}}{n(G)!},
\]
and the number of multigraphs with $n$ vertices and $m$ edges
in the family $\mF$ is
\[
    F_{n,m} = n! 2^m m! [z^n w^m] F(z,w).
\]
For example, the generating function
of the set $\mMG$ of all multigraphs is
\[
    \mg(z,w) = \sum_{n \geq 0} e^{w n^2/2} \frac{z^n}{n!}.
\]

    \paragraph{Weighted multigraphs.}

A \emph{weighted} multigraph family is a family $\mF_{\omega}$
where each multigraph $G$ comes with a weight $\omega(G)$.
For example, a multigraph family is a particular case of a weighted family,
where each multigraph has weight $1$.
The weight of the weighted family $\mF$ is
\[
    F_{\omega} = \sum_{G \in \mF} \omega(G),
\]
and its generating function is
\[
    F_{\omega}(z,w) =
    \sum_{G \in \mF}
    \omega(G)
    \frac{w^{m(G)}}{2^{m(G)} m(G)!}
    \frac{z^{n(G)}}{n(G)!}.
\]
The weight of $\mF_{n,m,\omega}$, the set of multigraphs from $\mF_{\omega}$ with $n$ vertices and $m$ edges, is then
\[
    F_{n,m,\omega} = n! 2^m m! [z^n w^m] F_{\omega}(z,w).
\]

    \paragraph{Subgraphs.}

In this article, we consider a uniform random multigraph $G$ in $\mMG_{n,m}$, where $n$ and $m$ are large integers, and aim at deriving the limit law for the number $G[\mF]$  of $\mF$-subgraphs for any finite family $\mF$.
The generating function of all multigraphs where the number of $\mF$-subgraphs is marked (by the variable~$u$) is denoted by
\[
    \mg^\mF(z,w,u) =
    \sum_{G \in \mMG}
    u^{G[\mF]}
    \frac{w^{m(G)}}{2^{m(G)} m(G)!}
    \frac{z^{n(G)}}{n(G)!}.
\]

Therefore, the number of multigraphs
with $n$ vertices, $m$ edges,
and that contain exactly $t$ $\mF$-multigraphs is
\[
    \mg_{n,m,t}^\mF =
    n! 2^m m! [z^n w^m u^t] \mg^\mF(z,w,u).
\]
Notice that this formula corresponds to weighted multigraphs where each graph $G\in\mMG$ has weight $\omega(G) = u^{G[\mF]}$.

\paragraph{Multivariate vs. weighted series.} The generating series $\mg^\mF(z,w,u)$, in three variables $z$, $w$ and $u$, can also be seen as a bivariate series in $z$ and $w$, for the weighted multigraph family where a graph $G\in \mMG$ has weight $\omega(G)= u^{G[\mF]}$.

In the following, we will sometimes exploit this equivalence between multivariate generating series and weighted generating series.

        \subsection{From multigraphs to graphs} 

Most of our results are first derived for multigraphs, because this model is better suited for generating function manipulations.
However, the most common model in the graph and combinatorics communities is the simple graph model.

In a multigraph, a \emph{loop} is an edge linking a vertex to itself.
A \emph{multiple edge} is a set of edges between the same two distinct vertices.
A \emph{graph} is a multigraph where the edges are neither labeled nor oriented and loops as well as multiple edges are forbidden.
Thus, a simple graph on $n$ vertices contains at most $\binom{n}{2}$ edges.
Since the edges are non-labeled and non-oriented, we define the generating function of a simple graph family $\mF$ as
\[
    F(z,w) =
    \sum_{G \in \mF}
    w^{m(G)}
    \frac{z^{n(G)}}{n(G)!}.
\]
For example, the generating function of all simple graphs is
\[
  \sg(z,w) = 
  \sum_{n \geq 0}
  (1+w)^{\binom{n}{2}}
  \frac{z^n}{n!}.
\]

\begin{lemma} \label{th:multigraphs_to_graphs}
Consider a multigraph family $\mF$ which is 
stable with respect to edge relabeling and change of orientation
and containing neither loops nor multiple edges.
We build the simple graph family $\mH$ from $\mF$
by removing the edge labels and orientations.
Then, the generating functions of $\mF$ and $\mH$ are equal:
\[
    F(z,w)
    =
    \sum_{G \in \mF}
    \frac{w^{m(G)}}{2^{m(G)} m(G)!}
    \frac{z^{n(G)}}{n(G)!}
    =
    \sum_{G \in \mH}
    w^{m(G)}
    \frac{z^{n(G)}}{n(G)!}
    =
    H(z,w).
\]
\end{lemma}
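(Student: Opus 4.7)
The plan is to prove the identity $F(z,w) = H(z,w)$ by regrouping the sum over $\mF$ according to the fiber of the forgetful map $\phi : \mF \to \mH$ which erases edge labels and orientations. The whole argument rests on counting exactly how many elements of $\mF$ project to a given simple graph, and this is where the two hypotheses (no loops, no multiple edges) and the stability assumption come into play.

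First I would establish that $\phi$ is well-defined and surjective: for any $H \in \mH$ with $m$ edges we can assign labels $1, \ldots, m$ to the edges of $H$ in some order and pick any orientation for each, producing a multigraph $G$ with $\phi(G)=H$; the stability of $\mF$ under edge relabeling and change of orientation then guarantees $G \in \mF$. Next I would count the fiber $\phi^{-1}(H)$. Since $G \in \mF$ contains no loop, every edge of $G$ admits exactly two distinct oriented triples $(v,w,e)$ and $(w,v,e)$, contributing a factor $2^m$. Since $G$ contains no multiple edge, the underlying unordered pairs of endpoints of the edges of $G$ are pairwise distinct, so any permutation of the edge labels $1,\ldots,m$ produces a genuinely different element of $\mF$, contributing a factor $m!$. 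Together, every $H \in \mH$ with $m$ edges has exactly $2^m m!$ preimages under $\phi$, all of which lie in $\mF$ by stability.

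The remainder is a routine regrouping of the sum. Observing that $n(G)=n(\phi(G))$ and $m(G)=m(\phi(G))$ (the latter uses the absence of multiple edges), I write
\[
F(z,w) = \sum_{H \in \mH} \sum_{G \in \phi^{-1}(H)} \frac{w^{m(G)}}{2^{m(G)} m(G)!} \frac{z^{n(G)}}{n(G)!}
= \sum_{H \in \mH} 2^{m(H)} m(H)! \cdot \frac{w^{m(H)}}{2^{m(H)} m(H)!} \frac{z^{n(H)}}{n(H)!},
\]
in which the combinatorial factor $2^{m(H)} m(H)!$ exactly cancels the normalisation $1/(2^{m(H)} m(H)!)$, leaving $\sum_{H \in \mH} w^{m(H)} z^{n(H)}/n(H)!$, which is $H(z,w)$ by definition.

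The only subtle point, and the one I would write out most carefully, is the fiber count: verifying that no coincidence reduces the $2^m m!$ preimages. The no-loops hypothesis is used to ensure the orientation factor is truly $2$ per edge (a loop $(v,v,e)$ has only one admissible triple, so its presence would divide the count by $2$); the no-multiple-edges hypothesis guarantees that permuting labels amongst two parallel edges cannot produce the same multigraph, so the $m!$ factor is exact. Once this bijective counting is in hand, the equality of the generating functions is immediate.
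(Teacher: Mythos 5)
Your proof is correct and follows essentially the same route as the paper: the paper's (much terser) argument is exactly the observation that each simple graph with $m$ edges has $2^m m!$ preimages under the label/orientation-forgetting map, which cancels the normalisation $1/(2^m m!)$ in the multigraph generating function. Your more careful justification of the fiber count — in particular pinpointing that loops would break the factor $2$ per edge and that parallel edges would break the factor $m!$ — is a welcome elaboration of what the paper leaves implicit.
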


\begin{proof}
There exist $2^{m(G)} m(G)!$ possible orientations and labelings for the edges of any simple graph $G$.
The multigraphs obtained contain neither loops nor multiple edges.
Conversely, any multigraph family being stable with respect to edge relabeling and change of orientation and containing 
neither loops nor multiple edges corresponds to a unique graph family.
\end{proof}

Let $\mF$ denote a simple graph family, and $\mH$ the corresponding multigraph family, obtained by labeling and orienting the edges in all possible ways.
In light of the previous lemma, counting graphs from $\mSG_{n,m}$ with $t$ copies from $\mF$
is equivalent to counting multigraphs from $\mMG_{n,m}$ with no loops and no multiple edges and containing $t$ copies from $\mH$.
Removing the loops and multiple edges can be realized by forbidding subgraphs from the multigraph family that consists of the following two elements: the multigraph being a single loop and the multigraph which forms a double edge. There are actually four multigraphs comprising solely a double edge (by relabeling and re-orienting the edges), but as the notion of subgraph is independent of the labeling, only one is needed.

 		\subsection{Patchworks}
        \label{sec:patchworks}

A patchwork $P$ gives rise to a (multi)graph $G(P):=(V(P), E(P))$ (\emph{cf.} \eqref{PW-graph}). However, as seen in Figure~\ref{fig:patchwork},
this underlying (multi)graph is not enough to characterize a patchwork.
Some information is lost, e.g., the number of pieces in the patchwork which turns out to be useful for enumeration purposes.

As was already the case for (multi)graphs, a patchwork $P$ has a canonical labeling if its (multi)graph representation $G(P)$ has a canonical labeling. The set of all $\mF$-patchworks with canonical labeling is denoted by $\Patch\mF$, and its generating function is
\begin{align*}
	\Patch{\mF}(z,w,u) &=
    \sum_{P \in \Patch{\mF}}
    u^{|P|}
    \frac{w^{m(P)}}{2^{m(P)} m(P)!}
    \frac{z^{n(P)}}{n(P)!} \qquad \textrm{(for multigraphs)}; \\
    \Patch{\mF}(z,w,u) &=
    \sum_{P \in \Patch{\mF}}
    u^{|P|}
    w^{m(P)}
    \frac{z^{n(P)}}{n(P)!} \qquad \textrm{(for simple graphs)}.
\end{align*}

\begin{proposition}[Generating function, patchworks] 
\label{th:patchwork}
Given a (multi)graph family $\mF$, closed by isomorphism and with generating function $F(z,w)$,
the generating function of disjointed patchworks is
\[
    \disjointpatch{\mF}(z,w,u) = e^{u F(z,w)}.
\]
The generating function of all patchworks is
\[
    \Patch{\mF}(z,w,u) = e^{u F(z,w)} (1 + Q^\mF(z,w,u)),
\]
where $Q^\mF(z,w,u)$ denotes the generating function of patchworks that contain no isolated piece,
\ie each piece shares at least one vertex with another piece.
\end{proposition}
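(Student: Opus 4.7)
The plan is to apply the labeled symbolic method to decompose any patchwork into its pieces, and then to decompose the patchwork further into maximal vertex-sharing groups.

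For the first formula, I would observe that a disjointed $\mF$-patchwork is exactly a labeled set of pairwise vertex-disjoint (hence also edge-disjoint) $\mF$-pieces. Marking each piece by $u$ and applying the $\mathtt{Set}$ construction to the class $u \cdot \mF$, whose bivariate EGF is $u F(z,w)$, yields
\[
    \disjointpatch{\mF}(z,w,u) = e^{u F(z,w)}.
\]
The standard $\mathtt{Set}$ rule for labeled classes handles the general labeling of individual pieces automatically: distributing the vertex labels $\{1,\dots,n(P)\}$ and edge labels $\{1,\dots,m(P)\}$ among the $k$ pieces and dividing by $k!$ for the order is exactly what the expansion of $e^{u F(z,w)}$ encodes.

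For the second formula, the key step is a canonical decomposition of each patchwork $P$. On the set of pieces of $P$, take the transitive closure of the relation ``shares at least one vertex with''. Each resulting equivalence class is either a singleton, called \emph{isolated}, or contains at least two pieces, called \emph{non-isolated}. Let $P_{\mathrm{iso}}$ be the sub-patchwork formed by the isolated pieces and $P_{\mathrm{non}}$ the one formed by the non-isolated pieces. Then $P_{\mathrm{iso}}$ is a disjointed patchwork, $P_{\mathrm{non}}$ belongs to $\{\emptyset\} \cup Q^\mF$, and the two share no vertex (hence no edge either, since any shared edge would share its two endpoints and merge its two owning pieces into the same class). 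Conversely, any pair formed by a disjointed patchwork and a patchwork in $\{\emptyset\} \cup Q^\mF$ on disjoint vertex sets glues back into a unique patchwork whose isolated/non-isolated decomposition recovers the pair. This produces a bijection
\[
    \Patch{\mF} \;\longleftrightarrow\; \disjointpatch{\mF} \times \bigl(\{\emptyset\} \cup Q^\mF\bigr).
\]
Since $|P| = |P_{\mathrm{iso}}| + |P_{\mathrm{non}}|$ and vertex/edge labels split disjointly across the two factors, this bijection translates directly, via the disjoint-union and relabeled Cartesian-product rules, into
\[
    \Patch{\mF}(z,w,u) = e^{u F(z,w)} \bigl(1 + Q^\mF(z,w,u)\bigr).
\]

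The main obstacle I expect is the labeling bookkeeping in the Cartesian-product step: one must verify that the canonical labeling of $P$ corresponds, through the bijection, to canonical labelings of $P_{\mathrm{iso}}$ and $P_{\mathrm{non}}$ obtained by restricting and then order-preservingly relabeling, and that the reverse product construction recovers a canonically labeled patchwork. This is a routine consistency check for labeled combinatorial classes, but it is the only genuinely non-mechanical ingredient; everything else is a direct transcription of the combinatorial decomposition into the standard generating-function operations.
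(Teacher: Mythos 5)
Your proof is correct, and it follows exactly the route the paper intends: the paper in fact states Proposition~\ref{th:patchwork} without any proof, treating it as a direct consequence of the $\mathtt{Set}$ and relabeled-product rules of the symbolic method, which is precisely what you carry out (including the necessary observations that isolated and non-isolated pieces share neither vertices nor edges, so that $n(P)$, $m(P)$ and $|P|$ all split additively across the two factors). Your decomposition into singleton versus non-singleton classes under the transitive closure of vertex-sharing, together with the routine label-distribution check you flag at the end, is the standard and complete justification.
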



        \section{Number of small subgraphs in a random (multi)graph}
        \label{sec:subgraphs}


We consider here simple graphs and multigraphs without degree constraints and examine what can be said about the number of occurrences of subgraphs in the whole graph. 
First, we derive the exact and asymptotic numbers of (multi)graphs with a distinguished subgraph belonging to a family~$\mF$ in Sections~\ref{sec:distinguished_multi} and~\ref{sec:distinguished_simple}.

Next, we obtain an exact formula for the number of subgraphs of $\mF$ in Section~\ref{sec:enum-given-number-subgraphs}.
Finally, we apply our results to a variety of problems, some old and some new ones, in Section~\ref{sec:applications-to-subgraphs}: probability of occurrence of a ``high-density'' subgraph in a multigraph, densest subgraph in a simple graph, limiting Poisson distributions for the number of occurrences of a graph in a simple graph and of a strictly balanced graph in a multigraph.

        \subsection{Multigraphs with one distinguished subgraph}
        \label{sec:distinguished_multi}

We first consider here how we can obtain the generating function of the class of all (multi)graphs with one distinguished subgraph, which belongs to the family $\mF$.

\medskip
Given two multigraph families $\mH$ and $\mF$, we denote by $\mH^{[\mF]}$ the set of multigraphs from $\mH$ where exactly one copy of a multigraph from $\mF$ is distinguished.
If $\mF$ is a weighted family, then the weight of $G \in \mH^{[\mF]}$
is equal to the weight of the distinguished copy from $\mF$.

\begin{theorem}[Distinguished, exact/asymptotics, multigraphs] 
\label{th:distinguished_multi}

i) 
The number (or total weight) of multigraphs with $n$ vertices, $m$ edges, and that contain one distinguished subgraph from $\mF$, is equal to
\begin{equation}
	\mg^{[\mF]}_{n,m} =
    n! 2^m m! [z^n w^m] F(z,w) e^z e^{n^2 w/2}.
\label{eq:distinguished_multi_exact}
\end{equation}

ii) 
Assume that the generating function $F$ of the family $\mF$ is entire in both variables.
Let $D$ denote an open set containing the compact unit disk.
Let $n$ and $m = m(n)$ be two integers going to infinity such that $\frac{F(n z, 2m w / n^2)}{F(n, 2m / n^2)}$ converges uniformly on each compact $K \in D^2$ to an analytic function on $D^2$, $L(z,w)$.
Then the set $\mMG^{[\mF]}_{n,m}$ of multigraphs containing $n$ vertices, $m$ edges, and with one copy from~$\mF$ distinguished, has weight
\begin{equation}
    \mg^{[\mF]}_{n,m} \sim
    n^{2m} F \left( n, \frac{2m}{n^2} \right).
\label{eq:distinguished_multi_asympt}
\end{equation}
\end{theorem}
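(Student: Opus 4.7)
The plan is to apply the symbolic method. A pair $(G,F)$ with $G \in \mMG_{n,m}$ and $F$ a distinguished $\mF$-subgraph of $G$ decomposes into three independent ingredients: the distinguished $\mF$-multigraph $F$ itself (contributing $F(z,w)$); the extra vertices of $V(G)\setminus V(F)$, which form a labeled set (contributing $e^z$); and the remaining edges of $G$, namely those not in $E(F)$. The only subtlety is that the extra edges may link any two vertices of $G$, including vertices inside $F$, so they live on the full vertex set of size $n$ rather than on the extra vertex set alone. I would therefore treat $n$ as a fixed parameter and use the fact, equivalent to $\mg_{n,m} = n^{2m}$ from Section~\ref{sec_Models}, that the generating function in $w$ of all multigraphs on a fixed set of $n$ labeled vertices is $e^{n^2 w/2}$. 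Combining the three pieces by the relabeled-product rule for vertex labels and the analogous product rule for edge labels and orientations, and then extracting $[z^n w^m]$ and multiplying by the canonical-labeling factor $n!\,2^m\,m!$, yields exactly \eqref{eq:distinguished_multi_exact}.

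\textbf{Part (ii), saddle-point via rescaling.}
For the asymptotics I would start from Cauchy's coefficient formula and rescale with $z = nu$, $w = (2m/n^2)v$ on the circular contours $|u|=|v|=1$. A direct change of variables gives
\[
    [z^n w^m]\,F(z,w)\,e^z\,e^{n^2 w/2} = \frac{n^{2m}}{n^n\,2^m\,m^m} \cdot \frac{1}{(2\pi i)^2} \oint\!\oint F\!\left(nu,\tfrac{2m}{n^2}v\right) \frac{e^{nu} e^{mv}}{u^{n+1} v^{m+1}}\, du\, dv .
\]
Since $D$ is open and contains the closed unit disk, the unit bicircle is a compact subset of $D^2$, and the uniform-convergence hypothesis then gives $F(nu, 2mv/n^2) = F(n, 2m/n^2)\,(L(u,v) + o(1))$ uniformly on this contour. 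Pulling $F(n, 2m/n^2)$ out of the integral leaves a classical two-dimensional saddle-point integral with saddle at $(u,v) = (1,1)$, where $L(1,1) = 1$ by construction and $L$ is analytic nearby. The standard saddle-point estimate then gives
\[
    \frac{1}{(2\pi i)^2} \oint\!\oint (L(u,v) + o(1)) \frac{e^{nu} e^{mv}}{u^{n+1} v^{m+1}}\, du\, dv \sim \frac{n^n}{n!}\cdot\frac{m^m}{m!},
\]
and collecting the prefactors and multiplying by $n!\,2^m\,m!$ yields $\mg^{[\mF]}_{n,m} \sim n^{2m}\,F(n, 2m/n^2)$.

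\textbf{Main obstacle.}
The delicate point is justifying that the Cauchy integral is well approximated by its local saddle-point contribution when $F$ is a genuine transcendental entire function of two variables, for which no a priori bound on growth along the contour is available. The uniform-convergence assumption is precisely what bounds $|F(nu, 2mv/n^2)/F(n, 2m/n^2)|$ uniformly on the bicircle and validates the localization. A technically lighter alternative is to expand $F(z,w) = \sum_{j,k} a_{j,k}\,z^j w^k/(j!\,2^k\,k!)$, extract coefficients term-by-term to obtain
\[
    \mg^{[\mF]}_{n,m} = \sum_{j,k} a_{j,k}\binom{n}{j}\binom{m}{k}\, n^{2(m-k)},
\]
and compare with the expansion $n^{2m} F(n, 2m/n^2) = \sum_{j,k} a_{j,k}\,n^{j+2m-2k} m^k/(j!\,k!)$; the termwise equivalences $\binom{n}{j}\sim n^j/j!$ and $\binom{m}{k}\sim m^k/k!$ then have to be promoted to a sum over $(j,k)$, for which the uniform convergence of the rescaled $F$ supplies the needed dominated-convergence-type control.
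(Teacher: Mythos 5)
Your proposal is correct and follows essentially the same route as the paper's proof: the same three-part decomposition $F(z,w)\,e^z\,e^{n^2w/2}$ for part (i) (with $n$ treated as a parameter so that the added edges range over all $n^2$ ordered endpoint pairs), and for part (ii) the same rescaling $z\to nz$, $w\to \tfrac{2m}{n^2}w$ of the Cauchy integral onto the unit bicircle, followed by the uniform-convergence hypothesis to replace $F(nz,\tfrac{2m}{n^2}w)$ by $F(n,\tfrac{2m}{n^2})(L(z,w)+o(1))$ and a large-powers saddle point at $(1,1)$. The termwise alternative you sketch at the end is a reasonable extra, but the main argument is the one the paper gives.
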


\paragraph{Remark.} Although assuming that the function $F$ is entire in both variables might seem restrictive, it is in fact quite natural: if the family $\mF$ is finite, then the function $F(z,w)$ is a polynomial.
An example of an infinite family with infinite radius of convergence in both variables would be the family of cycles with increasing labelling on edges and vertices (the first vertex is labelled by~1, then the edge between 1 and~2 is labelled by~1, and so on).
But the family $\mF$ of all labelled cycles does not satisfy the assumption of the theorem, as it leads to a generating function satisfying $F(z,1)=\log \frac{1}{1-z}$, which has radius of convergence~1.

In Section~\ref{sec:applications-to-subgraphs}, Theorem~\ref{th:distinguished_multi} will also be applied when $\mF$ is the set of elements from a finite family, whose generating function, the exponential of a polynomial, is indeed entire.

\begin{proof}
A multigraph on $n$ vertices, where one $\mF$-subgraph is distinguished, is a copy of a multigraph~$F$ from $\mF$, a set of additional vertices, and a set of additional edges.
The symbolic method (see \cite{FS09}) translates this combinatorial description into an expression for the generating function, from which we extract the desired coefficient to get the exact expression of~\emph{i)}.
The asymptotics of \emph{ii)}\ is then extracted using a saddle-point method.
We now consider each of these points in detail.
 
    \paragraph{Exact expression.}
To build a multigraph, we start with a distinguished copy of a subgraph from $\mF$ and add first a set of vertices.
So far, this family is described by the generating function
\[
	F(z,w) e^z.
\]
Now, we add edges.
If the multigraph we want to obtain at the end contains $n$ vertices, then the number of possible edges is $n^2$.
With our convention (an edge has weight $\frac{1}{2}$), the generating function of one edge is $w/2$.
Thus adding a set of edges among the $n^2$ possible ones translates into multiplying the generating function with $e^{n^2 w/2}$.
We obtain
\[
	F(z,w) e^z e^{n^2 w/2}.
\]
Finally, we extract the coefficients in $z$ and $w$ to consider only multigraphs with $n$ vertices and $m$ edges and obtain
\[
	n! 2^m m! [z^n w^m] F(z,w) e^z e^{n^2 w/2}.
\]

    \paragraph{Asymptotics.}

We now apply a bivariate saddle-point method to extract the asymptotics (see Section~\ref{sec:saddle-point-heuristic}).

We start from the exact expression~\eqref{eq:distinguished_multi_exact}, to which we apply the following changes of variables
\[
	z \to n z, \quad \text{and} \quad w \to \frac{2m}{n^2} \, w,
\]
and get
\[
	\mg^{[\mF]}_{n,m} =
    n^{2m} \frac{n!}{n^n} \frac{m!}{m^m}
    [z^n w^m]F \left( n z, \frac{2m}{n^2} w \right)
    e^{n z + m w}.
\]
We rewrite the coefficient extractions as Cauchy integrals on circles of radii $1$
\[
	\mg^{[\mF]}_{n,m} =
    n^{2m} \frac{n!}{n^n} \frac{m!}{m^m}
    \frac{1}{(2i\pi)^2}
    \oint_{|w|=1} \oint_{|z|=1}
    F \left( n z, \frac{2m}{n^2} w \right)
    e^{n z + m w}
    \frac{\mathrm dz}{z^{n+1}} \frac{\mathrm dw}{w^{m+1}}.
\]
On this contour 
$\frac{F \left( n z, \frac{2m}{n^2} w \right)}{F \left(n,\frac{2m}{n^2} \right)}$
converges uniformly to $L(z,w)$ which is analytic at $(1,1)$, where its value is~$1$.
Now there exists a sequence of analytic functions $(\epsilon_n(z,w))_{n \geq 0}$
converging uniformly to $0$ such that
\[
	F \left( n z, \frac{2m}{n^2} w \right) =
    F \left(n,\frac{2m}{n^2} \right)
    \left( L(z,w) + \epsilon_n(z,w) \right).
\]
We thus have
\[
	\mg^{[\mF]}_{n,m} \sim 
    n^{2m} \frac{n!}{n^n} \frac{m!}{m^m}
    \frac{1}{(2i\pi)^2}
    \oint_{|w|=1} \oint_{|z|=1}
    F \left(n, \frac{2m}{n^2} \right)
    L(z,w)
    e^{n z + m w}
    \frac{\mathrm dz}{z^{n+1}} \frac{\mathrm dw}{w^{m+1}}.
\]
A simple saddle-point method for large powers at $z=1, w=1$  (see Theorem~VIII.8 from \cite{FS09} or Lemma~\ref{th:large_powers} in the appendix) and Stirling's formula then lead to
\[
	\mg^{[\mF]}_{n,m} \sim 
    n^{2m} \frac{n!}{n^n} \frac{m!}{m^m}
    F \left(n, \frac{2m}{n^2} \right)
    L(1,1)
    \frac{e^n}{\sqrt{2\pi n}}\frac{e^m}{\sqrt{2\pi m}} \sim
    n^{2m} F \left(n, \frac{2m}{n^2} \right).\qedhere
\]
\end{proof}

        \subsection{Simple graphs with one distinguished subgraph}
        \label{sec:distinguished_simple}

We now turn to simple graphs, and establish a result parallel to Theorem~\ref{th:distinguished_multi}, which is valid for multigraphs.

\begin{theorem}[Distinguished, exact/asymptotics, simple] \label{th:distinguished_simple}
i)
The number of simple graphs with $n$ vertices, $m$ edges, and that contain one distinguished subgraph from $\mF$, is
\begin{equation}
	\sg^{[\mF]}_{n,m} =  n! [z^n w^m] F \left(z,\frac{w}{1+w} \right) e^z (1+w)^{\binom{n}{2}} .
\label{eq:distinguished_simple_exact}
\end{equation}

ii)
Assume that the generating function $F$ of the family $\mF$ is entire in both variables.
Let $D$ denote an open set containing the compact unit disk.
Let $n$ and $m = m(n)$ be two integers going to infinity, such that $m = o(n^2)$ and that $\frac{F \big(n z, m w / \binom{n}{2}\big)}{F\big(n, m / \binom{n}{2}\big)}$
converges uniformly on each compact $K \in D^2$ to an analytic function on $D^2$, $L(z,w)$.
Then the set $\mSG^{[\mF]}_{n,m}$  of simple graphs containing $n$ vertices, $m$ edges, and with one copy from $\mF$ distinguished, has weight
\begin{equation}
\label{eq:distinguished_simple_asympt}
    \sg^{[\mF]}_{n,m} \sim
    \binom{\binom{n}{2}}{m}
    F \left( n, \frac{m}{\binom{n}{2}} \right),
\end{equation}
where $\binom{\binom{n}{2}}{m}$ is the total number of graphs with $n$ vertices and $m$ edges.

\end{theorem}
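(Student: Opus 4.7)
The plan is to mirror the proof of Theorem~\ref{th:distinguished_multi}, adapting the steps to the simple-graph convention in which edges are unlabeled and unoriented and every pair of vertices carries a factor $(1+w)$ instead of $e^{w/2}$.

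For part (i), I would build a simple $(n,m)$-graph carrying a distinguished $\mF$-subgraph by the symbolic method: pick $F\in\mF$, add a set of extra isolated vertices (factor $e^z$), and for each of the $\binom{n}{2}-m(F)$ pairs of vertices outside $E(F)$ decide whether it forms an edge (factor $(1+w)^{\binom{n}{2}-m(F)}$). Since the exponent of $(1+w)$ depends on the chosen $F$, the product does not decompose into a neat product of generating functions; the key algebraic trick is
\[
w^{m(F)}(1+w)^{\binom{n}{2}-m(F)} = \left(\frac{w}{1+w}\right)^{m(F)}(1+w)^{\binom{n}{2}},
\]
which absorbs the $m(F)$-dependence into the substitution $w\mapsto w/(1+w)$ applied to $F(z,w)$, while factoring out the uniform $(1+w)^{\binom{n}{2}}$. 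Extracting $n!\,[z^nw^m]$ then produces Equation~\eqref{eq:distinguished_simple_exact}.

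For part (ii), I would write the coefficient extraction as a bivariate Cauchy integral and perform a saddle-point analysis along the lines of the multigraph proof. Setting $p:=m/\binom{n}{2}$ and rescaling $z\to nZ$, $w\to pW$ moves the saddle points of the two ``large'' factors $e^{nZ}/Z^{n+1}$ and $(1+pW)^{\binom{n}{2}}/W^{m+1}$ asymptotically to $Z=W=1$. The hypothesis $m=o(n^2)$ guarantees $p\to 0$, so $pW/(1+pW)=pW\,(1+O(p))$ uniformly on compact subsets of the contour; combined with the uniform convergence of $F(nz,pw)/F(n,p)$ to $L(z,w)$, this implies $F\bigl(nZ,pW/(1+pW)\bigr)/F(n,p)\to L(Z,W)$ uniformly on each compact subset of $D^2$, with $L(1,1)=1$. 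Applying the standard large-powers saddle-point estimate (\ie \cite[Theorem~VIII.8]{FS09}) to each one-dimensional integral and using Stirling's formula, the prefactors collapse precisely to $\binom{\binom{n}{2}}{m}$, the exact number of simple $(n,m)$-graphs, while the residual factor is $L(1,1)\,F(n,p)=F(n,m/\binom{n}{2})$, yielding Equation~\eqref{eq:distinguished_simple_asympt}.

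The main technical obstacle will be controlling the interaction between the substitution $w\mapsto w/(1+w)$ and the asymptotic behaviour of $F$: the $w$-saddle sits at $w^*\sim p\to 0$, where the substitution is a mild $O(p)$ perturbation, so analyticity of $F$ together with the uniform convergence hypothesis is enough to transfer the limit. A secondary delicate point is the saddle-point estimate for $(1+w)^{\binom{n}{2}}$ without the shortcut $(1+p)^{\binom{n}{2}}\sim e^m$ (valid only when $m=o(n)$); this is handled by applying the large-powers lemma directly to $(1+w)^{\binom{n}{2}}/w^{m+1}$ rather than by approximating it by $e^{\binom{n}{2}w}/w^{m+1}$.
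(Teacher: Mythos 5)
Your proposal is correct and follows essentially the same route as the paper: part (i) is the identical decomposition (distinguished copy, isolated vertices, a binary choice for each of the $\binom{n}{2}-m(F)$ remaining pairs) with the same rewriting $w^{m(F)}(1+w)^{\binom{n}{2}-m(F)}=\left(\tfrac{w}{1+w}\right)^{m(F)}(1+w)^{\binom{n}{2}}$, the paper merely carrying it out first for a single isomorphism class and then summing by linearity; part (ii) uses the same rescaling $z\to nz$, $w\to \tfrac{m}{\binom{n}{2}}w$ and the same use of $m=o(n^2)$ to push the uniform limit $L$ through the substitution $w\mapsto w/(1+w)$.

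The one point where you genuinely diverge is the factor $\left(1+\tfrac{m}{\binom{n}{2}}w\right)^{\binom{n}{2}}$. The paper replaces it by $e^{mw}\left(1+\tilde{\epsilon}_n(w)\right)$ via an exp-log argument and then runs the large-powers saddle point on $e^{nz+mw}$; you keep $(1+w)^{\binom{n}{2}}$ and apply the large-powers lemma to it directly. Your variant is the more robust one, and you correctly identify why: the estimate $\left(1+\tfrac{m}{\binom{n}{2}}w\right)^{\binom{n}{2}}=e^{mw}(1+o(1))$ carries an error factor $\exp\bigl(-\tfrac{m^2}{2\binom{n}{2}}w^2+\cdots\bigr)$ that tends to $1$ only when $m=o(n)$, whereas the theorem is stated for all $m=o(n^2)$; correspondingly $\binom{\binom{n}{2}}{m}$ and $\binom{n}{2}^m/m!$ cease to be asymptotically equivalent once $m\gg n$. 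Applying the saddle-point lemma directly to $(1+w)^{\binom{n}{2}}/w^{m+1}$ produces the prefactor $\binom{\binom{n}{2}}{m}$ exactly and covers the whole stated range; the only extra care is that this saddle sits at $w=1/(1-\tfrac{m}{\binom{n}{2}})=1+o(1)$ rather than exactly at $1$, which is harmless since $L$ is analytic near $(1,1)$.
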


\begin{proof}

The proof mimics the one we gave for multigraphs, and we concentrate our efforts on the points where it differs.

    \paragraph{Exact expression.}
    A simple graph with one distinguished $\mH$-graph is a copy $F$ of a graph from~$\mH$, a set of additional vertices, and a set of additional edges. Those edges can link any pair of vertices, except those already linked in~$F$.
 
We first prove part \emph{i)} of the theorem for a family $\mH$
that is composed of the simple graphs isomorphic to some $(k,\ell)$-graph $H$; we shall then extend it to a general family~$\mF$ of simple graphs.

For the family $\mH$ defined above, let $\aut{H}$ be the number of automorphisms of the graph~$H$.
The number of $H$-graphs is $k!/\aut{H}$, and the generating function of~$\mH$~is 
\[ 
	H(z,w) = \frac{1}{\aut{H}} w^{\ell} z^k.
\]
The generating function of an $H$-graph and a set of isolated vertices is $H(z,w) e^z$.
If we assume that there are $n$ vertices, we extract the coefficient in $z$ and obtain $n! [z^n] H(z,w) e^z$.

\medskip
Then each pair of the $n$ vertices can be linked by an edge, except the pairs already linked in the $H$-graph: the number of edges that can be added is $\binom{n}{2} - \ell$.
For each of those edges, we decide either to add it, or to not add it.
Thus, the generating function of graphs on $n$ vertices with a distinguished $H$-graph, additional vertices, and additional edges, is
\[
    n! [z^n] H(z,w) e^z (1+w)^{\binom{n}{2} - \ell}.
\]
Replacing $H(z,w)$ by its expression, this is equal to
\[
     n! [z^n] \frac{1}{\aut{H}} w^{\ell} z^k e^z (1+w)^{\binom{n}{2} - \ell}
    =
    n! [z^n] H \left(z, \frac{w}{1+w} \right) e^z (1+w)^{\binom{n}{2}}.
\]
Finally, we fix the number of edges to $m$,
and obtain the number of $(n,m)$-graphs
where one $H$-graph is distinguished as
\[
    n! [z^n w^m] H \left(z, \frac{w}{1+w} \right) e^z (1+w)^{\binom{n}{2}}.
\]

\medskip
If $\mF$ is now a general graph family, its generating function is the sum of the generating functions of all $H$-graphs for which there exists at least one isomorphic copy of $H$ in $\mF$:
\[
    F(z,w) =
    \sum_{\text{there is an $H$-graph in $\mF$}}
    H(z,w).
\]
The number of $(n,m)$-graphs where one $\mF$-graph is distinguished is
\[
    \sum_{\text{there is a $H$-graph in $\mF$}}
    n! [z^n w^m] H \left(z, \frac{w}{1+w} \right) e^z (1+w)^{\binom{n}{2}},
\]
and by linearity this is simply
 \[
    n! [z^n w^m] F \left(z, \frac{w}{1+w} \right) e^z (1+w)^{\binom{n}{2}}.
\]

    \paragraph{Asymptotics.}

We again apply a bivariate saddle-point method to extract the asymptotics.
In expression~\eqref{eq:distinguished_simple_exact}, we apply the changes of variables
\[
	z \to n z, \qquad w \to \frac{m}{\binom{n}{2}} w,
\]
and obtain
\[
	\frac{n!}{n^n} \frac{\binom{n}{2}^m}{m^m}
    [z^n w^m]
    F \left(n z, \frac{m w / \binom{n}{2}}{1 + m w / \binom{n}{2}} \right)
    e^{n z} \left( 1 + \frac{m}{\binom{n}{2} w } \right)^{\binom{n}{2}}.
\]
Again the function
\[
    \frac{F \left(nz, m w / \binom{n}{2} \right)}{F\left(n, m / \binom{n}{2} \right)}
\]
converges uniformly to an analytic function $L(z,w)$, with $L(1,1) = 1$.
Furthermore, the function
\[
    \frac{F \left(nz, \frac{m w / \binom{n}{2}}{1 + m w / \binom{n}{2}} \right)}{F\left(n, m / \binom{n}{2} \right)}
\]
converges uniformly to $L(z,w)$ as well,
because $m = \smallo \left(\binom{n}{2} \right)$.
Thus, there exists a sequence of analytic functions $(\epsilon_n(z,w))_{n \geq 0}$
converging uniformly to $0$ such that
\[
	F \left(nz, \frac{m w / \binom{n}{2}}{1 + m w / \binom{n}{2}} \right) =
    F\left(n, \frac{m}{\binom{n}{2}} \right)
    \left( L(z,w) + \epsilon_n(z,w) \right).
\]
The number of $(n,m)$-graphs, with one $\mF$-subgraph distinguished, becomes
\[
	\frac{n!}{n^n} \frac{\binom{n}{2}^m}{m^m}
    F\left(n, \frac{m}{\binom{n}{2}} \right)
    [z^n w^m]
    \left( L(z,w) + \epsilon_n(z,w) \right)
    e^{n z} \left( 1 + \frac{m}{\binom{n}{2}} w \right)^{\binom{n}{2}}.
\]
By an exp-log argument (see Flajolet and Sedgewick~\cite[p.~29]{FS09}), there exists a sequence of analytic functions $(\tilde{\epsilon}_n(w))_{n \geq 0})$ converging uniformly to $0$ such that
\[
	\left( 1 + \frac{m}{\binom{n}{2}} w \right)^{\binom{n}{2}} =
    e^{m w} ( 1 + \tilde{\epsilon}_n(w) ),
\]
so the number of desired graphs can be written as 
\[
	\frac{n!}{n^n} \frac{\binom{n}{2}^m}{m^m}
    F\left(n, \frac{m}{\binom{n}{2}} \right)
    [z^n w^m]
    \left( L(z,w) + \epsilon_n(z,w) \right) (1 + \tilde{\epsilon}(w))
    e^{n z} e^{m w}.
\]
The end of the proof is parallel to the one for Theorem~\ref{th:distinguished_multi}:
the number of graphs is asymptotically equivalent to 
\[
	\frac{n!}{n^n} \frac{\binom{n}{2}^m}{m^m}
    F\left(n, \frac{m}{\binom{n}{2}} \right)
    [z^n w^m]
    L(1,1)
    e^{n z} e^{m w} 
    \sim 
    \binom{\binom{n}{2}}{m}
    F\left(n, \frac{m}{\binom{n}{2}} \right).\qedhere
\]
\end{proof}

        \subsection{Exact enumeration of (multi)graphs with a given number of subgraphs}
        \label{sec:enum-given-number-subgraphs}

We now turn our attention to deriving an exact expression for the number
\[
    \mg^\mF_{n,m,t} = n! 2^m m! [z^n w^m u^t] \mg^\mF(z,w,u)
\]
of multigraphs with $n$ vertices, $m$ edges and containing exactly $t$ $\mF$-subgraphs.
The main difficulty is that these subgraphs may overlap.
To describe the overlaps, we shall use the notion of \emph{patchwork}  defined in Section~\ref{sec:patchworkdef}, see also Section~\ref{sec:patchworks}.

\begin{theorem}[Number of subgraphs, exact, multigraphs] \label{th:exact_mgF}
The number $\mg^\mF_{n,m,t}$ of $(n,m)$-multigraphs that contain exactly $t$ $\mF$-subgraphs is
\begin{equation} \label{eq:multi-subgraphs}
    \mg_{n,m,t}^\mF = 
    n! 2^m m! [z^n w^m u^t]
    \Patch{\mF}(z, w, u-1)
    e^{z}
    e^{n^2 w /2}.
\end{equation}
\end{theorem}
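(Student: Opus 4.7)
The plan is to obtain this theorem by combining two results already proved in the paper: the inclusion–exclusion style identity of Proposition~\ref{th:number_of_subgraphs_general_weights}, which expresses the count of multigraphs with exactly $t$ subgraphs as a coefficient in the generating function of multigraphs with a distinguished patchwork, and the exact expression in Theorem~\ref{th:distinguished_multi} for the number of multigraphs with a distinguished subgraph from a given family (which allows the family to be weighted). The whole argument is thus a composition of these two identities with no genuine analytic content.

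More precisely, first I would specialize Proposition~\ref{th:number_of_subgraphs_general_weights} to the case of multigraphs with the trivial weight $\omega \equiv 1$. This yields
\[
    \mg^\mF_{n,m,t} = [u^t]\, \mg_{n,m}^{[\mPatch{\mF}(u-1)]},
\]
where the right-hand side is the weighted count of $(n,m)$-multigraphs in which one $\mF$-patchwork is distinguished, each patchwork $P$ carrying the formal weight $(u-1)^{|P|}$. Next I would apply Theorem~\ref{th:distinguished_multi}(i) to the family $\mPatch{\mF}(u-1)$, viewed as a weighted multigraph family. The exact formula \eqref{eq:distinguished_multi_exact} uses only the (weighted) generating function of the distinguished family and is valid for any weight function, so plugging in $\Patch{\mF}(z,w,u-1)$ in place of $F(z,w)$ gives
\[
    \mg_{n,m}^{[\mPatch{\mF}(u-1)]} = n!\, 2^m m!\, [z^n w^m]\, \Patch{\mF}(z,w,u-1)\, e^z\, e^{n^2 w/2}.
\]
Substituting this expression back into the previous identity and merging the coefficient extractions $[u^t]$ and $[z^n w^m]$ yields exactly \eqref{eq:multi-subgraphs}.

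The only subtlety to check is consistency of conventions: the generating function $\Patch{\mF}(z,w,u)$ introduced in Section~\ref{sec:patchworks} assigns to each patchwork $P$ the weight $u^{|P|}\, w^{m(P)}/(2^{m(P)} m(P)!)\, z^{n(P)}/n(P)!$, and is therefore precisely the exponential generating function (in $z$ and $w$) of $\mPatch{\mF}(u)$ viewed as a weighted multigraph family in the sense of Section~\ref{sec:def-multigraphs}. Since Theorem~\ref{th:distinguished_multi}(i) is stated for arbitrary weighted families, its application to $\mPatch{\mF}(u-1)$ is legitimate and no new estimate is required. I therefore do not expect any real obstacle: the argument is essentially bookkeeping, and the proof can be written in a few lines.
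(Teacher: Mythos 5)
Your proposal is correct and matches the paper's own proof: the paper likewise obtains the formula by applying the exact distinguished-subgraph expression of Theorem~\ref{th:distinguished_multi}(i) to the weighted family of patchworks and then invoking Proposition~\ref{th:number_of_subgraphs_general_weights} to extract the coefficient of $u^t$ after the substitution $u \to u-1$. Your remark on the consistency of the generating-function conventions for patchworks is the only point requiring care, and you have handled it correctly.
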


\begin{proof}
The weighted number of $(n,m)$-multigraphs with one $\mF$-patchwork distinguished is 
\[
\mg_{n,m}^{\Patch{\mF}(u)}=n! 2^m m! [z^n w^m] \Patch{\mF}(z, w, u) e^{z} e^{n^2 w /2}.
\]
By Proposition~\ref{th:number_of_subgraphs_general_weights} we have $\mg_{n,m,t}^\mF =[u^t] \mg_{n,m}^{[\Patch{\mF}(u-1)]}$ which completes the proof.
\end{proof}

We defer extracting more detailed information in the case where the subgraphs are strictly balanced until Section~\ref{sec:strictly}, and consider below a result that closely parallels Theorem~\ref{th:exact_mgF}, but now for simple graphs.


\begin{theorem}[Number of subgraphs, exact, simple] \label{th:subgraphs}
The number $\sg_{n,m,t}^\mF$ of $(n,m)$-graphs that contain exactly $t$ $\mF$-subgraphs is
\begin{equation} \label{eq:subgraphs}
    \sg_{n,m,t}^\mF =
    n! [z^n w^m u^t] \Patch{\mF} \left( z, \frac{w}{1+w}, u-1 \right) e^z (1+w)^{\binom{n}{2}}.
\end{equation}
\end{theorem}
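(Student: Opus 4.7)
My proof plan mirrors the proof of Theorem~\ref{th:exact_mgF} for multigraphs, combining the inclusion-exclusion identity of Proposition~\ref{th:number_of_subgraphs_general_weights} with the distinguished-subgraph enumeration of Theorem~\ref{th:distinguished_simple}(i). The strategy is first to express $\sg^{\mF}_{n,m,t}$ in terms of the total weight of $(n,m)$-graphs carrying a distinguished $\mF$-patchwork with piece-counting weight $u-1$, and then to compute that total weight by a symbolic decomposition.

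First I would invoke Proposition~\ref{th:number_of_subgraphs_general_weights} (applied with the trivial weight $\omega = 1$) to obtain
\[
    \sg^{\mF}_{n,m,t} = [u^t]\, \sg^{[\Patch{\mF}(u-1)]}_{n,m}.
\]
It therefore suffices to show that, for any formal parameter $u$,
\[
    \sg^{[\Patch{\mF}(u)]}_{n,m}
    = n!\,[z^n w^m]\, \Patch{\mF}\!\left(z,\tfrac{w}{1+w},u\right) e^{z} (1+w)^{\binom{n}{2}};
\]
substituting $u \mapsto u-1$ and extracting $[u^t]$ then yields the statement.

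To establish this identity I would repeat the combinatorial decomposition used in the proof of Theorem~\ref{th:distinguished_simple}(i), but with the ``distinguished object'' being a patchwork rather than a single subgraph. A simple $(n,m)$-graph with a distinguished $\mF$-patchwork $P$ decomposes uniquely as: (a) the patchwork $P$ itself, whose underlying simple graph $G(P)$ has $n(P)$ vertices and $m(P)$ edges and carries weight $u^{|P|}$; (b) a set of additional isolated vertices, contributing $e^{z}$; (c) a set of ``new'' edges placed among pairs of vertices not already joined by an edge of $G(P)$, contributing $(1+w)^{\binom{n}{2}-m(P)}$ once the total vertex count is fixed to $n$. By the symbolic method (disjoint union, relabelled product, set), the generating function of such objects is
\[
    \Patch{\mF}(z,w,u)\,e^{z}\,(1+w)^{\binom{n}{2}-m(P)},
\]
where the exponent $m(P)$ is read off from the monomial in $w$. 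Pulling $(1+w)^{-m(P)}$ inside the patchwork series amounts exactly to the substitution $w \mapsto w/(1+w)$, because each monomial $w^{m(P)}$ in $\Patch{\mF}(z,w,u)$ becomes $\bigl(w/(1+w)\bigr)^{m(P)}$; this is the same algebraic trick as in Theorem~\ref{th:distinguished_simple}. We thus get
\[
    \Patch{\mF}\!\left(z,\tfrac{w}{1+w},u\right) e^{z} (1+w)^{\binom{n}{2}},
\]
and extracting $n!\,[z^n w^m]$ selects the graphs with exactly $n$ vertices and $m$ edges.

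The only delicate point is the justification of step (c): one must verify that writing a simple $(n,m)$-graph with a distinguished patchwork as ``patchwork + isolated vertices + extra non-overlapping edges'' is an unambiguous, bijective decomposition. This follows from the definition of an $\mF$-patchwork in simple graphs, where edges of different pieces that connect the same pair of vertices are automatically merged, so $G(P)$ is an ordinary simple subgraph of the ambient graph $G$ and the ``new'' edges are exactly those of $E(G)\setminus E(G(P))$. Once this decomposition is in place, the rest is routine generating-function bookkeeping, parallel to the multigraph argument: there the analogous adjustment was multiplying by $e^{n^2 w/2}$ to account for all possible labelled oriented edges among $n$ vertices, and here it is multiplying by $(1+w)^{\binom{n}{2}}$ after the $w \mapsto w/(1+w)$ substitution.
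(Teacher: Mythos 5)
Your proposal is correct and follows essentially the same route as the paper: the paper's proof simply replaces $F(z,w)$ by $\Patch{\mF}(z,w,u)$ in Theorem~\ref{th:distinguished_simple}\,(i) and then applies Proposition~\ref{th:number_of_subgraphs_general_weights}, which is exactly your two-step plan. Your additional justification of the ``patchwork + isolated vertices + extra edges'' decomposition is a correct elaboration of what the paper leaves implicit.
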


\begin{proof}
If we replace $F(z,w)$ by $\Patch{\mF}(z,w,u)$ in Theorem~\ref{th:distinguished_simple} we obtain
\[
    \sg^{\Patch{\mF}(u)}_{n,m} =
    n! [z^n w^m]
    \Patch{\mF} \left(z,\frac{w}{1+w},u \right)
    e^z (1+w)^{\binom{n}{2}}.
\]
Then, applying Proposition~\ref{th:number_of_subgraphs_general_weights} completes the proof.
\end{proof}

	\subsection{Asymptotics for the number of occurrences of subgraphs}
	\label{sec:applications-to-subgraphs}
The problem with the exact expressions derived in Theorems~\ref{th:exact_mgF} and \ref{th:subgraphs} is that we do not know in general the expression of the generating function of patchworks.
Thus, Theorems~\ref{th:distinguished_multi} and~\ref{th:distinguished_simple} cannot be directly applied to extract the asymptotics of the coefficients.
However, partial information is enough to address some interesting problems.
In Section~\ref{sec:densest-subgraphs} below, we give an upper bound for the probability that a high-density subgraph appears in a multigraph of lower density, then derive a bound on the probability of appearance for the densest subgraph of a family~$\cal F$ in a random simple graph.
We then consider in Section~\ref{sec:strictly} the appearance of strictly balanced subgraphs in multigraphs.
Finally, we show in Section~\ref{sec:distribution_number_subgraphs} that we can easily rederive the well-known Poisson limiting distribution for the number of subgraphs in a simple graph.

	\subsubsection{Dense subgraphs, and densest subgraph}
        \label{sec:densest-subgraphs}

We investigate here whether a subgraph of high density is likely to appear in a (multi)graph of smaller density.

As a first application of Theorem~\ref{th:distinguished_multi}, we prove that subgraphs with high edge density are unlikely to appear in random multigraphs with small edge density.
This result was first derived by Erd\H{o}s and R\'enyi~\cite{ER60}.

\begin{corollary}[Probability for high-density subgraph, multigraphs] \label{th:general_small_density}
Assume that $m = \smallo \left( n^{2 - 1/d(F)} \right)$. 
Then the probability for a uniform random multigraph from $\mMG_{n,m}$
to contain one or more copies of the subgraph $F$ 
tends to $0$ at rate $\bigO \left( m \, n^{1/d(F) - 2} \right)^{m(F)}$, as $n\to\infty$.
\end{corollary}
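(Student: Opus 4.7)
The plan is to combine Proposition~\ref{th:general_weight_mean} with Theorem~\ref{th:distinguished_multi}, applied to the family $\mF$ consisting of all canonically labeled multigraphs isomorphic to $F$, and then conclude by a first-moment (Markov) argument.

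First, I would note that this family $\mF$ has generating function
\[
    F(z,w) = \frac{z^{n(F)} w^{m(F)}}{\aut{F}},
\]
since there are exactly $n(F)!\,2^{m(F)}\,m(F)!/\aut{F}$ multigraphs with canonical labeling isomorphic to $F$. This is a monomial, hence entire in both variables, and the normalized ratio appearing in the hypothesis of Theorem~\ref{th:distinguished_multi}~\emph{ii)} satisfies
\[
    \frac{F\!\left(nz, \tfrac{2m}{n^2}w\right)}{F\!\left(n, \tfrac{2m}{n^2}\right)}
    = z^{n(F)}\, w^{m(F)},
\]
independently of $n$, so it converges uniformly on compact sets to the analytic function $L(z,w) = z^{n(F)} w^{m(F)}$. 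All hypotheses of Theorem~\ref{th:distinguished_multi}~\emph{ii)} are therefore satisfied with no constraint needed on $m$ beyond $m \to \infty$.

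The theorem then gives
\[
    \mg^{[F]}_{n,m} \sim n^{2m}\, F\!\left(n, \tfrac{2m}{n^2}\right)
    = n^{2m}\cdot \frac{n^{n(F)}\,(2m/n^2)^{m(F)}}{\aut{F}},
\]
while $\mg_{n,m} = n^{2m}$. By Proposition~\ref{th:general_weight_mean}, the expected number of copies of $F$ in a uniform random element of $\mMG_{n,m}$ is therefore asymptotically
\[
    \esp\bigl[G[F]\bigr]
    \sim \frac{2^{m(F)}}{\aut{F}}\, n^{n(F)} \left(\frac{m}{n^2}\right)^{m(F)}
    = \frac{2^{m(F)}}{\aut{F}}\,
       \bigl(m\, n^{1/d(F)-2}\bigr)^{m(F)},
\]
where in the last equality I used $n(F) = m(F)/d(F)$ so that $n^{n(F)-2m(F)} = n^{m(F)(1/d(F)-2)}$.

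Finally, Markov's inequality yields
\[
    \proba\bigl(G[F] \geq 1\bigr) \leq \esp\bigl[G[F]\bigr]
    = \bigO\!\left(\bigl(m\, n^{1/d(F)-2}\bigr)^{m(F)}\right),
\]
which tends to $0$ precisely under the hypothesis $m = \smallo(n^{2-1/d(F)})$, at the claimed rate. The only mildly delicate point is the bookkeeping in the last display — verifying that the exponent of $n$ simplifies correctly using the definition of $d(F)$ — but there is no genuine obstacle: the work has all been packaged into Theorem~\ref{th:distinguished_multi}.
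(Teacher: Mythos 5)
Your proof is correct and follows essentially the same route as the paper's: Proposition~\ref{th:general_weight_mean} plus Theorem~\ref{th:distinguished_multi} to obtain $\esp\bigl[G[F]\bigr] = \Theta\bigl((m\,n^{1/d(F)-2})^{m(F)}\bigr)$, followed by a first-moment (Markov) conclusion. The only, immaterial, difference is normalization: the paper plugs in the exponential generating function of the singleton $\{F\}$ rather than that of the full isomorphism class with the $1/\aut{F}$ factor, which affects only the constant and not the claimed rate.
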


\begin{proof}
Let $F$ be a given multigraph with generating function
\[
	F(z,w)=\frac{z^{n(F)}}{n(F)!}\frac{w^{m(F)}}{2^{m(F)}m(F)!}.
\]
By Proposition~\ref{th:general_weight_mean} the expected number of $F$-subgraphs in a random $(n,m)$-multigraph is $\esp(G[F])=\mg_{n,m}^{[\mF]}/\mg_{n,m}=\mg_{n,m}^{[\mF]}/n^{2m}$. 
Applying Theorem~\ref{th:distinguished_multi}, we obtain for $m = \bigO(n^{\alpha})$ with $0 < \alpha < 2$
\[
	\esp(G[F]) \sim F\left(n,\frac{2m}{n^2}\right) = \bigO(n^{n(F) - 2m(F) +\alpha m(F)}) .
\]
Finally, this yields
\[
	\alpha < 2- \frac{1}{d(F)} \Rightarrow \esp(G[F]) \xrightarrow[n\to \infty]{} 0 \Rightarrow \mathbb{P}(G[F] > 0) \xrightarrow[n \to \infty]{} 0. \qedhere
\]
\end{proof}

This bound can be further improved by noticing that the relevant parameter is not the density of the multigraph $F$, but the density of its \emph{densest} subgraph, \ie its \emph{essential density} $d^\star(F)$.
We directly obtain the following:

\begin{corollary}[Probability for high-essential-density subgraph, multigraphs] 
\label{cor:essential_density_multigraph}
Denote by $d^{\star}$ the density of a maximal densest subgraph of $F$, and consider a random $(n,m)$-multigraph $G$ with $m = \bigO(n^{\alpha})$ for some fixed $0 < \alpha < 2$.
Then
\[
\begin{cases}
	G[F] = 0 \textrm{ asymptotically almost surely} & \textrm{ if } \alpha < 2 - 1/d^{\star}, \\
    \esp(G[F]) = \bigO(n^{m(F) (\alpha - 2 + 1/d)}) & \textrm{ if } \alpha \ge 2 - 1/d^{\star} .     
\end{cases}
\]
\end{corollary}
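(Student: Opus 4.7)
The plan is to derive both parts as direct consequences of the two preceding results, using a simple monotonicity argument for the first part and a direct first-moment computation for the second.

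\textbf{Part 1 (almost-sure vanishing when $\alpha < 2 - 1/d^\star$).} I would fix a subgraph $H^\star \subset F$ attaining the essential density, so that $d(H^\star) = d^\star$. The key (easy) observation is monotonicity of subgraph containment: since $H^\star \subset F$, every embedded copy of $F$ in a multigraph $G$ carries along an embedded copy of $H^\star$, so the event $\{G[F] > 0\}$ is contained in $\{G[H^\star] > 0\}$. Applying Corollary~\ref{th:general_small_density} to the single-graph family $\{H^\star\}$, whose density is exactly $d^\star$, the hypothesis $\alpha < 2 - 1/d^\star$ gives $\proba(G[H^\star] > 0) \to 0$, hence $\proba(G[F] > 0) \to 0$.

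\textbf{Part 2 (expectation bound when $\alpha \ge 2 - 1/d^\star$).} Here I would simply apply Proposition~\ref{th:general_weight_mean} together with Theorem~\ref{th:distinguished_multi}~\emph{ii)} directly to $F$ itself. The generating function of an isomorphism class of $F$ is the monomial
\[
    F(z,w) = \frac{z^{n(F)}}{n(F)!}\,\frac{w^{m(F)}}{2^{m(F)} m(F)!},
\]
which is entire and trivially satisfies the uniform-convergence hypothesis of Theorem~\ref{th:distinguished_multi}~\emph{ii)}. Hence $\esp(G[F]) = \mg_{n,m}^{[\mF]}/\mg_{n,m} \sim F(n,2m/n^2) = \bigO(n^{n(F)-2m(F)}\, m^{m(F)})$. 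Substituting $m = \bigO(n^\alpha)$ and rewriting $n(F) - 2m(F) + \alpha m(F) = m(F)\bigl(\alpha - 2 + 1/d(F)\bigr)$ yields the claimed bound.

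\textbf{Expected obstacle.} There is essentially no analytic obstacle: Part~1 is a one-line monotonicity argument plugged into Corollary~\ref{th:general_small_density}, and Part~2 is a direct restatement of the first-moment asymptotics already derived in the proof of that corollary. The only subtlety is bookkeeping around which density controls which half of the statement: the threshold in Part~1 is dictated by the density $d^\star$ of the witness subgraph $H^\star$ (the densest piece of $F$), while the expectation in Part~2 is naturally phrased through $d(F)$, since it is the whole $F$ whose copies we are counting.
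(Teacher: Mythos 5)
Your proposal is correct and follows essentially the same route as the paper: the first part is exactly the paper's monotonicity argument ($\proba(G[F]>0)\le\proba(G[H^\star]>0)$ combined with Corollary~\ref{th:general_small_density} applied to a densest subgraph), and the second part is the same direct application of Theorem~\ref{th:distinguished_multi} via Proposition~\ref{th:general_weight_mean}, with the exponent bookkeeping $n(F)-2m(F)+\alpha m(F)=m(F)(\alpha-2+1/d(F))$ carried out correctly. Your closing remark that the threshold is governed by $d^\star$ while the expectation bound is phrased through $d(F)$ is also the right reading of the (slightly terse) statement.
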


\begin{proof}
Let $G$ denote a random $(n,m)$-multigraph.
If $H$ is a subgraph of $F$, then $G$ contains $F$ only if it contains $H$, so
\[
	\proba(G[F] > 0) \leq \proba(G[H] > 0).
\]
Assume that $H$ is a densest subgraph of $F$. By Corollary~\ref{th:general_small_density}, $H$ almost surely does not appear whenever $\alpha < 2 - 1/d(H) = 2 - 1/d^{\star}$, and the same holds for $F$.

The bound on $\esp(G[F])$ when $\alpha \ge 2 - 1/d^{\star}$ follows directly from Theorem~\ref{th:distinguished_multi}.
\end{proof}

We now turn to simple graphs, for which we obtain a new proof of the following classic result of Erd\H{o}s and R\'enyi~\cite{ER60} and Bollob\'as~\cite{Bo81} by a simple application of Theorem~\ref{th:distinguished_simple}.

\begin{corollary}[Probability for high-essential-density subgraph, simple graphs] 
\label{cor:prob-densest-simple}
Denote by $d^{\star}$ the density of a maximal densest subgraph of $F$, and consider a random $(n,m)$-graph $G$ with $m = \bigO(n^{\alpha})$ for some fixed $0 < \alpha < 2$.
Then
\[
\begin{cases}
	G[F] = 0 \textrm{ asymptotically almost surely} & \textrm{ if } \alpha < 2 - 1/d^{\star}, \\
    \esp(G[F]) = \bigO(n^{m(F) (\alpha - 2 + 1/d)}) & \textrm{ if } \alpha \ge 2 - 1/d^{\star} .     
\end{cases}
\]
\end{corollary}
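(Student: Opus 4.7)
The plan is to mirror the proof of Corollary~\ref{cor:essential_density_multigraph}, replacing the use of Theorem~\ref{th:distinguished_multi} by its simple-graph analogue, Theorem~\ref{th:distinguished_simple}. The monotonicity argument --- if $G$ contains $F$ as a subgraph, then $G$ contains any $H\subset F$, so $\proba(G[F]>0)\le \proba(G[H]>0)\le \esp(G[H])$ --- is purely combinatorial and transfers verbatim from multigraphs to simple graphs. Hence it suffices to establish a good asymptotic bound on $\esp(G[F])$ for a single finite graph $F$ in the simple-graph model, and then choose $H$ to be a subgraph of $F$ of maximal density $d^\star$.

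First I would specialize Theorem~\ref{th:distinguished_simple} to the case where $\mF$ consists of all simple graphs isomorphic to a fixed graph $F$ with $n(F)$ vertices and $m(F)$ edges; its generating function is the monomial $F(z,w)=\frac{z^{n(F)}w^{m(F)}}{\aut{F}}$, which is entire, and the ratio $F(nz,mw/\binom{n}{2})/F(n,m/\binom{n}{2})$ equals $z^{n(F)}w^{m(F)}$, analytic on $D^2$ with value $1$ at $(1,1)$, so the hypotheses of Theorem~\ref{th:distinguished_simple} are satisfied as long as $m=\smallo(n^2)$ (which is guaranteed by $\alpha<2$). Combining Theorem~\ref{th:distinguished_simple} with Proposition~\ref{th:general_weight_mean} gives
\[
    \esp(G[F]) \;=\; \frac{\sg^{[\mF]}_{n,m}}{\sg_{n,m}} \;\sim\; F\!\left(n,\frac{m}{\binom{n}{2}}\right)\;=\;\frac{n^{n(F)}}{\aut{F}}\left(\frac{m}{\binom{n}{2}}\right)^{m(F)}.
\]
With $m=\bigO(n^\alpha)$ and $\binom{n}{2}\sim n^2/2$, this is $\bigO\bigl(n^{n(F)+m(F)(\alpha-2)}\bigr)=\bigO\bigl(n^{m(F)(\alpha-2+1/d(F))}\bigr)$.

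From here the two cases of the corollary follow immediately. Applying the bound above to a subgraph $H\subset F$ with $d(H)=d^\star$, so that $\esp(G[H])=\bigO\bigl(n^{m(H)(\alpha-2+1/d^\star)}\bigr)$, the exponent is negative whenever $\alpha<2-1/d^\star$. Since $\proba(G[F]>0)\le \proba(G[H]>0)\le \esp(G[H])\to 0$ by Markov's inequality, we obtain $G[F]=0$ a.a.s. For $\alpha\ge 2-1/d^\star$, we simply apply the asymptotic of $\esp(G[F])$ derived in the previous paragraph, which gives the stated bound $\bigO(n^{m(F)(\alpha-2+1/d)})$.

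I do not foresee a genuine obstacle: the only point requiring a bit of care is the check that Theorem~\ref{th:distinguished_simple}'s hypotheses (entire generating function, uniform convergence of the normalized ratio, and $m=\smallo(n^2)$) all hold for a single finite graph $F$, and this is immediate because the generating function is a single monomial. Everything else is formally identical to the multigraph proof of Corollary~\ref{cor:essential_density_multigraph}, with the denominator $\mg_{n,m}=n^{2m}$ replaced by $\sg_{n,m}=\binom{\binom{n}{2}}{m}$.
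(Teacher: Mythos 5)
Your proposal is correct and follows exactly the route the paper intends: the paper's own proof of this corollary is a one-line reference to Theorem~\ref{th:distinguished_simple} together with the arguments of Corollaries~\ref{th:general_small_density} and~\ref{cor:essential_density_multigraph}, which is precisely what you carry out (monomial generating function, Proposition~\ref{th:general_weight_mean}, the exponent computation via $n(F)=m(F)/d(F)$, and the monotonicity/Markov step applied to a densest subgraph $H$). No discrepancies.
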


\begin{proof}
We use the results from Theorem~\ref{th:distinguished_simple} and the same arguments as in the proofs of Corollaries~\ref{th:general_small_density} and~\ref{cor:essential_density_multigraph}.
\end{proof}

        \subsubsection{Strictly balanced subgraphs in multigraphs}
        \label{sec:strictly}

We have already noted that extracting exact information on the number of subgraphs from the exact expression derived in Theorem~\ref{th:exact_mgF} is hindered by the fact that we do not know an explicit expression for the generating function of patchworks.
In the case of strictly balanced subgraphs we can nevertheless obtain some information on the distribution of the number of occurrences of these subgraphs, and show that it follows asymptotically a Poisson distribution.

For the notions we use in the context of patchworks recall Section~\ref{sec:patchworks}. Moreover, since patchworks are weighted multigraphs, enriched with further information, we define its density $d(P)$ and its essential density $d^\star(P)$ similarly as for multigraphs. If $P$ is a $F$-patchwork and has essential density $d^\star(F)$, we call it an \emph{essential patchwork}.
Recall also that a patchwork is disjointed if it contains no pair of pieces sharing one vertex or more.
Given a multigraph $F$, let $\pair_F$ denote the set of $F$-patchworks that contain exactly two pieces and are not disjointed.
The following lemma is due to  Erd\H{o}s and R\'enyi~\cite{ER60}; we give again its proof for the sake of completeness.

\begin{lemma} \label{th:strict_balanced}
A multigraph $F$ is strictly balanced if and only if
the density of any patchwork from $\pair_F$
is greater than the density of $F$.
\end{lemma}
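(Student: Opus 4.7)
The plan is to reduce the lemma to a single algebraic equivalence relating the densities of a patchwork $P \in \pair_F$, of $F$, and of the common subgraph shared by the two pieces of $P$, and then to derive both directions almost mechanically.

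First, I would fix $P \in \pair_F$ with pieces $F_1, F_2$ and define the overlap $H$ by $V(H) = V(F_1) \cap V(F_2)$ and $E(H) = E(F_1) \cap E(F_2)$. The patchwork identification rule (two edges with the same label must connect the same vertices with the same orientation) makes $H$ a well-defined sub-multigraph of both $F_1$ and $F_2$. Non-disjointedness gives $n(H) \geq 1$, and because the two pieces are distinct, $H$ is a strict subgraph of $F$ (through the isomorphisms $F_1 \cong F \cong F_2$). From the very definition of the vertex and edge sets of $P$ in~\eqref{PW-graph}, one then gets $n(P) = 2n(F) - n(H)$ and $m(P) = 2m(F) - m(H)$.

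The key computation is then
\[
d(P) - d(F) = \frac{2m(F) - m(H)}{2n(F) - n(H)} - \frac{m(F)}{n(F)} = \frac{n(H)\bigl(d(F) - d(H)\bigr)}{2n(F) - n(H)},
\]
and since $n(H) > 0$ and $n(P) = 2n(F) - n(H) > 0$, this yields the equivalence $d(P) > d(F) \iff d(F) > d(H)$. The forward direction of the lemma follows immediately: if $F$ is strictly balanced, then $d(F) > d(H)$ for the non-empty strict subgraph $H$, hence $d(P) > d(F)$ for every $P \in \pair_F$. For the converse, given any strict subgraph $H'$ of $F$ with $n(H') \geq 1$, I would construct a pair $P \in \pair_F$ whose overlap is isomorphic to $H'$: take $F_1$ to be $F$ with its original labeling and build $F_2$ by keeping the labels of $H'$ inside $F_1$ and assigning fresh, previously unused vertex and edge labels to the remaining part of a second copy of $F$. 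Then $F_1 \neq F_2$ because $H' \subsetneq F$, so $\{F_1, F_2\} \in \pair_F$ with overlap exactly $H'$, and the equivalence gives $d(F) > d(H')$, proving $F$ is strictly balanced. The empty-subgraph case is handled trivially under the implicit assumption that $F$ has at least one edge.

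The one point requiring care is the construction in the reverse direction: one must check that no accidental merging occurs between $F_1$ and $F_2$ outside of $H'$, which is guaranteed since the patchwork merging rule is governed purely by shared labels and we are free to pick those labels fresh on $V(F) \setminus V(H')$ and $E(F) \setminus E(H')$. Everything else reduces to the elementary manipulation of the mediant-type fraction above.
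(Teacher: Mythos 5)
Your proof is correct, and it is worth comparing to the paper's, which argues differently and, notably, only in one direction. The paper fixes the overlap $H$ of the two pieces, forms the ``complement'' $I = (V(F_1)\setminus V(H),\, E(F_1)\setminus E(H))$ with density $d(I)=\frac{m(F_1)-m(H)}{n(F_1)-n(H)}$, and applies the mediant property twice: $d(H)<d(F_1)$ forces $d(I)>d(F_1)$, and since $P=F_2\cup I$, the density of $P$ lies strictly between $d(F_2)=d(F)$ and $d(I)$, hence exceeds $d(F)$. Your single identity
\[
d(P)-d(F)=\frac{n(H)\bigl(d(F)-d(H)\bigr)}{2n(F)-n(H)}
\]
packages the same mediant phenomenon into one line and has two advantages. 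First, it is an \emph{equivalence} rather than a one-way implication, which is exactly what the converse needs; the paper states the lemma as an ``if and only if'' but its proof only establishes the forward direction (the only one it subsequently uses), whereas you supply the reverse direction via the fresh-label construction realizing an arbitrary non-empty strict subgraph $H'$ as the overlap of a pair --- and your check that no accidental merging occurs is the right point to worry about. Second, your identity remains valid in the edge case $V(H)=V(F_1)$ with $E(H)\subsetneq E(F_1)$, where the paper's $d(I)$ has a zero denominator and the argument would need a separate (easy) remark. The remaining degenerate cases (empty overlap is excluded by non-disjointedness; $F$ with no edges makes $\pair_F$ empty and the statement vacuous) are handled or flagged appropriately, so your write-up is, if anything, the more complete of the two.
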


\begin{proof}
We consider a patchwork $P$ from $\pair_F$, denote by $F_1$ and $F_2$ its two pieces, copies of $F$, and by $H$ the maximum common subgraph of $F_1$ and $F_2$.
Let also $I$ denote the couple $(V(F_1) \setminus V(H), E(F_1) \setminus E(H))$.
$I$ is not a multigraph, because some of its edges link a vertex from $I$ to a vertex from $H$.
However, we can define its density as
$$d(I) = \frac{m(F_1) - m(H)}{n(F_1) - n(H)}.$$
The multigraph $F_1$ is obtained by adding the vertices and edges from $I$ to $H$.
Since $F_1$ is strictly balanced, we have $d(H) < d(F_1)$, which implies $d(I) > d(F_1)$.
The patchwork $P$ from $\pair_F$ is obtained by adding the vertices and edges from $I$ to $F_2$.
Thus the density of $P$ is greater than the density of $F$.
\end{proof}

In particular, Lemma~\ref{th:strict_balanced} entails that only the disjointed patchworks of a strictly balanced multigraph are essential, and Corollary~\ref{th:general_small_density} implies that they are the most likely to appear.

\medskip
We now consider the limit law of $G[F]$, the number of $F$-subgraphs in $G$, when $F$ is a fixed strictly balanced multigraph, and $G$ is drawn uniformly at random from $\mMG_{n,m}$, for $m = m(n)$.
According to Corollary~\ref{th:general_small_density}, a random multigraph from $\mMG_{n,m}$
with $m = \smallo \left( n^{2 - n(F)/m(F)} \right)$ is unlikely to contain any copy of the multigraph $F$.
We now focus on the critical case $m = \exactbigO\left( n^{2 - n(F)/m(F)} \right)$.

\begin{theorem}[Poisson law, strictly-balanced, multigraphs]
\label{th:limit_law_multi}
Given a strictly balanced multigraph $F$,
setting $\alpha = 2 - 1 / d(F)$
and $m \sim c n^{\alpha}$,
and considering a random multigraph $G$ from $\mMG_{n,m}$,
then $G[F]$ follows in the limit a Poisson law of parameter
\[
    \lambda(F) = \frac{c^{m(F)}}{m(F)! \, n(F)!}.
\]
\end{theorem}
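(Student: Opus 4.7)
The plan is to invoke Proposition~\ref{th:general_weights_second_asymptotics}, which reduces the Poisson limit to exhibiting a sequence $\lambda_n:\mMG\to\mathbb R$ satisfying three conditions. Guided by the asymptotic in Theorem~\ref{th:distinguished_multi}, the natural choice is
\[
\lambda_n(H):=H\!\left(n,\,\tfrac{2m}{n^2}\right),
\]
where $H(z,w)$ denotes the exponential generating function of the isomorphism class of the multigraph $H$. The strict balancedness of $F$ enters through Lemma~\ref{th:density_of_pairs}, which guarantees $d(P)>d(F)$ for every $P\in\pair_F$.

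For the decoupling condition~(b)---that $\mg_{n,m}^{[He^{cF}]}\sim\lambda_n(H)\,e^{c\lambda_n(F)}\,\mg_{n,m}$ for any constant $c$ and any fixed multigraph $H$---I would apply Theorem~\ref{th:distinguished_multi} to the family $He^{cF}$. Its generating function $H(z,w)\exp(cF(z,w))$ is entire in both variables because $F$ and $H$ are finite, so both $F(z,w)$ and $H(z,w)$ are monomials of the form $z^{n(\cdot)}w^{m(\cdot)}/\mathrm{const}$. A direct simplification exploiting this monomial form yields
\[
\frac{H(nz,\,2mw/n^2)\exp\!\bigl(cF(nz,\,2mw/n^2)\bigr)}{H(n,\,2m/n^2)\exp\!\bigl(cF(n,\,2m/n^2)\bigr)}=z^{n(H)}w^{m(H)}\exp\!\Bigl(cF(n,\tfrac{2m}{n^2})\bigl(z^{n(F)}w^{m(F)}-1\bigr)\Bigr),
\]
which converges uniformly on compacts to the entire function $z^{n(H)}w^{m(H)}\exp\!\bigl(c\lambda(F)(z^{n(F)}w^{m(F)}-1)\bigr)$, provided $F(n,2m/n^2)$ admits a finite positive limit $\lambda(F)$ (established below). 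Theorem~\ref{th:distinguished_multi} then delivers $\mg_{n,m}^{[He^{cF}]}\sim n^{2m}\lambda_n(H)e^{c\lambda_n(F)}=\lambda_n(H)e^{c\lambda_n(F)}\mg_{n,m}$, which is exactly condition~(b).

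Conditions~(a) and~(c) follow from a single density calculation: under the critical scaling $m\sim c\,n^{\alpha}$ with $\alpha=2-1/d(F)$, the monomial form of $H(z,w)$ makes $\lambda_n(H)$ proportional to $n^{n(H)-2m(H)+\alpha m(H)}=n^{m(H)(1/d(H)-1/d(F))}$. Thus $\lambda_n(H)\to 0$ whenever $d(H)>d(F)$, yielding condition~(c), which combined with Lemma~\ref{th:density_of_pairs} kills the pair-merger contributions appearing in the proof of Proposition~\ref{th:general_weights_second_asymptotics}. For $H=F$ the exponent vanishes so that $\lambda_n(F)$ tends to a positive constant; an explicit evaluation of the monomial coefficient produces the announced value $\lambda(F)=c^{m(F)}/(m(F)!\,n(F)!)$, verifying condition~(a). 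With the three conditions in hand, Proposition~\ref{th:general_weights_second_asymptotics} immediately yields the Poisson limit. The main technical point is verifying the uniform-convergence hypothesis of Theorem~\ref{th:distinguished_multi} on the compound family $He^{cF}$; this is essentially automatic thanks to the monomial structure of $F$ and $H$, and the remainder of the argument reduces to careful bookkeeping of constants.
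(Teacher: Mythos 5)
Your proposal is correct and follows essentially the same route as the paper: choose $\lambda_n(H)=H(n,2m/n^2)$, verify the three hypotheses of Proposition~\ref{th:general_weights_second_asymptotics} via Theorem~\ref{th:distinguished_multi} applied to $He^{cF}$, and use Lemma~\ref{th:density_of_pairs} together with the exponent computation $n^{m(H)(1/d(H)-1/d(F))}$ to handle conditions (a) and (c). Your explicit check of the uniform-convergence hypothesis for the compound family $He^{cF}$ is in fact slightly more careful than the paper's, which invokes Theorem~\ref{th:distinguished_multi} without spelling out the limit function $L(z,w)$.
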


\begin{proof}
Our goal is to get the result by applying Proposition~\ref{th:general_weights_second_asymptotics}. Thus for a multigraph $F$ let 
\begin{equation}\label{G_Gleichung}
F(z,w)=\frac{z^{n(F)}}{n(F)!}\frac{w^{m(F)}}{2^{m(F)}m(F)!}
\end{equation}
and set $\lambda_n(F)=F\left(n,\frac{2m}{n^2}\right)$. Then 
\[
    \lambda(F) = \lim_{n \to \infty}
    F\left(n, \frac{2m}{n^2} \right)
    =
    \frac{c^{m(F)}}{m(F)!\, n(F)!}
\]
which is clearly a positive number. Thus the first condition of Proposition~\ref{th:general_weights_second_asymptotics} is satisfied. 

To show that the second condition of Proposition~\ref{th:general_weights_second_asymptotics} is satisfied, take an arbitrary multigraph $G$ and some constant $\kappa$ and observe that 
\[
\mg_{n,m}^{[Ge^{\kappa F}]} = n!m!2^m [z^nw^m] G(z,w) e^{\kappa F(z,w)} e^z e^{n^2w/2}
\]
where $F(z,w)$ is the function given in \eqref{G_Gleichung} and $G(z,w)$ the analogue for the multigraph $G$. 
By Theorem~\ref{th:distinguished_multi} $\mg_{n,m}^{[Ge^{\kappa F}]}$ can be expressed as 
\[
n!m!2^m [z^nw^m] G(z,w) e^{\kappa F(z,w)} e^z e^{n^2w/2} \sim n^{2m} G\left(n,\frac{2m}{n^2}\right) \exp\left( \kappa F\left(n,\frac{2m}{n^2}\right)\right) = \mg_{n,m} \lambda_n(G) e^{\kappa\lambda_n(F)}
\]
which matches the requirement from Proposition~\ref{th:general_weights_second_asymptotics}. The only task which is left is showing that $\lambda_n(G)$ tends to zero if $d(G)>d(F)$. Looking at \eqref{G_Gleichung} again, we see that (noticing that $\alpha=2-\frac1{d(F)}$) the exponent of $n$ in 
\[
G\left(n,\frac{2m}{n^2}\right) = \frac{c^{m(G)}}{n(G)!m(G)!} n^{n(G)-\frac{m(G)}{d(F)}}
\]
is negative whenever $d(G)>d(F)$. Thus Proposition~\ref{th:general_weights_second_asymptotics} is applicable and yields directly the assertion.
\end{proof}

	\subsubsection{Distribution of the number of occurrences of a subgraph in a simple graph}
	\label{sec:distribution_number_subgraphs}

We now turn to getting information on the distribution of the number of occurrences of a subgraph in a simple graph.
The following theorem was first derived by Bollob\'as~\cite{Bo81}.

\begin{theorem}[Poisson law, strictly-balanced, simple] 
\label{th:limit_law_simple}

Let $F$ denote a strictly balanced graph of density $d$, with $\ell$ edges and $\aut{F}$ automorphisms, and assume $m \sim c n^{2-1/d}$ for some positive constant~$c$.
Then the number of $F$-subgraphs in a random $(n,m)$-graph $G$ follows a Poisson limit law with parameter $\lambda(F) = (2c)^{\ell}/\aut{F}$, \ie for any nonnegative integer $t$,
\[
    \lim_{n \to \infty} \proba(G[F] = t) = \frac{\lambda(F)^t}{t!} e^{-\lambda(F)}.
\]
\end{theorem}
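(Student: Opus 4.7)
The proof will parallel Theorem~\ref{th:limit_law_multi} step by step, substituting Theorem~\ref{th:distinguished_simple} for Theorem~\ref{th:distinguished_multi}. The entire machinery of Section~\ref{sec:general_weights}, in particular Lemma~\ref{th:general_weights_first_asymptotics} and Proposition~\ref{th:general_weights_second_asymptotics}, is purely combinatorial and its simple-graph analog holds by the same arguments, since Proposition~\ref{th:number_of_subgraphs_general_weights} already supplies the patchwork identity $\sg^{\mF}_{n,m,t} = [u^t]\, \sg^{[\Patch{\mF}(u-1)]}_{n,m}$ in the simple-graph setting, and Lemma~\ref{th:density_of_pairs} (strict balancedness $\Rightarrow$ all pair-patchworks are denser than $F$) is stated for (multi)graphs.

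Let $\mF$ denote the family of canonically-labelled simple graphs isomorphic to $F$, so that
$$F(z,w) = \frac{z^{n(F)} w^{\ell}}{\aut{F}}.$$
Set $\lambda_n(G) := G\!\bigl(n,\, m/\tbinom{n}{2}\bigr)$ for any graph $G$. Since $m/\binom{n}{2} \sim 2c n^{-1/d}$ and the balancedness identity $\ell = d \cdot n(F)$ gives $n(F) - \ell/d = 0$, a direct computation yields
$$\lambda_n(F) = \frac{n^{n(F)}}{\aut{F}}\Bigl(\tfrac{m}{\binom{n}{2}}\Bigr)^{\ell} \;\longrightarrow\; \frac{(2c)^{\ell}}{\aut{F}} \;=\; \lambda(F) > 0,$$
verifying the first hypothesis. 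For any fixed graph $H$ and constant $\kappa$, I then apply Theorem~\ref{th:distinguished_simple} to the weighted family with generating function $H(z,w)\, e^{\kappa F(z,w)}$, which is entire in both variables since $F$ and $H$ are finite graphs; the hypothesis $m = o(n^2)$ holds because $d \geq 1$, and uniform convergence of the renormalised series is immediate as the whole expression is a monomial times an exponential of a monomial. The theorem then produces
$$\sg_{n,m}^{[\mathcal{H} e^{\kappa \mF}]} \;\sim\; \binom{\binom{n}{2}}{m}\, H\!\Bigl(n, \tfrac{m}{\binom{n}{2}}\Bigr) \exp\!\Bigl(\kappa\, F\!\bigl(n, \tfrac{m}{\binom{n}{2}}\bigr)\Bigr) \;=\; \sg_{n,m}\, \lambda_n(H)\, e^{\kappa\,\lambda_n(F)},$$
which is precisely the asymptotic factorisation required by hypothesis (ii). Finally, for any graph $G$ with $d(G) > d$, evaluating $\lambda_n(G)$ gives the order $n^{n(G)-m(G)/d}$ whose exponent is strictly negative, so $\lambda_n(G) \to 0$, yielding hypothesis (iii).

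Applying the simple-graph version of Proposition~\ref{th:general_weights_second_asymptotics} then delivers the Poisson limit with parameter $\lambda(F) = (2c)^{\ell}/\aut{F}$. The only minor obstacle, as I see it, is making the verbatim transfer of Lemma~\ref{th:general_weights_first_asymptotics} and Proposition~\ref{th:general_weights_second_asymptotics} from multigraphs to simple graphs fully explicit, especially the bound on the "non-disjoint" term; but this relies only on density comparisons (via $\pair_F$) and the patchwork identity, both of which are stated or apparent in both settings. The edge-merging convention for simple-graph patchworks only strengthens the density inequalities in $\pair_F$ and so does not hinder the argument.
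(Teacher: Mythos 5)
Your proposal is correct and follows essentially the same route as the paper, which likewise proves this theorem by transferring the argument of Theorem~\ref{th:limit_law_multi} to simple graphs via a simple-graph analogue of Proposition~\ref{th:general_weights_second_asymptotics}, setting $\lambda_n(G) := G\bigl(n, m/\tbinom{n}{2}\bigr)$ and invoking the asymptotics of Theorem~\ref{th:distinguished_simple}. Your version is in fact more explicit than the paper's sketch in verifying the three hypotheses and in flagging the transfer of the patchwork/density estimates to the simple-graph setting.
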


\begin{proof}
The result can be shown by a reasoning similar to the one presented in the proof of Theorem~\ref{th:limit_law_multi}. As in the proof of Proposition~\ref{th:general_weights_second_asymptotics} it can be shown that the main contribution in~\eqref{eq:subgraphs} at the threshold $\alpha= 2- 1/d$ comes from disjointed patchworks. Thus an analogue of Proposition~\ref{th:general_weights_second_asymptotics} for simple graphs is true as well. 

Setting $\lambda_n(G):=G\left(n,m/\binom n2\right)$, using $\lambda(F)=\lim\limits_{n\to \infty} F\left(n, m/\binom{n}{2}\right)$ and the asymptotics provided in Theorem~\ref{th:distinguished_simple} completes the proof.
\end{proof}

\subsubsection{Asymptotics of $F$-free (multi)graphs}

Given a sub(multi)graph $F$, let an \emph{$F$-free (multi)graph} be a (multi)graph without $F$-subgraph. 
The two preceding theorems allow in particular the enumeration of $F$-free (multi)graphs for any strictly balanced $F$ at the threshold $m\sim cn^{1-1/d(F)}$.

\begin{corollary}[$F$-free multigraphs]
	Let $F$ denote a strictly balanced multigraph of density $d$ and assume $m \sim c n^{2-1/d}$ for some positive constant~$c$.
Then the number of $F$-free $(n,m)$-multigraphs is given by 
\[
    \mg^{F}_{n,m,0} = e^{-\lambda(F)} \mg_{n,m}, \quad\textrm{ with }\lambda(F) := \frac{c^{m(F)}}{n(F)!m(F)!}.
\]
\end{corollary}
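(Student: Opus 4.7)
The plan is to derive this corollary as an immediate specialization of Theorem~\ref{th:limit_law_multi} to the event $\{G[F] = 0\}$. By that theorem, under the assumptions $F$ strictly balanced and $m \sim c n^{2-1/d}$, the number of $F$-subgraphs $G[F]$ in a uniform random multigraph $G \in \mMG_{n,m}$ converges in distribution to a Poisson random variable with parameter $\lambda(F) = c^{m(F)}/(m(F)!\,n(F)!)$. Setting $t=0$ in the Poisson limit yields
\[
    \lim_{n \to \infty} \proba(G[F] = 0) = e^{-\lambda(F)}.
\]

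Now by definition of the uniform measure on $\mMG_{n,m}$ and of $\mg^F_{n,m,0}$ as the number of $(n,m)$-multigraphs with exactly $0$ copies of $F$, we have
\[
    \proba(G[F] = 0) = \frac{\mg^F_{n,m,0}}{\mg_{n,m}},
\]
so that $\mg^F_{n,m,0} \sim e^{-\lambda(F)} \, \mg_{n,m}$, which is the statement of the corollary (the equality sign must be read as asymptotic equivalence).

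Since the result follows by a one-line reduction, there is essentially no obstacle; the only thing to verify is that the event $\{G[F]=0\}$ is precisely the set of $F$-free $(n,m)$-multigraphs, which is immediate from the definition at the beginning of the subsection. If desired, one can also observe that the same argument combined with the full Poisson limit gives the finer statement $\mg^F_{n,m,t} \sim \frac{\lambda(F)^t}{t!} e^{-\lambda(F)} \mg_{n,m}$ for every fixed nonnegative integer $t$, of which the corollary is the $t=0$ instance.
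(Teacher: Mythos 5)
Your proposal is correct and is exactly the route the paper intends: the corollary is stated as an immediate consequence of Theorem~\ref{th:limit_law_multi} by setting $t=0$ in the Poisson limit and multiplying the limiting probability by $\mg_{n,m}$. Your remark that the equality sign in the statement should be read as an asymptotic equivalence is also the right reading of the paper's (slightly abusive) notation.
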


\begin{corollary}[$F$-free simple graphs]
	Let $F$ denote a strictly balanced graph of density $d$, and $\ell$ edges, and assume $m \sim c n^{2-1/d}$ for some positive constant~$c$.
Then the number of $F$-free $(n,m)$-graphs is given by 
\[
    \sg^{F}_{n,m,0} = e^{-\lambda(F)} \sg_{n,m}, \quad\textrm{ with }\lambda(F) := \frac{(2c)^{\ell}}{\aut{F}}.
\]
\end{corollary}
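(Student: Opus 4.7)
The plan is to derive this statement as an immediate specialization of Theorem~\ref{th:limit_law_simple} to the case $t = 0$. Recall that a random $(n,m)$-graph is drawn uniformly from $\mSG_{n,m}$, so that
\[
    \proba(G[F] = 0) \;=\; \frac{\sg^{F}_{n,m,0}}{\sg_{n,m}},
\]
where by definition $\sg^{F}_{n,m,0}$ counts the $(n,m)$-graphs containing no copy of $F$, i.e.\ the $F$-free $(n,m)$-graphs.

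By hypothesis, $F$ is strictly balanced of density $d$ with $\ell$ edges and $|\operatorname{Aut}(F)|$ automorphisms, and $m \sim c n^{2-1/d}$, so Theorem~\ref{th:limit_law_simple} applies and gives the Poisson limit
\[
    \lim_{n \to \infty} \proba(G[F] = t) \;=\; \frac{\lambda(F)^t}{t!}\, e^{-\lambda(F)}, \qquad \lambda(F) = \frac{(2c)^{\ell}}{\aut{F}}.
\]
Specializing to $t = 0$ yields $\proba(G[F]=0) \to e^{-\lambda(F)}$, which after multiplying both sides by $\sg_{n,m}$ produces exactly the claimed asymptotic equivalence
\[
    \sg^{F}_{n,m,0} \;\sim\; e^{-\lambda(F)}\, \sg_{n,m}.
\]

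There is no real obstacle to overcome here: all analytic work (the patchwork decomposition of Theorem~\ref{th:subgraphs}, the saddle-point estimate of Theorem~\ref{th:distinguished_simple}, and the Poisson convergence via Proposition~\ref{th:general_weights_second_asymptotics}) has already been carried out in the proof of Theorem~\ref{th:limit_law_simple}. The corollary is a one-line consequence obtained by reading off the mass of the limiting Poisson law at $0$, and the proof need only verify the bookkeeping that the ratio $\sg^{F}_{n,m,0}/\sg_{n,m}$ coincides with $\proba(G[F]=0)$ under the uniform model on $\mSG_{n,m}$.
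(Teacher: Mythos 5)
Your proposal is correct and matches the paper's intent exactly: the paper gives no separate proof for this corollary, presenting it as the immediate $t=0$ specialization of Theorem~\ref{th:limit_law_simple}, which is precisely what you do (including the correct reading of the stated equality as an asymptotic equivalence $\sg^{F}_{n,m,0} \sim e^{-\lambda(F)}\,\sg_{n,m}$). Nothing further is needed.
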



		\section{Small subgraphs in multigraphs weighted according to their degrees}
        \label{sec:results-degree-constraints}


We begin with a presentation of our model for multigraphs with degree constraints in Section~\ref{sec:degree-constraints}, then making explicit its relation to the configuration model in Section~\ref{sec:config-Boltzmann}.
Section~\ref{sec:degrees-exact} then gives an exact expression for the total weight of multigraphs with one specified subgraph, from which we derive the expected number of subgraphs.
In the following sections we study the asymptotics of this expected number.
We first consider the case where only a finite number of weights is nonzero (Section~\ref{sec:finite-set-weights}),
then turn to the case when the weights decrease ``fast enough'' with the degrees (Section~\ref{sec:infinite-set-weights}).
There we give a general result (Lemma~\ref{th:distinguished_degree_constraints-asymptotic_infinite-set}) that holds under quite general conditions and from which we obtain various results on the appearance of some classes of subgraphs.
The consideration of infinite families of weights that do not satisfy the conditions of Lemma~\ref{th:distinguished_degree_constraints-asymptotic_infinite-set} is deferred to Section~\ref{sec:failed-assumptions}.

\subsection{Model for multigraphs weighted according to their degrees}
\label{sec:degree-constraints}
 
In many applications, random multigraphs are not necessarily sampled according to the uniform distribution. Instead, we now want to be able to control the degree distribution of a random multigraph. To that effect, we place weights on vertices of a given degree. More formally, to an infinite sequence $\vdelta = (\delta_0, \delta_1, \dots)$ of nonnegative weights,
we associate the following generating function $\Delta(x)$
\[
	\Delta(x) = \sum_{d\ge 0} \delta_d \frac{x^d}{d!}.
\]
Define $\mF_{\Delta}$ as the multigraph family $\mF$
equipped with the following weight:
each vertex of degree $d$ carries a weight $\delta_d$,
and the weight of a multigraph $G$ is the product of the weights of its vertices
\[
	\omega_{\Delta}(G) = \prod_{v\in V(G)} \delta_{\deg(v)} = \prod_{d \ge 0} \delta_d^{n_d(G)},    
\]
where $n_d(G)$ stands for the number of vertices of $G$ with degree $d$.
The total weight of $(n,m)$-multigraphs in $\mF_{\Delta}$ is denoted by $F_{n,m,\Delta}$,
and the generating function of $\mF_{\Delta}$ is
\[
	F_{\Delta}(z,w) = \sum_{G \in \mF} \omega_{\Delta}(G) \frac{w^{m(G)}}{2^{m(G)} m(G)!} \frac{z^{n(G)}}{n(G)!}.
\]
The $\Delta$-multigraphs are the weighted multigraphs from $\mMG_{\Delta}$,
and the \emph{$(n,m,\Delta)$-multigraphs} are
the $\Delta$-multigraphs with $n$ vertices and $m$ edges.
In particular, the total weight $\mg_{n,m,\Delta}$
of the weighted multigraph family $\mMG_{n,m,\Delta}$ is then
\[
	\mg_{n,m,\Delta} =
    \sum_{G\in \mMG_{n,m,\Delta}} \omega_{\Delta}(G) =
    \sum_{G\in \mMG_{n,m,\Delta}} \prod_{v\in V(G)} \delta_{\deg(v)}.
\]

In this weighted model, it will often prove convenient to work with an additional variable $x$ for (labeled) half-edges of the multigraph. Indeed, since  each edge is labeled and oriented, it is represented as a triple $(u,v,e)$ where $u$ and $v$ denote its endpoints, and $e$ its label. This edge can then be cut into two labeled half-edges, one hanging from $u$ with label $2e-1$, the other hanging from $v$ with label $2e$. Cutting all the edges of a multigraph into half-edges, we obtain a bijection between the $(n,m)$-multigraphs and the set of $n$ labeled vertices, each vertex of degree $d$ having $d$ dangling labeled half-edges around it, and so that the total number of half-edges sums up to $2m$. 
This construction, which is reminiscent of the \emph{configuration model} of Bollob\'as~\cite{Bollobas80} and of Wormald~\cite{Wo78} (we explore further the link with this model in Section~\ref{sec:config-Boltzmann} below), yields the following lemma, first derived in \cite{EdPR16}:

\begin{lemma}\label{lem:total_weight}
The total weight of $\mMG_{n,m,\Delta}$ is equal to
\[
  	\mg_{n,m,\Delta} = (2m)![x^{2m}] \Delta(x)^n,
\]
and the generating function of $\mMG_{\Delta}$ is equal to
\[
	\mg_{\Delta}(z,w) =
    \sum_{m\ge 0} (2m)! [x^{2m}] e^{z\Delta(x)} \frac{w^m}{2^m m!}.
\]
\end{lemma}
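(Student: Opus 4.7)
The plan is to exploit the canonical sequential representation of multigraphs, namely that an $(n,m,\Delta)$-multigraph is entirely encoded by a sequence $(v_1,\dots,v_{2m})\in\{1,\dots,n\}^{2m}$ where the edge $e_j=(v_{2j-1},v_{2j})$ is oriented from $v_{2j-1}$ to $v_{2j}$. This matches the half-edge viewpoint: the $2j-1$st and $2j$th positions of the sequence are the two labeled half-edges of edge $j$. Under this bijection, the degree of vertex $k$ is $d_k:=|\{i:v_i=k\}|$, and the total weight of a multigraph is $\prod_k\delta_{d_k}$.

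Therefore
\[
\mg_{n,m,\Delta}=\sum_{(v_1,\dots,v_{2m})\in\{1,\dots,n\}^{2m}}\prod_{k=1}^n\delta_{d_k}=\sum_{\substack{d_1,\dots,d_n\ge 0\\ d_1+\cdots+d_n=2m}}\binom{2m}{d_1,\dots,d_n}\prod_{k=1}^n\delta_{d_k},
\]
where the second equality groups sequences by the induced degree profile. Factoring out $(2m)!$ and recognizing the expansion of $\Delta(x)^n=\bigl(\sum_d \delta_d x^d/d!\bigr)^n$ gives
\[
\mg_{n,m,\Delta}=(2m)!\!\!\sum_{d_1+\cdots+d_n=2m}\prod_{k=1}^n\frac{\delta_{d_k}}{d_k!}=(2m)![x^{2m}]\Delta(x)^n,
\]
which is the first assertion.

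For the second identity, I plug this into the definition of $\mg_\Delta(z,w)$ as an exponential generating function in both $z$ and $w$ (with the $2^m$ factor arising from our convention on oriented edges), and exchange the order of summation:
\[
\mg_\Delta(z,w)=\sum_{n,m\ge 0}\mg_{n,m,\Delta}\frac{z^n}{n!}\frac{w^m}{2^m m!}=\sum_{m\ge 0}(2m)!\frac{w^m}{2^m m!}[x^{2m}]\sum_{n\ge 0}\frac{(z\Delta(x))^n}{n!}.
\]
The inner sum is $e^{z\Delta(x)}$, which yields the claimed expression.

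The argument is essentially a bookkeeping exercise, and the only nontrivial step is the sequence encoding and the resulting multinomial rewriting; once that is in place, both formulas follow from elementary manipulation of formal power series, and no analytic input is needed.
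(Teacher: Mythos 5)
Your proof is correct and rests on the same underlying idea as the paper's: the bijection between $(n,m)$-multigraphs and configurations of $2m$ labeled half-edges (equivalently, the sequence $(v_1,\dots,v_{2m})$), together with the fact that the weight depends only on the degree profile. The paper packages the count via the symbolic method (the family of vertices with attached half-edge sets has generating function $e^{z\Delta(x)}$), whereas you make the same count explicit with multinomial coefficients; this is a stylistic rather than substantive difference.
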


\begin{proof}
The generating function of sets of labeled half-edges is $\Delta(x)$, 
where the set of size $d$ has weight $\delta_d$.
Consider the combinatorial family
composed of sets of vertices such that
\begin{itemize}
\item
to each vertex is attached a set of labeled half-edges,
\item
the weight of the vertex is $\delta_d$
if $d$ half-edges are attached to it,
\item
the weight of the object is the product of the weights of its vertices.
\end{itemize}
Then the generating function of this combinatorial family is $e^{z \Delta(x)}$.
The sum of the weights of the objects from this family
that contain $2m$ half-edges in total is then
\[
	(2m)! [x^{2m}] e^{z \Delta(x)}.
\]
Such objects are in bijection with $\Delta$-multigraphs containing $m$ edges,
according to the discussion before the statement of the lemma.
The generating function of a set of $m$ labeled edges is $\frac{w^m}{2^m m!}$.
Summing over $m$, we obtain the generating function of $\Delta$-multigraphs
\[
	\mg_{\Delta}(z,w) =
    \sum_{m\ge 0} (2m)! [x^{2m}] e^{z\Delta(x)} \frac{w^m}{2^m m!}.\qedhere
\]
\end{proof}

	\subsection{Random $\Delta$-multigraphs and the link with the configuration model.}
    \label{sec:config-Boltzmann}
    
Throughout this section, we compare the distribution of weighted multigraphs given by weighted Boltzmann samplers, denoted by $\mB$, coming from analytic combinatorics, and by the purely probabilistic configuration model, denoted by $\mC$.

In our weighted model, a random $(n,m,\Delta)$-multigraph
is no longer chosen uniformly at random from $\mMG_{n,m}$,
but rather according to its weight:
\[
	\proba(G \in \mMG_{n,m,\Delta}) = \frac{\omega_{\Delta}(G)}{\mg_{n,m,\Delta}}.
\]

\paragraph{Weighted Boltzmann sampling.} One can effectively sample according to this distribution
thanks to weighted Boltzmann samplers, introduced by Duchon \emph{et al.}~\cite{DFLS04}.
Let us give some classical properties of this random sampler.
Define the set $V_\Delta := \{ v_{(d)}\}_{d \ge 0}$, where $v_{(d)}$ is a single vertex with label 1, and $d$ pending half-edges labeled from $1$ to $d$. Let the variable $x$ count the number of half-edges, its weighted generating series is thus $\Delta(x)$.
Given a real positive value $x$,
let $\Gamma_x^{\cal B} V_\Delta$ denote the random sampler for $V_\Delta$
that outputs $v_{(d)}$ with a probability equal to
\[
	\proba(\Gamma_x^{\cal B} V_\Delta = v_{(d)}) = \frac{\delta_d }{\Delta(x)} \frac{x^d}{d!}.
\]

Define the weighted Boltzmann sampler $\Gamma_x^{\cal B} V_\Delta^n$, for the weighted family $\mMG_{n,\Delta} := \bigcup\limits_{m \ge 0} 
\mMG_{n,m,\Delta} $ as follows:
\begin{enumerate}
	\item Make $n$ independent calls to the sampler $\Gamma_x^{\cal B} V_\Delta$ to produce $n$ vertices labeled from $1$ to $n$, with pending labeled half-edges,
    \item If the sum of the degrees is odd, reject and repeat step 1. Otherwise, set $2m$ as the sum of the degrees.
    \item Choose uniformly at random a relabeling of the half-edges such that the labels range from $1$ to $2m$,
    \item For all $j=1\dots m$, create a new directed edge labeled by $j$ by connecting the half-edge with label $2j-1$ to the half-edge with label $2j$.
\end{enumerate}
Hence this sampler outputs a $(n,m,\Delta)$-multigraph $G$ with degree sequence $(d_1,\dots,d_n)$ with probability 
\begin{align*}
	\proba\left(\Gamma_x^{\cal B} V_\Delta^n = G\right) &= 0 \quad \textrm{ if } \sum_i d_i \textrm{ is odd and } \\
	\proba\left(\Gamma_x^{\cal B} V_\Delta^n = G\right) &= 
    \proba\left(v_1 = d_1, \dots v_n =d_n\right)\cdot 
    \proba\left(\Gamma_x^{\cal B} V_\Delta^n = G |v_1 = d_1, \dots v_n =d_n\right) \cdot
    \proba\left( \sum_i d_i \textrm{ even}\right)^{-1} 
    \\
	&= \left(\prod_{i=1}^n \frac{\delta_{d_i} }{\Delta(x)} \frac{x^{d_i}}{d_i!} \right) \cdot 
    {2m \choose d_1,\dots,d_n}^{-1} \cdot 
    \left( \frac{ \Delta(x)^n + \Delta(-x)^n}{2 \Delta(x)^n}\right)^{-1} 
    \\
    &= \frac{2}{\Delta(x)^n + \Delta(-x)^n} \frac{x^{2m}}{(2m)!} \omega_\Delta(G) \quad \textrm{ if } \sum_i d_i \textrm{ is even}.
\end{align*}

When conditioned on having $m$ edges, we obtain a weighted Boltzmann sampler for the weighted family $\mMG_{n,m,\Delta}$ which outputs a $(n,m,\Delta)$-multigraph $G$ with degree sequence $(d_1,\dots,d_n)$ with probability 
\begin{align*}
	\proba(\Gamma_x^{\cal B} V_\Delta^n = G | \Gamma_x^{\cal B} V_\Delta^n \in \mMG_{n,m,\Delta}) 
    &= 
    \frac{\proba(\Gamma_x^{\cal B} V_\Delta^n = G)}{\proba( \Gamma_x^{\cal B} V_\Delta^n \in \mMG_{n,m,\Delta})} 
    = 
    \frac{\proba(\Gamma_x^{\cal B} V_\Delta^n = G)}{\sum_{G' \in\mMG_{n,m,\Delta}} \proba( \Gamma_x^{\cal B} V_\Delta^n = G')}
    \\
	&= \left(\frac{2 \omega_\Delta(G)}{\Delta(x)^n + \Delta(-x)^n}\frac{x^{2m}}{(2m)!} \right) \cdot \left( \sum_{G' \in \mMG_{n,m,\Delta}} \frac{2 \omega_\Delta(G') }{\Delta(x)^n + \Delta(-x)^n}\frac{x^{2m}}{(2m)! }\right)^{-1}\\
    &= \frac{\omega_\Delta (G)}{\sum_{G' \in \mMG_{n,m,\Delta}} \omega_\Delta(G') } =  \frac{\omega_{\Delta}(G)}{\mg_{n,m,\Delta}},
\end{align*}
as desired.

In the unconditioned sampler the number of edges $m$ is a random variable whose distribution depends on the parameter $x$.
If we condition on a particular value $m$
(\ie we start over if the number of edges obtained is not $m$),
then the probability to sample a given $(n,m,\Delta)$-multigraph
is proportional to its weight. 
More precisely, one has
\[
	\esp( \deg{\Gamma_x^\mB V_\Delta} ) = \frac{x \Delta'(x)}{\Delta(x)},
\]
\[
	\mathbb{V}( \deg{\Gamma_x^\mB V_\Delta}  ) = 
    \frac{x^2 \Delta''(x)}{\Delta(x)}
    + \frac{x \Delta'(x)}{\Delta(x)}
    - \left(\frac{x \Delta'(x)}{\Delta(x)}\right)^2.
\]

In order to target $m$ edges, we tune the value $x$ such that the total expected number of half-edges is $2m$, so each vertex receives in average $2m/n$ half-edges.
Because the mean degree of $\Gamma_x^{\mathcal{B}} V_\Delta$ is $\frac{x \Delta'(x)}{\Delta(x)}$,
this corresponds to choosing for $x$ the positive solution of the relation
\begin{equation}
	\label{eq:x-boltzmann}
	\frac{x \Delta'(x)}{\Delta(x)} = \frac{2m}{n}.
\end{equation}
Let $\support(\Delta)$ denote the set of indices
corresponding to nonzero weights
\[
	\support(\Delta) = \{d\ |\ \delta_d > 0\}.
\]
Observe that the equation \eqref{eq:x-boltzmann} has a unique positive solution whenever
\[
	\min(\support(\Delta)) < \frac{2m}{n} < \max(\support(\Delta)),
\]
because the average value of a random variable stays between
its lowest and largest possible values,
and because the function $\frac{x \Delta'(x)}{\Delta(x)}$
is increasing, since its derivative is positive as it equals the variance of $\deg \Gamma_{x}^\mB V_\Delta$ up to a (positive) factor $x$.

\paragraph{Configuration model.} The configuration model has been first introduced by Bollob\'as~\cite{Bollobas80} to sample multigraphs according to a prescribed degree sequence; see also the presentation given by van der Hofstad~\cite[Ch.~7]{vdHof16}. It was later extended to sample multigraphs with a given degree probabilistic distribution. Let us recall briefly the process.

Given a probability distribution $\pi$ over the nonnegative integers, a configuration model sampler $\Gamma_{\pi}^{\mathcal{C}}\mMG_n$ produces a multigraph $G=(V,E)$ with $n$ vertices as follows:
\begin{enumerate}
	\item Draw $n$ independent random variables $X_1,\dots,X_n$ with distribution $\pi$.
    \item Let $\sum X_i := M$: if $M$ is odd, reject and repeat step 1, otherwise let $M := 2m$.
    \item Let $V=\{ v_1, \dots, v_n\}$, where $v_i$ has $X_i$ unordered pending half-edges, and thus $\deg(v_i) = X_i$.
    \item Pick uniformly at random the labels of each half-edges from the set $\{1,\dots,2m\}$.
    \item For all $j = 1 \ldots  m$, pair the half-edge of label $2j-1$ with the half-edge of label $2j$, the new (directed) edge carries label $j$.
\end{enumerate}

\begin{proposition}
Suppose that, for a given real value $x$, the probability distribution $\pi_x$ is defined by:
\[	
	\forall d\ge 0:\quad \pi_x(d) = \frac{\delta_d x^d}{\Delta(x) d!}.
\]
The distributions over $\mMG_n$ under the configuration model with parameter $\pi_x$ and the weighted Boltzmann model with parameter $x$ are equal. More precisely, given a multigraph $G\in \mMG_n$, where $\deg(v_i) =d_i, 1\le i \le n$, we have:
\[ 
	\forall G \in \mMG_n:\quad \proba(\Gamma^{\mathcal{C}}_{\pi_x} \mMG_n = G ) = \proba(\Gamma^{\mathcal{B}}_x V^n_\Delta = G).
\]
\end{proposition}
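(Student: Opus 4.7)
My plan is to directly compute the probability that the configuration model sampler $\Gamma^{\mathcal{C}}_{\pi_x} \mMG_n$ outputs a given multigraph $G \in \mMG_n$ with degree sequence $(d_1,\dots,d_n)$ and $m$ edges, and check by hand that it coincides with the Boltzmann probability
\[
\proba(\Gamma_x^{\mathcal{B}} V_\Delta^n = G) = \frac{2}{\Delta(x)^n + \Delta(-x)^n} \frac{x^{2m}}{(2m)!}\omega_\Delta(G)
\]
that has already been derived in this section. Since the Boltzmann sampler can only output multigraphs whose degrees sum to an even number, and the configuration model explicitly rejects odd sums, both probabilities vanish on the same set, so the only case to treat is $\sum_i d_i = 2m$.

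I would carry out the computation step by step, following the five stages of the configuration model. First, by independence, $\proba(X_1=d_1,\dots,X_n=d_n) = \prod_i \pi_x(d_i) = \frac{x^{2m}}{\Delta(x)^n}\prod_i \frac{\delta_{d_i}}{d_i!}$, which is precisely the joint law of the degrees produced by $n$ independent calls to $\Gamma_x^{\mathcal{B}} V_\Delta$; this is where the choice of $\pi_x$ plays its role. Next, the rejection in Step~2 introduces the same conditioning factor $\left(\frac{\Delta(x)^n+\Delta(-x)^n}{2\Delta(x)^n}\right)^{-1}$ as in the Boltzmann case, because the event ``$\sum X_i$ is even'' depends only on the marginal degree distribution, which is identical under the two models. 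Finally, Steps~3--5 of the configuration model (attach unordered half-edges, label them uniformly in $\{1,\dots,2m\}$, pair $2j-1$ with $2j$) produce a uniformly random multigraph among the $\binom{2m}{d_1,\dots,d_n}$ multigraphs compatible with the given degree sequence, contributing a factor $\frac{\prod_i d_i!}{(2m)!}$; this is exactly the combinatorial factor used in the Boltzmann computation.

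Multiplying the three factors, the $\Delta(x)^n$'s telescope against the conditioning factor, the $d_i!$'s cancel, and I obtain
\[
\proba(\Gamma^{\mathcal{C}}_{\pi_x}\mMG_n = G) = \frac{2}{\Delta(x)^n+\Delta(-x)^n}\frac{x^{2m}}{(2m)!}\prod_i \delta_{d_i} = \frac{2}{\Delta(x)^n+\Delta(-x)^n}\frac{x^{2m}}{(2m)!}\omega_\Delta(G),
\]
which is precisely the Boltzmann probability. There is no real obstacle here: the statement essentially records the fact that $\pi_x$ was designed so that a single draw mimics $\Gamma_x^{\mathcal{B}} V_\Delta$, and the remaining steps (parity rejection and uniform pairing) are identical in the two samplers. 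The only mild care is in confirming that ``picking uniformly at random the labels of the half-edges'' in the configuration model yields the uniform distribution over labelled multigraphs with the prescribed degrees, exactly as Step~3 of the Boltzmann sampler does; once this is granted, the equality of probabilities on each $G$ is an algebraic identity, and summing over all multigraphs compatible with the observed degree sequence then shows that the normalising constants are in agreement.
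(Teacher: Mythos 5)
Your proposal is correct and follows essentially the same route as the paper: both decompose $\proba(\Gamma^{\mathcal{C}}_{\pi_x}\mMG_n = G)$ into the product of the joint degree law $\prod_i \pi_x(d_i)$, the parity-rejection factor $\bigl(\tfrac{\Delta(x)^n+\Delta(-x)^n}{2\Delta(x)^n}\bigr)^{-1}$, and the uniform-pairing factor $\binom{2m}{d_1,\dots,d_n}^{-1}$, and then observe that the product equals the Boltzmann probability already computed. The paper's proof is just this same three-factor computation written in a single display.
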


\begin{proof}
Let $G$ be a $(n,m)$-multigraph with vertices $v_1,\dots,v_n$, where $\deg(v_i) =d_i (1\le i \le n)$, we have
\begin{align*}
	\proba(\Gamma^{\mathcal{C}}_{\pi _x} \mMG_n = G ) 
    &= \proba(X_1 = d_1, \dots, X_d=d_n)\cdot 
    \proba(\Gamma^{\mathcal{C}}_{\pi _x} \mMG_n = G | X_i = d_i, \forall 1\le i \le n) \cdot
    \proba\left(\sum_i d_i \textrm{ even}\right)^{-1}\\
    &= \prod_{i=1}^n \frac{\delta_{d_i} x^{d_i}}{\Delta(x) d_i!} \cdot 
    {2m \choose d_1,\dots, d_n}^{-1} \cdot
    \left( \frac{ \Delta(x)^n + \Delta(-x)^n}{2 \Delta(x)^n}\right)^{-1} 
    = \proba\left(\Gamma^{\mathcal{B}}_x V^n_\Delta = G\right).\qedhere
\end{align*}
\end{proof}

	\subsection{Exact number of subgraphs}
    \label{sec:degrees-exact}

We consider in this part how to extend the exact and asymptotic enumeration results of Section~\ref{sec:subgraphs} to take into account the degree constraints we presented in Section~\ref{sec:degree-constraints}.
Theorem~\ref{th:distinguished_degree_constraints-symbolic} gives an exact expression for the total weight of multigraphs with one distinguished subgraph in the family~$\mF$.
This allows us to obtain the expected number of subgraphs belonging to~$\mF$ and the probability that there are $t$ such subgraphs (Corollaries~\ref{th:deg_constraints_mean} and~\ref{th:deg_constraints_exact_subgraphs}).
 
It will prove convenient to add variables to the generating function of multigraph families
in order to mark the degrees.
Each vertex of degree $d$ is marked by the variable $y_d$,
and the infinite sequence of variables $(y_1, y_2, \ldots)$ is denoted by $\vy$.
The generating function of a multigraph family $\mF$ is then
\begin{equation} \label{eq:gf_deg}
	F(z,w,\vy) = \sum_{G \in \mF} \bigg( \prod_{v \in V(G)} y_{\deg(v)} \bigg) \frac{w^{m(G)}}{2^{m(G)} m(G)!} \frac{z^{n(G)}}{n(G)!}.
\end{equation}
Likewise, the generating function of a simple graph family $\mF$ is
\[
	F(z,w,\vy) = \sum_{G \in \mF} \bigg( \prod_{v \in V(G)} y_{\deg(v)} \bigg) w^{m(G)} \frac{z^{n(G)}}{n(G)!}.
\]
Observe that the generating function of the (multi)graphs from $\mF_{\Delta}$
is equal to the generating function of $\mF$ where $\vy$ is replaced by $\vdelta$:
\[
	F_{\Delta}(z,w) = F(z,w,\vdelta).
\]
The variables $z$ and $w$ are redundant in the generating function $F(z,w,\vy)$,
because each vertex has a degree,
and the sum of the degrees is twice the number of edges.
Thus, given two variables $a$, $b$, we have formally
\begin{equation} \label{eq:Fzwy}
	F(a z, b w, \vy) = F\left(z, w, \left(a b^{d/2} y_d \right)_{d \geq 0} \right)
\end{equation}
Notice that the sum of degrees being even, it ensures that the exponents of $b$ are non-negative integers, defining a proper formal power series.

\begin{theorem}[Distinguished, total weight, multigraphs]
\label{th:distinguished_degree_constraints-symbolic}
Define the operator $ \bar{\partial} $ as
\[
	\bar{\partial} f (x) = (f(x), f' (x), f'' (x), \ldots).
\]
Given a multigraph family $\mF$ with generating function $F(z,w,\vy)$
(with the convention stated in Equation~\eqref{eq:gf_deg}),
the total weight $\mg^{[\mF]}_{n,m,\Delta}$ of $(n,m,\Delta)$-multigraphs
where one $\mF$-subgraph is distinguished
is equal to
\begin{equation} \label{eq:distinguished_degree_constraints}
	\mg^{[\mF]}_{n,m,\Delta} =
	n! 2^m m! [z^n w^m]
    \sum_{j \geq 0}
    (2j)! \, [x^{2j}]
    F \left(z,w,\bar{\partial} \Delta (x) \right)
    e^{z \Delta(x)}
    \frac{w^j}{2^j j!}.
\end{equation}
\end{theorem}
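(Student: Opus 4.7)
I interpret a distinguished pair $(G,F)$, where $G$ is an $(n,m,\Delta)$-multigraph and $F$ is an $\mF$-subgraph of $G$, as the data of an $\mF$-multigraph $F$ (with vertex and edge labels inherited from $G$), the $n - n(F)$ vertices of $V(G) \setminus V(F)$, and the $j := m - m(F)$ edges of $E(G) \setminus E(F)$; the weight of $G$ is the product $\prod_{v \in V(G)} \delta_{\deg_G(v)}$. The aim is to encode this decomposition symbolically using the half-edge representation of Lemma~\ref{lem:total_weight}, and to verify that the resulting generating series matches the right-hand side of Equation~\eqref{eq:distinguished_degree_constraints}.

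The central step is the substitution $y_k \mapsto \Delta^{(k)}(x)$. In the half-edge representation, a vertex $v \in V(G)$ carries $\deg_G(v)$ labeled half-edges and contributes the weight $\delta_{\deg_G(v)}$. Its half-edges split into \emph{internal} ones (those forming edges of $F$) and \emph{external} ones (those forming edges in $E(G)\setminus E(F)$): for $v \in V(F)$ with degree $k$ in $F$ and total degree $d$ in $G$, the $k$ internal half-edges are already encoded by the edges of $F$, while the $d - k$ external half-edges, being labeled, contribute the EGF factor $\frac{x^{d-k}}{(d-k)!}$, where $x$ marks external half-edges. Summing over $d \geq k$ yields
$$\sum_{d \geq k} \delta_d \frac{x^{d-k}}{(d-k)!} = \Delta^{(k)}(x),$$
so that substituting $y_k \mapsto \Delta^{(k)}(x)$ in the series $F(z,w,\vy)$ of Equation~\eqref{eq:gf_deg} produces the generating function of $\mF$-multigraphs where each vertex is enriched with a labeled set of external half-edges and carries the correct vertex weight $\delta_d$. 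Note that the edges of $F$, together with their labels and orientations, remain encoded in the usual factor $\frac{w^{m(F)}}{2^{m(F)} m(F)!}$ inside $F(z,w,\bar{\partial}\Delta(x))$.

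It then remains to attach the $n - n(F)$ extra vertices, each contributing its own labeled half-edges with weight $\Delta(x)$, yielding the factor $e^{z \Delta(x)}$, and to pair the external half-edges into $j$ external edges. As in Lemma~\ref{lem:total_weight}, turning $2j$ labeled half-edges into $j$ labeled, oriented edges corresponds to applying the operator $(2j)!\,[x^{2j}]\cdot \frac{w^j}{2^j j!}$. Summing over $j \geq 0$ and then extracting $n! 2^m m! [z^n w^m]$ produces the announced expression, since the EGF products in $z$ and in $w$ automatically distribute the $n$ vertex labels between $V(F)$ and its complement, and the $m$ edge labels between $E(F)$ and its complement.

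The main bookkeeping to verify is the consistency of the substitution $y_k \mapsto \Delta^{(k)}(x)$ with the labeling conventions: the same exponential series in $x$ must simultaneously encode the vertex weight $\delta_d$ and the EGF of labeled external half-edges, and the total number of external half-edges, summed over $V(G)$, must equal $2m - 2m(F) = 2j$, matching the extraction $[x^{2j}]$ in the formula.
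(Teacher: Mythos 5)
Your proof is correct and follows essentially the same route as the paper: both decompose a distinguished pair $(G,F)$ into the $\mF$-subgraph, a set of additional vertices contributing $e^{z\Delta(x)}$, and external labeled half-edges paired via $(2j)!\,[x^{2j}]\cdot\frac{w^j}{2^j j!}$, with the substitution $y_k\mapsto\Delta^{(k)}(x)$ justified by $\sum_{d\ge k}\delta_d\frac{x^{d-k}}{(d-k)!}=\Delta^{(k)}(x)$ (the paper phrases this as the vertex with $d$ external half-edges and $e$ edges in $F$ carrying weight $\delta_{d+e}$). The only cosmetic difference is that the paper first treats a single isomorphism class $H$ and then sums by linearity, whereas you work with the family $\mF$ directly.
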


\begin{proof}
We combine the half-edges construction of Section~\ref{sec:degree-constraints}
with the proof of Theorem~\ref{th:distinguished_simple}.
Let $H$ denote a multigraph from $\mF$,
with $k$ vertices and $\ell$ edges,
and assume its automorphism group (both on vertices and edges) has size $\aut{H}$.
Then the number of $H$-multigraphs is $2^{\ell} \ell! k! / \aut{H}$,
and the generating function of the $H$-multigraphs~is
\[
	H(z,w,\vy) =
    \frac{1}{\aut{H}} \bigg(\prod_{v \in H} y_{\deg(v)} \bigg) w^k z^{\ell}.
\]
An $(n,m,\Delta)$-multigraph $G$ where an $H$-subgraph is distinguished
can be uniquely decomposed as an $H$-multigraph,
a set of additional vertices,
and a set of labeled half-edges, each linked to a vertex.
The total number of half-edges must be even, say $2 j$.
The weight of any vertex linked to $d$ half-edges and $e$ edges in $H$ is then $\delta_{d+e}$,
and the weight of $G$ is equal to the product of the weights of the vertices. Equivalently, a vertex (without weight) of degree $d$ in $H$ is substituted by a vertex of arbitrary degree (which is at least $d$) with adequate weight and with $d$ distinguished half-edges (discounted because already counted by the variable $w$ in $H(z,w,\cdot)$).
\begin{figure}\label{fig:add_half}
\centering
\includegraphics[width=\linewidth]{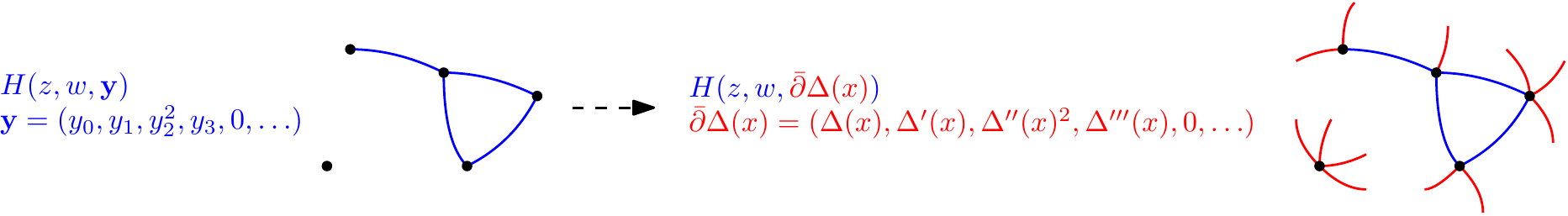}
\caption{A distinguished graph $H$ with some additional half-edges.}
\end{figure}

The generating function of $\Delta$-multigraphs,
where one $H$-subgraph is distinguished, is then
\[
	\mg_{\Delta}^{[\{H\}]}(z,w) =
	\sum_{j \geq 0} (2j)! \,
    [x^{2j}] H(z,w, \bar{\partial} \Delta (x))\,
    e^{z \Delta(x)} \frac{w^j}{2^j j!}.
\]
Using the decomposition
\[
    F(z,w,\vy) =
    \sum_{\text{there is an $H$-graph in $\mF$}}
    H(z,w,\vy).
\]
and extracting the coefficient $n! 2^m m! [z^n w^m]$ concludes the proof.
\end{proof}

We easily derive from Theorem~\ref{th:distinguished_degree_constraints-symbolic} information on the number of occurrences of subgraphs from~$\mF$.

\begin{corollary}[Expected number of subgraphs, degree constraints, multigraphs]
\label{th:deg_constraints_mean}
Given a multigraph family $\mF$,
the expected number of $\mF$-subgraphs in a random $(n,m,\Delta)$-multigraph
is
\[
	\frac{\mg_{n,m,\Delta}^{[\mF]}}{\mg_{n,m,\Delta}} =
    \frac{n! 2^m m! [z^n w^m]
    \sum_{j \geq 0}
    (2j)! \, [x^{2j}]
    F \left(z,w,\bar{\partial} \Delta (x) \right)
    e^{z \Delta(x)}
    \frac{w^j}{2^j j!}}
    {(2m)! [x^{2m}] \Delta(x)^n}.
\]
\end{corollary}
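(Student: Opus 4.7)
The plan is essentially to combine three results already proved in the excerpt, since the corollary is a straightforward consequence of them rather than an independent computation.

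First, I would invoke Proposition~\ref{th:general_weight_mean} instantiated with the weight $\omega = \omega_\Delta$ coming from the degree distribution. This proposition, proved earlier by a double-counting/swap-of-summation argument over pairs $(G, F)$ with $F$ an $\mF$-subgraph of $G$, states that the expected number of $\mF$-subgraphs in a random $(n,m,\omega)$-multigraph equals the ratio $\mg_{n,m,\omega}^{[\mF]}/\mg_{n,m,\omega}$. Applied to $\omega_\Delta$, this immediately gives
\[
\esp(G[\mF]) = \frac{\mg_{n,m,\Delta}^{[\mF]}}{\mg_{n,m,\Delta}},
\]
so the remaining work is just to plug in explicit expressions for numerator and denominator.

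For the numerator, I would directly apply Theorem~\ref{th:distinguished_degree_constraints-symbolic}, which expresses $\mg_{n,m,\Delta}^{[\mF]}$ as the coefficient
\[
n!\, 2^m m!\, [z^n w^m] \sum_{j \geq 0} (2j)!\, [x^{2j}]\, F\!\left(z,w,\bar{\partial}\Delta(x)\right) e^{z\Delta(x)} \frac{w^j}{2^j j!}.
\]
No additional manipulation is needed here, since the theorem is stated in exactly the form required.

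For the denominator, I would use Lemma~\ref{lem:total_weight}, which gives the total weight as $\mg_{n,m,\Delta} = (2m)![x^{2m}]\Delta(x)^n$. Substituting both expressions into the ratio from Proposition~\ref{th:general_weight_mean} yields the claimed formula. There is no real obstacle to overcome: the corollary is a packaging step that combines the general expectation formula with the two enumeration identities specific to the degree-weighted model, and the proof amounts to writing this combination in one line.
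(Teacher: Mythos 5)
Your proposal matches the paper's proof exactly: it invokes Proposition~\ref{th:general_weight_mean} with the weight $\omega_\Delta$ to get the ratio $\mg_{n,m,\Delta}^{[\mF]}/\mg_{n,m,\Delta}$, then substitutes the numerator from Theorem~\ref{th:distinguished_degree_constraints-symbolic} and the denominator from Lemma~\ref{lem:total_weight}. The argument is correct and complete.
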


\begin{proof}
According to Proposition~\ref{th:general_weight_mean},
the average number of $\mF$-subgraphs in a random $(n,m,\Delta)$-multigraph is
\[
	\frac{\mg_{n,m,\Delta}^{[\mF]}}{\mg_{n,m,\Delta}}.
\]
The expression of $\mg_{n,m,\Delta}^{[\mF]}$ has been derived in Theorem~\ref{th:distinguished_degree_constraints-symbolic},
while Lemma~\ref{lem:total_weight} provides the expression of $\mg_{n,m,\Delta}$.
\end{proof}

Corollary~\ref{th:deg_constraints_mean} is useful to
find the threshold for the emergence of a given multigraph $F$
as a subgraph in a random $(n,m,\Delta)$-multigraph,
\ie the rate at which $m = m(n)$ should go to infinity with~$n$
so that a random $(n,m,\Delta)$-multigraph
typically contains a bounded, but positive number of $F$-subgraphs.
In order to derive more information on the limit law of the number of $F$-subgraphs in a random $(n,m,\Delta)$-multigraph,
we extend the notion of patchworks (see Sections~\ref{sec:patchworkdef} and~\ref{sec:patchworks}) to $\Delta$-multigraphs. 
To take into account the degrees, the generating function of patchworks of a multigraph family $\mF$ becomes
\[
	\Patch{\mF}(z,w,\vy,u) =
    \sum_{P \in \mPatch{\mF}}
    u^{|P|}
    \bigg(
    \prod_{v \in V(P)}
    y_{\deg(v)}
    \bigg)
    \frac{w^{m(P)}}{2^{m(P)} m(P)!}
    \frac{z^{n(P)}}{n(P)!},
\]
where $\deg(v)$ denotes the degree of the vertex $v$ in the patchwork $P$.
We also denote by $\mPatch{\mF}(u)$ the set of $\mF$-patchworks,
where the weight of a patchwork $P$ is defined as
\[
	\omega(P) = u^{|P|}.
\]

\begin{corollary}[Probability for number of subgraphs, degree constraints, multigraphs]
\label{th:deg_constraints_exact_subgraphs}
Given a multigraph family $\mF$,
the probability for a random $(n,m,\Delta)$-multigraph
to contain exactly $t$ $\mF$-subgraphs is equal to
\[
	\frac{[u^t] \mg_{n,m,\Delta}^{[\mPatch{\mF}(u-1)]}}{\mg_{n,m,\Delta}},
\]
where the family $\mPatch{\mF}(u-1)$ is defined just before the corollary,
the term $\mg_{n,m,\Delta}^{[\mPatch{\mF}(u-1)]}$
is expressed in Theorem~\ref{th:distinguished_degree_constraints-symbolic},
and the value of $\mg_{n,m,\Delta}$ is provided by Lemma~\ref{lem:total_weight}.
\end{corollary}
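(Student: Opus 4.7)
The plan is to deduce the corollary as a straightforward specialization of the general weighted inclusion-exclusion result already established in Proposition~\ref{th:number_of_subgraphs_general_weights}, combined with the definition of the weighted probability measure stated in Section~\ref{sec_Models}.

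First, I would recall that by definition of the random $(n,m,\Delta)$-multigraph model, for any subset $\mA \subseteq \mMG_{n,m}$ the probability that the random multigraph lies in $\mA$ equals
\[
	\proba(G \in \mA) = \frac{\sum_{G \in \mA} \omega_{\Delta}(G)}{\mg_{n,m,\Delta}}.
\]
Applying this with $\mA$ equal to the set of $(n,m)$-multigraphs that contain exactly $t$ $\mF$-subgraphs, the numerator is precisely the total weight $\mg_{n,m,\omega_\Delta,t}^{\mF}$ as defined in Section~\ref{sec:patchworkdef}.

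Next, I would invoke Proposition~\ref{th:number_of_subgraphs_general_weights} directly with the weight function $\omega = \omega_{\Delta}$. This proposition yields
\[
	\mg_{n,m,\omega_\Delta,t}^{\mF} = [u^t]\, \mg_{n,m,\omega_\Delta}^{[\mPatch{\mF}(u-1)]},
\]
where the right-hand side is the total weight of all $(n,m,\Delta)$-multigraphs equipped with one distinguished $\mF$-patchwork weighted by $(u-1)^{|P|}$. Writing $\mg_{n,m,\Delta}^{[\mPatch{\mF}(u-1)]}$ as a shorthand for $\mg_{n,m,\omega_\Delta}^{[\mPatch{\mF}(u-1)]}$, and dividing by $\mg_{n,m,\Delta}$, the probability statement follows.

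There is essentially no obstacle here, because Proposition~\ref{th:number_of_subgraphs_general_weights} was already proved for an arbitrary weight $\omega$ on multigraphs, and the degree-based weight $\omega_\Delta$ is just a particular instance. The only thing to verify is that the symbolic manipulations of $\mg_{n,m,\omega_\Delta}^{[\mPatch{\mF}(u-1)]}$ are consistent with the definition of $\mPatch{\mF}(u)$ extended with the variables $\vy$ used in Theorem~\ref{th:distinguished_degree_constraints-symbolic}, which is immediate: the patchwork generating function with the marking variable $u$ for the number of pieces is compatible with the vertex-degree weight $\omega_\Delta$, so $\mg_{n,m,\Delta}^{[\mPatch{\mF}(u-1)]}$ admits the closed-form expression obtained by substituting the patchwork generating function in place of $F$ in Theorem~\ref{th:distinguished_degree_constraints-symbolic}. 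Combining these observations yields the stated formula for the probability, with $\mg_{n,m,\Delta}$ given by Lemma~\ref{lem:total_weight}.
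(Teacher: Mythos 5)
Your proposal is correct and follows exactly the paper's own route: the corollary is obtained by applying Proposition~\ref{th:number_of_subgraphs_general_weights} with the weight $\omega = \omega_{\Delta}$ and then normalizing by the total weight $\mg_{n,m,\Delta}$ to convert the weighted count into a probability. The additional remarks you make about compatibility with the degree-marking variables are sound but not needed beyond what the paper's one-line proof already asserts.
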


\begin{proof}
This is a direct application of Proposition~\ref{th:number_of_subgraphs_general_weights}
to multigraphs weighted according to their degrees.
\end{proof}

Corollaries~\ref{th:deg_constraints_mean} and~\ref{th:deg_constraints_exact_subgraphs}
reduce the study of the number of $\mF$-subgraphs
in a random $(n,m,\Delta)$-multigraph
to the asymptotic analysis of $\mg_{n,m,\Delta}^{[\mF]}$,
the total weight of $(n,m,\Delta)$-multigraphs
where one $\mF$-subgraph is distinguished.
This asymptotic analysis widely depends on the weight vector $\vdelta$
and the analytic properties of its generating function $\Delta(x)$.
In the rest of Section~\ref{sec:results-degree-constraints} we start with 
the cases in which either only finitely many weights are nonzero 
or the weights do not increase too fast (this is Lemma~\ref{th:distinguished_degree_constraints-asymptotic_infinite-set} followed by its applications). In Section~\ref{sec:failed-assumptions} some cases in which the conditions of Lemma~\ref{th:distinguished_degree_constraints-asymptotic_infinite-set} fail are discussed.

	\subsection{Finite set of nonzero weights}
    \label{sec:finite-set-weights}

When there is only a finite number of nonzero weights,
$\Delta(x)$ is a polynomial
and the $\Delta$-multigraphs have bounded degrees.
More precisely, since the sum of the degrees is twice the number of edges,
for any $(n,m,\Delta)$-multigraph with nonzero weight,
\[
	n \min(\support(\Delta)) \leq 2m \leq n \max(\support(\Delta)),
\]
which implies that $2m/n$ is bounded
and in $[\min(\support(\Delta)), \max(\support(\Delta))]$.
When $2m/n$ reaches one of those bounds,
then the multigraph is regular
(all its vertices have the same degree).
In particular, if there is only one nonzero weight,
\emph{i.e.}, $\Delta(x)$ is a monomial,
then $2m/n$ is equal to the corresponding degree
and the multigraph is regular.
The analysis of subgraphs in regular multigraphs
has been achieved previously
(we refer the reader to Bollob\'as~\cite{Bollobas80} and to McKay, Wormald and Wysocka~\cite{McKayWormaldWysocka}),
so we omit this case here
and consider $2m/n$ having a limit in $]\min(\support(\Delta)), \max(\support(\Delta))[$.

\begin{lemma}
\label{th:distinguished_degree_constraints-asymptotic_infinite-set}
Consider a weight generating function $\Delta(x)$
of infinite radius of convergence and support $\support(\Delta)$. 
Let the integers $n$ and $m := m(n)$ tend to infinity and assume
\begin{enumerate}
\renewcommand{\labelenumi}{(C\arabic{enumi})}
\item \label{assump:1}
\[
	\inf_{n\ge 0} \frac{2m}{n} > \min(\support(\Delta)), \quad \sup_{n\ge 0} \frac{2m}{n} < \max(\support(\Delta)).
\]
\end{enumerate}
Let $\chi := \chi(n)$ denote the unique positive solution of
\[
 \frac{\chi\Delta'(\chi)}{\Delta(\chi)}=\frac{2m}n.
\]
Consider a multigraph family $\mF$ with generating function $F(z,w,\vy)$,
and let $\mg_{n,m,\Delta}$ denote the total weight of $(n,m,\Delta)$-multigraphs.
Moreover, set
\begin{equation}\label{eq:def_g}
G_n(z,x,t) = F\left(n z,\frac{1}{2 m t^2}, \left( \frac{(x\chi)^d \Delta^{(d)}(x\chi)}{\Delta(x \chi)}\right)_{d\ge 0} \right)
\end{equation}
and for a given $\eps >0$,
\[
L_n(z,x,t)=
\begin{cases}
\displaystyle \frac{G_n(z,x,t)}{G_n(1,1,1)} & \text{ if } t\ge \eps, \\[4mm]
\displaystyle \frac{G_n(z,x,\eps)}{G_n(1,1,1)} & \text{ if } 0\le t\le \eps.
\end{cases}
\]
Assume moreover:
\begin{enumerate}
\renewcommand{\labelenumi}{(C\arabic{enumi})}
\stepcounter{enumi}
\item \label{assump:2} $L_n(z,x,t)$ converges to some limit $L(z,x,t)$, as $n\to\infty$ and uniformly 
for $|z|=1$, $|x|=1$ such that $\arg x\le \eps$ and $t$ in any compact sub-interval of $[0,\infty)$. 
Moreover, $L_n(z,x,t)$ is uniformly bounded for $z$ and $t$ as above and $|x|=1$. 
\item \label{assump:3} For all $0<\eps<\frac\pi2,$ we have 
\[
\int_{-\pi}^\pi \frac{\Delta\left(\chi e^{i\theta}\right)^n}{e^{2mi\theta}}\de \theta \sim \int_{-\eps}^{\eps} \frac{\Delta\left(\chi e^{i\theta}\right)^n}{e^{2mi\theta}}\de \theta, 
\]
as $n\to\infty$. 
\end{enumerate}
Then the number $\mg_{n,m,\Delta}^{[\mF]}$ of $(n,m,\Delta)$-multigraphs with one distinguished 
$\mF$-subgraph is asymptotically equal to 
$
    \mg_{n,m,\Delta} \cdot G_n(1,1,1).
$
\end{lemma}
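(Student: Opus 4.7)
The plan is to carry out a multivariate saddle-point analysis of the exact expression for $\mg^{[\mF]}_{n,m,\Delta}$ provided by Theorem~\ref{th:distinguished_degree_constraints-symbolic} and compare it to the corresponding analysis of $\mg_{n,m,\Delta}$ from Lemma~\ref{lem:total_weight}. The natural saddle radius in the variable $x$ is $\chi$, which under assumption~(C1) is the unique positive solution of $\chi\Delta'(\chi)/\Delta(\chi)=2m/n$ (monotonicity of this function was already noted in Section~\ref{sec:config-Boltzmann}). I would take $|x|=\chi$, $|z|=1$, and an appropriate contour in $w$, together with an auxiliary variable handling the weighted sum over $j$.

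The first concrete step is to eliminate the sum over $j$ in Theorem~\ref{th:distinguished_degree_constraints-symbolic} via the Gaussian moment identity $(2j)!/(2^j j!)=\esp[Z^{2j}]$ for a standard normal $Z$, which yields
\[
\sum_{j\ge 0}(2j)!\,[x^{2j}]A(x)\,\frac{w^j}{2^jj!} \;=\; \esp\!\left[\frac{A(Z\sqrt w)+A(-Z\sqrt w)}{2}\right]
\]
with $A(x)=F(z,w,\bar{\partial}\Delta(x))\,e^{z\Delta(x)}$. After this transformation the variable $x$ is effectively replaced by a Gaussian of variance $w$, and the three-fold coefficient extraction reduces to a pair of Cauchy integrals in $z$ and $w$ coupled to a one-dimensional Gaussian expectation. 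The real parameter $t$ appearing in the definition of $G_n(z,x,t)$ plays the role of this Gaussian variable, rescaled so that $Z\sqrt w$ becomes $x\chi$.

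Next, I would perform the substitution $x\mapsto\chi x$ and apply the formal identity~\eqref{eq:Fzwy} to pull all scaling out of the arguments of $F$. This rewrites $F(z,w,\bar{\partial}\Delta(\chi x))$ in a form whose arguments match those of $G_n(z,x,t)$, namely
\[
F\!\left(nz,\;\frac{1}{2mt^2},\;\left(\frac{(\chi x)^d\Delta^{(d)}(\chi x)}{\Delta(\chi x)}\right)_{d\ge 0}\right).
\]
Absorbing the prefactor $n!2^m m!$ via Stirling's formula and the contour radii into the Gaussian normalization, one finds that the full integrand for $\mg^{[\mF]}_{n,m,\Delta}$ factors as $G_n(z,x,t)$ times exactly the integrand that produces $\mg_{n,m,\Delta}$ via Lemma~\ref{lem:total_weight}. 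Dividing by the constant $G_n(1,1,1)$ turns this first factor into $L_n(z,x,t)$.

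The asymptotic conclusion then follows from the usual saddle-point localization. Assumption~(C3) forces the angular integration in $\theta$ (parametrizing $x=\chi e^{i\theta}$) to concentrate on a shrinking neighborhood of $\theta=0$, while (C2) provides both the uniform convergence $L_n\to L$ with $L(1,1,1)=1$ and the uniform bound needed to pass the limit inside the integral. Combining these yields $\mg^{[\mF]}_{n,m,\Delta}\sim G_n(1,1,1)\cdot\mg_{n,m,\Delta}$. The main obstacle I anticipate is the region where the Gaussian variable $t$ is close to $0$: there the factor $F(nz,1/(2mt^2),\cdot)$ may blow up, which is precisely why $L_n$ is defined piecewise in the statement. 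I would control this region by invoking the uniform bound from~(C2) and using the Gaussian tail (of total mass $O(\varepsilon)$) to show that its contribution is asymptotically negligible. A secondary concern is verifying that the tails of the $z$- and $w$-contours are subdominant, which should follow from the standard saddle-point decay together with the concentration provided by~(C3).
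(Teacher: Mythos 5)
Your proposal is correct and follows essentially the same route as the paper's proof: the Gaussian-moment identity to linearize the sum over $j$, the rescalings $x\mapsto\chi x$, $t\mapsto\sqrt{2m}\,t$ aligning the integrand with $G_n$, localization of the $x$-integral via~(C\ref{assump:3}), uniform convergence of $L_n$ via~(C\ref{assump:2}), and separate treatment of small $t$. The only imprecision is your claim that the $t\in[0,\eps]$ region has ``Gaussian tail of total mass $O(\eps)$'': after the rescaling the relevant measure is $t^{2m}e^{-mt^2}\,\de t$, concentrated at $t=1$, so the bad region actually contributes $O(\eps^{2m})$, which is negligible against the main term $\Theta(\sqrt{m}\,e^{-m})$ precisely when $\eps<1/\sqrt{e}$ --- the mechanism you invoke (uniform bound from~(C\ref{assump:2}) times small measure) is the right one, but the quantitative estimate needs this sharper form.
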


\begin{rem}
The conditions of this lemma may appear to be very strong requirements.
Nevertheless there are some ``natural'' assumptions on the weight sequence under which the conditions of Lemma~\ref{th:distinguished_degree_constraints-asymptotic_infinite-set} are satisfied: Assume that the weights are bounded in the sense 
\begin{equation}\label{eq:bound_weight}
	\delta_j\le K^j
\end{equation}
for some positive constant $K$ and that we do not have any periodicities, \ie the set of allowed degrees is not a subset of some lattice $r+\ell\mathbb N$ with integers $r\ge 0$ and $\ell\ge 2$. 
The bound~\eqref{eq:bound_weight} guarantees that $\Delta(x)$ has infinite radius of convergence. If we additionally assume that $\ell\mathbb N\subseteq \support(\Delta)$ for some positive integer $\ell$, then $\Delta(x)$ is a linear combination of terms of the form $e^{Kx+2\pi i/\ell}$ plus smaller order terms and thus behaves essentially like $e^{Kx}$. 
This implies that locally around $x=1$, all terms of the form $x^d\Delta^{(d)}(x\chi)/\Delta(x\chi) $ converge uniformly even if $\chi$ tends to infinity. 
Consequently, Condition~(C\ref{assump:2}) is satisfied. 
To show Condition~(C\ref{assump:3}), we first observe that the aperiodicity condition guarantees that the only region that matters is $x$ near 1. That means that we need the uniform convergence of Condition~(C\ref{assump:2}) only for $x$ near 1. 
Furthermore, note that $e^{Kx}$ is Hayman-admissible (\emph{cf.} \cite{Hay56}), which essentially means that the saddle point method directly applies. 
Hayman's proof uses certain estimates of Cauchy-like integrals. 
One of them straight-forwardly implies Condition~(C\ref{assump:3}), if there is no periodicity. 

In case of periodicity, we can still apply Lemma~\ref{th:distinguished_degree_constraints-asymptotic_infinite-set} in a slightly modified form. 
Then there are more, say $s$, crucial regions, one near 1 and the others distributed along a regular $s$-gon. 
Conditions~(C\ref{assump:2}) and~(C\ref{assump:3}) have to be extended accordingly. 
Cases where periodicity occurs will be discussed in Section~\ref{sec:failed-assumptions}.
\end{rem}

\begin{proof}
Recall Equation~\eqref{eq:distinguished_degree_constraints}:
\begin{equation*} 
\mg_{n,m,\Delta}^{[\mF]} = n! 2^m m! [z^n w^m]
    \sum_{j \geq 0}
    (2j)! \, [x^{2j}]
    F \left(z,w,\bar{\partial} \Delta (x) \right)
    e^{z \Delta(x)}
    \frac{w^j}{2^j j!}.
\end{equation*} 
Using the classical formula (moments of even order of the standard normal distribution)
\[
	\frac{1}{\sqrt{2\pi}}
    \int_{-\infty}^{\infty} t^{2j} e^{-t^2/2} \de t= \sqrt\frac2\pi \int_0^\infty t^{2j} e^{-t^2/2} \de t 
     = \frac{(2j)!}{2^j j!},
\]
we rewrite the sum on the right-hand side of \eqref{eq:distinguished_degree_constraints} as 
\begin{align*}  
\sum_{j \geq 0} \frac{(2j)!}{2^j j!} 
    [x^{2j}] F(z,w,\bar{\partial} \Delta (x))
    e^{z \Delta(x)} w^j 
&=\sum_{j \geq 0}
    \sqrt\frac2\pi
    \int_0^{\infty}
    w^j t^{2j} e^{-t^2/2} \de t
    [x^{2j}] F(z,w,\bar\partial \Delta(x))
    e^{z \Delta(x)} 
    \\
&= \sqrt\frac2\pi
    \int_0^{\infty}
    \left( \sum_{j\ge 0} [x^{2j}]F(z,w,\bar\partial\Delta(x)) (t\sqrt{w})^{2j} \right) 
    e^{z \Delta(t\sqrt{w})}
    e^{-t^2/2} \de t    
	\\
&= \sqrt\frac2\pi
    \int_0^{\infty}
    F(z,w,(\bar\partial\Delta)(t\sqrt{w}))
    e^{z \Delta(t\sqrt{w})}
    e^{-t^2/2} \de t,
\end{align*} 
where interchanging the sum and the integral is licit, 
because for sufficiently small $x$ the series converges uniformly. For the evaluation of the sum in
the last step we used the fact that the series $F$ has only even powers of $x$. This follows from
the fact that $x$ counts the number of half-edges in the graph, which is always even (\emph{cf.}
Theorem~\ref{th:distinguished_degree_constraints-symbolic}).
%
To obtain $\mg_{n,m,\Delta}^{[\mF]}$, we extract the coefficient $[z^n w^m]$ of the previous expression and multiply by $n! 2^m m!$:
\begin{align}\label{eq:dist_finite_coeff}
	\mg_{n,m,\Delta}^{[F]} =
	n! \frac{2^{m+\frac12}}{\sqrt\pi} m!  [z^n w^m]
    \int_0^{\infty}
    F(z,w,(\bar\partial\Delta)(t\sqrt{w}))
    e^{z \Delta(t \sqrt{w})}
    e^{-t^2/2} \de t.
\end{align}
Evaluation of the coefficients of $z^n$ and $w^m$ is based on the saddle-point method. This leads to 
\[
\mg_{n,m,\Delta}^{[\mF]} =
\frac{n! 2^{m+\frac12} m!}{\sqrt\pi(2\pi i)^2} \int_0^{\infty} 
\int_{|w|=g(t)} \int_{|z|=n/\Delta(t\sqrt w)} \frac{F(z,w,(\bar\partial\Delta)(t\sqrt{w})) e^{z
\Delta(t \sqrt{w})}}{z^{n+1} w^{m+1}}\de z\de w\, e^{-t^2/2} \de t, 
\]
where 
\[
g(t) = \begin{cases} \chi^2/t^2 & \text{ if } t\ge \eps, \\
\chi^2/\eps^2 & \text{ if } 0\le t\le\eps,
\end{cases}
\]
and by Condition (C1) the saddle-point $\chi$ remains in a compact subinterval of $]0, + \infty[$.
To simplify we apply successively the following changes of variables: 
\[
	z \to \frac{n z}{\Delta(\sqrt{w} t)},
    \qquad
    w = (x \chi /t)^2,
    \qquad
    t \to \sqrt{2m} t.
\]
The expression becomes 
\begin{align} 
\frac{\mg_{n,m,\Delta}^{[\mF]}}{G(1,1,1)}&=\frac{n!2^{m+\frac12} m! (2m)^{m+\frac12}}{n^n \chi^{2m}\sqrt\pi (2\pi i)^2}
\left(\int_\eps^{\infty} 
\int_{|x|=1} \int_{|z|=1} \frac{L_n(z,x,t) e^{nz} \Delta(x\chi)^n}{z^{n+1} x^{2m+1}}\de z\de x\,
t^{2m}e^{-mt^2} \de t\right. \nonumber \\
&\left.\qquad+ \eps^{2m} \int_0^\eps 
\int_{|x|=1} \int_{|z|=1} \frac{L_n(z,x,t) e^{nz} \Delta(x\chi)^n}{z^{n+1} x^{2m+1}}\de z\de x \,
e^{-mt^2} \de t\right). \label{Gl_1}
\end{align} 

Since $L_n(z,x,t)$ is uniformly bounded, by Condition~(C\ref{assump:3}) truncating the circle $|x|=1$ to $|x|=1, \arg(x)\le \eps$ in \eqref{Gl_1} causes an asymptotically negligible error. Next, we start with the evaluation of the inner integral. Note that $\arg(x)\le \eps$ and thus Condition~(C\ref{assump:2}) applies: $L_n(z,x,t)$ converges uniformly to $L(z,x,t)$. Thus we obtain 
\begin{equation}\label{Gl_2}
\frac1{2\pi i}\int_{|z|=1} \frac{L_n(z,x,t) e^{nz}}{z^{n+1}} \de z \sim \frac1{2\pi i}\int_{|z|=1}
\frac{L(z,x,t) e^{nz}}{z^{n+1}} \de z \sim L(1,x,t) [z^n]e^{nz} = L(1,x,t) \frac{n^n}{n!}, 
\end{equation}
where we used Lemma~\ref{th:large_powers} in the last asymptotic estimate. 

The next integral is 
\begin{equation}\label{Gl_3}
\frac1{2\pi i} \int_{|x|=1} \frac{L(1,x,t)\Delta(x\chi)^n}{x^{2m+1}}\de x \sim \frac1{2\pi i}
\int_{|x|=1, \arg(x)\le \eps} \frac{L(1,x,t)\Delta(x\chi)^n}{x^{2m+1}}\de x
\sim L(1,1,t)[x^{2m}]\Delta(x\chi)^n
\end{equation}
which follows from the analyticity of $L(1,x,t)$, Condition~(C\ref{assump:3}) and the Taylor expansion  
$L(1,x,t)\sim L(1,1,t)+L_x(1,1,t)(x-1)$ together with $x-1=\bigO(\eps)$.  

For the last integral we use the Laplace method (see \cite[Theorem~B.7]{FS09}). 
First observe that the function $t\mapsto L(1,1,t)$ is positive and decreasing on the positive real line and
that $t^{2m} e^{-mt^2}$ attains its maximum at $t=1$. Thus, \cite[Theorem~B.7]{FS09} tells us that 
\begin{align}
\int_\eps^\infty L(1,1,t)t^{2m} e^{-mt^2} \de t &\sim L(1,1,1) \int_\eps^\infty t^{2m} e^{-mt^2} \de
t \nonumber \\ 
&\sim L(1,1,1) \int_0^\infty t^{2m} e^{-mt^2} \de t =
L(1,1,1) \sqrt{\frac\pi 2} \frac{1}{(2m)^{m+\frac12}}\frac{(2m)!}{2^m m!}, \label{Gl_4}
\end{align}
as $m\to\infty$ and for $\eps<1$. Finally, we have to show that  
\[
\int_0^\eps L(1,1,t)t^{2m} e^{-mt^2} \de t
\]
is negligible. Note that $L_n(1,1,t)$ is constant on the integration interval, and so its
limit $L(1,1,t)$ is constant as well. The exponential is bounded by 1 and thus the integral is
$O(\eps^{2m})$. Since $\int_0^\infty t^{2m} e^{-mt^2} \de t=\Theta(\sqrt m e^{-m})$, $\eps^{2m}$
is negligible for $\eps<1/\sqrt{e}$. 

Collecting the contributions from \eqref{Gl_2}--\eqref{Gl_4} and inserting them into \eqref{Gl_1} gives finally 
\[
\frac{\mg_{n,m,\Delta}^{[\mF]}}{G(1,1,1)}\sim (2m)! [x^{2m}]\Delta(x)^n = \mg_{n,m,\Delta}
\]
which concludes the proof.
\end{proof}

In particular, if all multigraphs from $\mF$
have maximal degree $d$ greater than $\max(\support(\Delta(x)))$,
then $\Delta^{(d)}(\chi)$ vanishes,
so the lemma claims that $\mg_{n,m,\Delta}^{[\mF]}$ tends to $0$.
Indeed, $\mg_{n,m,\Delta}^{[\mF]}$ vanishes,
because the maximum degree of a subgraph of a multigraph $G$
is never greater than the maximum degree of $G$.

\begin{theorem}[Finite set of weights, trees and cycles]
\label{th:finite_trees_cycles}
Consider a polynomial $\Delta(x)$,
integers $n$ and $m := m(n)$
going to infinity such that
$2m/n$ has a limit in $]\min(\support(\Delta(x))), \max(\support(\Delta(x)))[$,
the unique positive solution $\chi$ of the equation
\[
	\frac{\chi \Delta'(\chi)}{\Delta(\chi)} =
    \lim_{n \to \infty} \frac{2m}{n},
\]
and a random $(n,m,\Delta)$-multigraph $G$.
We also assume that $\Delta(x)$ is aperiodic, meaning that
\[
	\gcd \{d_1 - d_2\ |\ \delta_{d_1} \neq 0, \delta_{d_2} \neq 0\} = 1
\]
(the periodic case is treated in Section~\ref{sec:failed-regular}).
\begin{itemize}
\item[i)]
Any connected multigraph
that is neither a tree nor a unicycle
is asymptotically almost surely not a subgraph of $G$.
\item[ii)]
Let $T$ be a tree with $k$ vertices and $\aut{T}$ automorphisms.
Then the expected number of $T$-subgraphs in $G$ is asymptotically equal to
\[
	\frac{1}{\aut{T}}
    \frac{n^k}{(2m)^{k-1}}
    \prod_{v \in V(T)} \frac{\chi^{\deg(v)} \Delta^{(\deg(v))}(\chi)}{\Delta(\chi)}.
\]
\item[iii)]
Denoting by $C_\ell$ a cycle of length $\ell$, 
then $G[C_\ell]$ follows asymptotically a Poisson law of mean
\[
    \frac{1}{2\ell}
    \left( \frac{n}{m}
    \frac{\chi^2 \Delta''(\chi)}{\Delta(\chi)}
    \right)^\ell.
\]
\end{itemize}
\end{theorem}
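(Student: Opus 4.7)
The plan is to deduce all three parts from Lemma~\ref{th:distinguished_degree_constraints-asymptotic_infinite-set} (used to compute expected subgraph counts) together with Proposition~\ref{th:general_weights_second_asymptotics} (used to upgrade to the Poisson law in~(iii)); the latter applies verbatim to the weight $\omega_\Delta$ since its proof uses no special properties of the weight. The hypotheses of Lemma~\ref{th:distinguished_degree_constraints-asymptotic_infinite-set} are satisfied under the theorem's assumptions: $\Delta(x)$ is a polynomial and thus entire; condition~(C1) is the assumed limit of $2m/n$ strictly inside the support interval; condition~(C2) holds because the generating functions of the families at hand (isomorphism classes of a fixed multigraph, and exponentials built from them) are polynomials or exponentials of polynomials in $z,w,\vy$ and so converge uniformly on compacts after the lemma's rescalings; condition~(C3) follows from the aperiodicity assumption via a standard saddle-point argument, ensuring that $|\Delta(\chi e^{i\theta})|$ attains its maximum uniquely at $\theta=0$.

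For parts~(i) and~(ii), let $\mF$ be the isomorphism class of a fixed connected multigraph $F$ with $k$ vertices and $\ell$ edges. Its generating function is $F(z,w,\vy) = \tfrac{1}{\aut{F}}\, z^k w^\ell \prod_{v\in V(F)} y_{\deg(v)}$, so Corollary~\ref{th:deg_constraints_mean} combined with Lemma~\ref{th:distinguished_degree_constraints-asymptotic_infinite-set} yields
\[
\esp\, G[F] \;\sim\; \frac{1}{\aut{F}}\, \frac{n^k}{(2m)^\ell}\, \prod_{v\in V(F)} \frac{\chi^{\deg(v)}\, \Delta^{(\deg(v))}(\chi)}{\Delta(\chi)}.
\]
Under~(C1), $m = \exactbigO(n)$ and the degree product is bounded, so this expression is of order $n^{k-\ell}$. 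When $F$ is connected but neither a tree nor a unicycle, we have $\ell \geq k+1$, so the expected count vanishes and Markov's inequality gives~(i). For a tree $T$ we have $\ell = k-1$ and the displayed formula is precisely the expression stated in~(ii). Applied to $F = C_\ell$ (where $k=\ell$ and every vertex has degree~$2$), the same computation identifies the asymptotic mean which will play the role of the Poisson parameter in~(iii).

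For~(iii), we apply Proposition~\ref{th:general_weights_second_asymptotics} to $F := C_\ell$ and $\omega := \omega_\Delta$, defining $\lambda_n(H) := \mg_{n,m,\Delta}^{[H]}/\mg_{n,m,\Delta}$ for every multigraph $H$, so that $\lambda_n(C_\ell)$ equals the asymptotic mean derived in the previous paragraph. The first hypothesis (positive limit) is immediate from that computation. The third hypothesis ($\lambda(G)=0$ when $d(G)>d(C_\ell)=1$) follows from part~(i). For the second hypothesis, observe that the compound family $\{H\} \ast \mathtt{Set}(c\{C_\ell\})$ has generating function $H(z,w,\vy)\,\exp(c\, C_\ell(z,w,\vy))$, and applying Lemma~\ref{th:distinguished_degree_constraints-asymptotic_infinite-set} to it yields $\mg_{n,m,\Delta}^{[H e^{cC_\ell}]} \sim \lambda_n(H)\, e^{c\lambda_n(C_\ell)}\, \mg_{n,m,\Delta}$, precisely as required. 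Since every proper subgraph of a cycle is a forest of density strictly less than $1 = d(C_\ell)$, the cycle $C_\ell$ is strictly balanced, so Proposition~\ref{th:general_weights_second_asymptotics} delivers the Poisson limit law. The main technical point to verify is condition~(C2) of the lemma for this exponential-type family: since $C_\ell(z,w,\vy)$ is polynomial in its arguments, the exponential factor remains analytic and uniformly bounded on the compact contours used in the lemma's saddle-point analysis, so uniform convergence carries through.
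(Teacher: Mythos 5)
Your proposal follows essentially the same route as the paper: parts (i) and (ii) are obtained by applying Lemma~\ref{th:distinguished_degree_constraints-asymptotic_infinite-set} (with (C3) supplied by aperiodicity and (C2) by polynomiality of the relevant generating functions) to read off the expected counts, and part (iii) by feeding the exponential family $He^{cC_\ell}$ back into that lemma to verify the hypotheses of Proposition~\ref{th:general_weights_second_asymptotics}. The only cosmetic differences are that you argue $\ell\ge k+1$ directly for a connected non-tree, non-unicyclic multigraph where the paper passes to a densest subgraph, and that you take $\lambda_n$ to be the exact ratio $\mg^{[H]}_{n,m,\Delta}/\mg_{n,m,\Delta}$ rather than the evaluated generating function; both variants are equivalent here.
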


\begin{proof}

i) Any connected multigraph $H$ that is neither a tree nor a cycle
has a essential density greater than~$1$.
Let $F$ denote a subgraph of $H$ of density greater than~$1$.
Then, when $2m/n$ and $\chi$ have finite limits,
\[
	F \left(n, \frac{1}{2m}, \left( \frac{\chi^d \Delta^{(d)}(\chi)}{\Delta(\chi)} \right)_{d \geq 0} \right)
\]
tends to zero.
According to Proposition~\ref{th:general_weight_mean},
the expected number of $F$-subgraphs in a random $(n,m,\Delta)$-multigraph is
\[
	\frac{\mg_{n,m,\Delta}^{[F]}}{\mg_{n,m,\Delta}}.
\]
Condition~(C\ref{assump:1}) of Lemma~\ref{th:distinguished_degree_constraints-asymptotic_infinite-set} is satisfied.
The polynomial $F(z,w,\vy)$ satisfies Condition~(C\ref{assump:2}).
Finally, Condition~(C\ref{assump:3}) is satisfied by the aperiodic polynomial $\Delta(x)$,
by application of the Daffodil Lemma \cite{FS09}.
Applying Lemma~\ref{th:distinguished_degree_constraints-asymptotic_infinite-set},
we obtain
\[
	\mg_{n,m,\Delta}^{[F]} \sim 
    F \left(n, \frac{1}{2m}, \left( \frac{\chi^d \Delta^{(d)}(\chi)}{\Delta(\chi)} \right)_{d \geq 0} \right)
    \mg_{n,m,\Delta},
\]
so the expected number of $F$-subgraph in a random $(n,m,\Delta)$-multigraph $G$ tends to $0$.
Therefore, with high probability, $G$ contains no $H$-subgraph.

ii) The second result of the theorem is also a direct consequence of Lemma~\ref{th:distinguished_degree_constraints-asymptotic_infinite-set}.

iii) The last result is proven by application of Proposition~\ref{th:general_weights_second_asymptotics}.
We chose for $\alpha_n$ the function
\[
	\alpha_n(F) = F \left(n, \frac{1}{2m}, \left( \frac{\chi^d \Delta^{(d)}(\chi)}{\Delta(\chi)} \right)_{d \geq 0} \right).
\]
When $C$ is a cycle, its number of vertices is the same as its number of edges, so
\[
	\alpha_n(C) = C \left(n, \frac{1}{2m}, \left( \frac{\chi^d \Delta^{(d)}(\chi)}{\Delta(\chi)} \right)_{d \geq 0} \right)
\]
has a finite limit (as $m$ is proportional to $n$, and $\chi$ has a finite limit). 
Consequently, the first condition of Proposition~\ref{th:general_weights_second_asymptotics}
is satisfied.
The second condition is obtained by application of Lemma~\ref{th:distinguished_degree_constraints-asymptotic_infinite-set},
and the third condition is trivial.
Thus, the number of cycles of length $\ell$ follows a Poisson limit law
of parameter
\[
	\alpha_n(C) =
    \frac{1}{2\ell} \left( \frac{n}{m}
    \frac{\chi^2 \Delta''(\chi)}{\Delta(\chi)}
    \right)^\ell.\qedhere
\]
\end{proof}

\begin{rem}
The case of $p$-regular multigraphs, which is a trivial example of periodic $\Delta$, can be dealt with in a simple way. Notice that $\Delta(x) = \frac{x^p}{p!}$, and thus we have
\[
	\frac{x^d \Delta^{(d)}(x)}{\Delta(x)} = p(p-1)\dots(p-d+1) = \frac{p!}{(p-d)!}
\]
for $0 \le d\le p$
These quantities do not depend on $x$.
Hence, the expression of $G(z,x,t)$ becomes
\[
	G(z,x,t) = F\left(nz, \frac{1}{2mt^2}, \left(\mu_d\right)_{d\ge 0}\right)  \quad 
    \textrm{ where } \mu_d = \begin{cases} 
    	p(p-1)\dots(p-d+1), &\forall 0\le d \le p,\\
        0, &\forall d > p.
    \end{cases}
\]
Condition~(C\ref{assump:2}) of Lemma~\ref{th:distinguished_degree_constraints-asymptotic_infinite-set} is trivially satisfied, while Conditions~(C\ref{assump:1}) and (C\ref{assump:3}) are no longer needed. It suffices to notice that the total number of half-edges must be a multiple of $p$ and that
\[
	[x^{pn}] \Delta^n(x) = \frac{1}{(p!)^n}.
\]

\end{rem}

\begin{proposition}
Let $\mF$ be a multigraph family with generating function $F(z,w,\vy)$, and let $\mathrm{Reg}_{p,n}$ denote the total weight of $p$-regular multigraphs with $n$ vertices (and $pn/2$ edges). The expected number of copies of elements of $\mF$ is asymptotically equal to
\[
	\frac{\mathrm{Reg}^{[\mF]}_{p,n}}{\mathrm{Reg}_{p,n}} \sim F\left(n, \frac{1}{np}, \left(\mu_d)_{d\ge 0}\right)\right) \quad 
    \textrm{ where } \mu_d = \begin{cases} 
    	p(p-1)\dots(p-d+1), &\text{ if } 0\le d \le p,\\
        0, &\text{ if } d > p.
    \end{cases}
\]
\end{proposition}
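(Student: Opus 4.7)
My approach is to observe that for the monomial weight generating function $\Delta(x) = x^p/p!$, Theorem~\ref{th:distinguished_degree_constraints-symbolic} reduces to a purely algebraic calculation, so no saddle-point analysis is needed. The hypotheses (C\ref{assump:1})--(C\ref{assump:3}) of Lemma~\ref{th:distinguished_degree_constraints-asymptotic_infinite-set} indeed fail here, since $\support(\Delta)=\{p\}$ is a singleton, but they can be bypassed entirely because every intermediate generating function collapses to a monomial (or a lacunary series supported at a single relevant power). By linearity of Equation~\eqref{eq:distinguished_degree_constraints} in $F$, I would first reduce to the case $\mF=\{H\}$ consisting of a single isomorphism class on $n_H$ vertices with $m_H$ edges, for which $F(z,w,\vy) = \frac{1}{\aut{H}}\bigl(\prod_{v\in V(H)} y_{\deg(v)}\bigr) z^{n_H} w^{m_H}$.

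Setting $\Delta(x)=x^p/p!$, one has $\bar\partial\Delta(x)_d = x^{p-d}/(p-d)!$ for $0\le d\le p$ and $0$ otherwise, and the identity $\sum_{v\in V(H)}\deg(v)=2m_H$ collapses $F(z,w,\bar\partial\Delta(x))$ to the single monomial
\[
\frac{z^{n_H}\,w^{m_H}\,x^{pn_H-2m_H}}{\aut{H}\prod_{v\in V(H)}(p-\deg(v))!}.
\]
(If some vertex of $H$ has degree exceeding $p$, the factor $1/(p-\deg(v))!$ is interpreted as $0$ and the contribution vanishes, correctly reflecting that $H$ cannot embed in a $p$-regular multigraph.) Since $e^{z\Delta(x)}$ is lacunary in $x$, the inner extraction $[x^{2j}]$ selects, for each $k\ge 0$, a unique $j=(pn_H-2m_H+pk)/2$, and the outer extraction $[z^n w^{pn/2}]$ then forces $k=n-n_H$. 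The three extractions therefore produce the exact closed form
\[
\mg^{[\{H\}]}_{n,pn/2,\Delta} = \frac{1}{\aut{H}\prod_v(p-\deg(v))!}\cdot\frac{n!}{(n-n_H)!(p!)^{n-n_H}}\cdot\frac{2^{m_H}(pn-2m_H)!\,(pn/2)!}{(pn/2-m_H)!}.
\]

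Combining this with Lemma~\ref{lem:total_weight}, which specializes to $\mg_{n,pn/2,\Delta}=(pn)!/(p!)^n$, and with Proposition~\ref{th:general_weight_mean}, gives the exact expected number of $H$-subgraphs. Passing to the asymptotic via the elementary estimates $n!/(n-n_H)!\sim n^{n_H}$, $(pn-2m_H)!/(pn)!\sim (pn)^{-2m_H}$ and $(pn/2)!/(pn/2-m_H)!\sim (pn/2)^{m_H}$, the remaining factors combine as $2^{m_H}\cdot(pn/2)^{m_H}\cdot(pn)^{-2m_H}=(np)^{-m_H}$, yielding
\[
\esp(G[\{H\}]) \sim \frac{n^{n_H}}{\aut{H}(np)^{m_H}}\prod_{v\in V(H)}\mu_{\deg(v)},
\]
which is exactly $F(n,1/(np),(\mu_d))$ for the single class $\mF=\{H\}$. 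Summing over the isomorphism classes of $\mF$ and invoking linearity one last time completes the argument. The main obstacle is purely the bookkeeping of factorial cancellations; there is no analytic or probabilistic subtlety, because the $p$-regular constraint flattens the usual saddle-point analysis into a one-term coefficient extraction.
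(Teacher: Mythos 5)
Your proof is correct, and it takes a genuinely more elementary route than the paper. The paper disposes of this proposition via the remark preceding it: it re-runs the machinery of Lemma~\ref{th:distinguished_degree_constraints-asymptotic_infinite-set} (the integral representation in $t$, the Laplace method, the saddle point in $z$), observing that for $\Delta(x)=x^p/p!$ the ratios $x^d\Delta^{(d)}(x)/\Delta(x)$ are constants so that Condition~(C\ref{assump:2}) holds trivially and the $x$-extraction reduces to $[x^{pn}]\Delta(x)^n=(p!)^{-n}$. You instead bypass that lemma entirely and work directly from the exact formula of Theorem~\ref{th:distinguished_degree_constraints-symbolic}: since $F(z,w,\bar\partial\Delta(x))\,e^{z\Delta(x)}$ is a sum of monomials in $x$ supported on a single residue class, only one index $j$ survives each extraction, and the whole quantity $\mg^{[\{H\}]}_{n,pn/2,\Delta}$ collapses to an explicit product of factorials; I have checked your closed form and the factorial asymptotics ($n!/(n-n_H)!\sim n^{n_H}$, etc.), and they recombine to $F(n,1/(np),(\mu_d))$ exactly as claimed, with the degenerate case $\deg(v)>p$ handled consistently on both sides. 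What your route buys is an \emph{exact} pre-asymptotic formula and complete independence from the analytic apparatus (no $t$-integral, no saddle point, no admissibility conditions); what the paper's route buys is that no new computation is needed beyond noting which hypotheses of the general lemma become vacuous. The only point worth flagging is the final summation over isomorphism classes: termwise asymptotic equivalence of nonnegative quantities passes to the sum only for a finite family $\mF$ (or under uniformity of the error terms), which is the standing situation in this section but should be said explicitly; and, as in the paper, one implicitly assumes $pn$ even so that $p$-regular multigraphs on $n$ vertices exist.
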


	\subsection{Infinite set of nonzero weights}
    \label{sec:infinite-set-weights}

We now turn to the case when there is an infinite number of nonzero weights, but these weights do not grow ``too fast''; in particular we assume that their generating function has infinite radius of convergence, so Lemma~\ref{th:distinguished_degree_constraints-asymptotic_infinite-set} is applicable. 
Theorem~\ref{th:limit_multi_weights} below is the equivalent of Theorem~\ref{th:limit_law_simple}, in the case of degree constraints.

\begin{theorem}[Poisson law, weights, strictly balanced, multigraphs]
\label{th:limit_multi_weights}
Consider a weight generating function $\Delta(x)$
with nonnegative coefficients,
infinite radius of convergence,
that is not a polynomial,
and such that for all $d \geq 0$,
the ratio $\Delta^{(d)}(x) / \Delta(x)$
has a finite positive limit when $x$ goes to infinity.
Let $F$ be a strictly balanced multigraph of density $d$.
Consider an integer sequence $m = \exactbigO(n^{2 - 1/d})$,
and let $\chi := \chi(n)$ denote the unique positive solution of the relation
\[
	\frac{\chi \Delta'(\chi)}{\Delta(\chi)} = \frac{2m}{n}.
\]
Suppose that $\Delta(x)$ satisfies Condition~(C\ref{assump:3})
of Lemma~\ref{th:distinguished_degree_constraints-asymptotic_infinite-set}.
Then the number of $F$-subgraphs in a random $(n,m,\Delta)$-multigraph
has a limit Poisson law of parameter
\[
	\lim_{n \to \infty}
    F \left(n, \frac{1}{2m}, 
    	\left( \frac{\chi^d \Delta^{(d)}(\chi)}{\Delta(\chi)} \right)_{d \geq 0} \right).
\]
\end{theorem}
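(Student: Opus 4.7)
The plan is to mirror the argument of Theorem~\ref{th:limit_law_multi}: I apply Proposition~\ref{th:general_weights_second_asymptotics} with the weight function $\omega = \omega_\Delta$, where Lemma~\ref{th:distinguished_degree_constraints-asymptotic_infinite-set} replaces Theorem~\ref{th:distinguished_multi} as the source of the required asymptotic estimates. For any multigraph $G$, I set
\[
  \lambda_n(G) := G\!\left(n,\,\frac{1}{2m},\,\left(\frac{\chi^d\,\Delta^{(d)}(\chi)}{\Delta(\chi)}\right)_{d\ge 0}\right),
\]
and write $\lambda(G) := \lim_{n\to\infty}\lambda_n(G)$ whenever the limit exists.

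The first task is to verify the three hypotheses of Proposition~\ref{th:general_weights_second_asymptotics}. Since $F$ is a single multigraph, $F(z,w,\vy)$ is proportional to the monomial $z^{n(F)}\,w^{m(F)}\prod_{v\in F}y_{\deg(v)}$. The hypothesis that each ratio $\Delta^{(d)}(x)/\Delta(x)$ tends to a positive finite limit $\ell_d$ as $x\to\infty$, combined with the saddle-point equation $\chi\,\Delta'(\chi)/\Delta(\chi)=2m/n$ giving $\chi\sim (2m/n)/\ell_1\asymp n^{1-1/d(F)}$, yields by direct substitution $\lambda_n(F)\asymp n^{\,n(F)-m(F)/d(F)}=n^{0}$, a positive finite constant; this is the first hypothesis. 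The same scaling shows $\lambda_n(G)=\Theta\bigl(n^{\,m(G)(1/d(G)-1/d(F))}\bigr)$ for any connected multigraph $G$, which tends to zero whenever $d(G)>d(F)$, giving the third hypothesis.

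For the second hypothesis, the family $H\,e^{c F}$ has generating function $H(z,w,\vy)\,e^{c F(z,w,\vy)}$ by the symbolic method, and Lemma~\ref{th:distinguished_degree_constraints-asymptotic_infinite-set} applied to this family should yield
\[
  \mg_{n,m,\Delta}^{[H\,e^{c F}]} \sim \lambda_n(H)\,e^{c\lambda_n(F)}\,\mg_{n,m,\Delta}.
\]
The main technical obstacle is to check that conditions~(C1) and~(C2) of the lemma hold for this augmented family (condition~(C3) is hypothesized in the statement). For (C2) I write $F(nz,\,1/(2mt^2),\,\cdot)=\lambda_n(F)\,z^{n(F)}\,t^{-2m(F)}\,\phi_n(x)$ and observe that $\phi_n(x)\to\phi(x)$ uniformly on compact sets thanks to the hypothesis on the ratios $\Delta^{(d)}/\Delta$; then $G_n$ for $H\,e^{cF}$ factors as the monomial prefactor coming from $H$ times the exponential of a uniformly convergent sequence, and therefore inherits the required regularity on the contour used in the lemma. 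Once this verification is in place, Proposition~\ref{th:general_weights_second_asymptotics} applies directly and gives the claimed Poisson limit law with parameter $\lambda(F)$.
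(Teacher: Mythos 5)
Your proof follows essentially the same route as the paper's: both verify the three hypotheses of Proposition~\ref{th:general_weights_second_asymptotics} with $\lambda_n(G)=G\bigl(n,\,1/(2m),\,(\chi^d\Delta^{(d)}(\chi)/\Delta(\chi))_{d\ge 0}\bigr)$ via the same scaling computation, and invoke Lemma~\ref{th:distinguished_degree_constraints-asymptotic_infinite-set} to supply the second one. The only differences are cosmetic: the paper first dispatches the density-one case (cycles) separately via the argument of Theorem~\ref{th:finite_trees_cycles} before assuming $\chi\to\infty$, and is briefer than you are about why the conditions of the lemma hold for the augmented family $H\,e^{cF}$.
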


\begin{proof}
The only strictly balanced multigraphs of density $1$ are the cycles.
Their number follows a Poisson law,
and the proof is the same as for Theorem~\ref{th:finite_trees_cycles}.
So we focus on the case of a strictly balanced multigraph $F$
with density greater than $1$, so $2m/n$ tends to infinity.
The relation
\[
	\frac{\chi \Delta'(\chi)}{\Delta(\chi)} = \frac{2m}{n}
\]
implies that $\chi$ tends to infinity with $n$. By assumption, $\frac{\Delta'(\chi)}{\Delta(\chi)}$ has a finite limit $\ell$,
which gives
\[
	\chi \sim \frac{2 m}{\ell n}.
\]
This implies that
\[
	\alpha_n(F) =
    F \left(n, \frac{1}{2m}, 
    	\left( \frac{\chi^d \Delta^{(d)}(\chi)}{\Delta(\chi)} \right)_{d \geq 0} \right)
\]
has a finite limit.
Also, for any multigraph $H$ of density higher than $d$,
\[
	\alpha_n(H) = \exactbigO \left(
    \frac{n^{n(H)}}{(2m)^{m(H)}} 
    \prod_{v \in V(H)} \frac{\chi^{\deg(v)} \Delta^{(d)}(\chi)}{\Delta(\chi)}
    \right) =
    \smallo(1), 
\]
because $m = \exactbigO(n^{2 - 1/d})$, $\frac{m(H)}{n(H)} > d$ and $\sum_{v \in V(H)} \deg(v) = 2 m(H)$.
By application of Lemma~\ref{th:distinguished_degree_constraints-asymptotic_infinite-set} 
the second assumption of Proposition~\ref{th:general_weights_second_asymptotics} is satisfied,
so the number of $F$-subgraphs in a random $(n,m,\Delta)$-multigraph
follows asymptotically a Poisson law of parameter $\alpha_n(F)$.
\end{proof}

\section{Extensions: when the conditions of Lemma~\ref{th:distinguished_degree_constraints-asymptotic_infinite-set} do not hold}
\label{sec:failed-assumptions}

We consider in this section an example for each case when one of the three conditions of  Lemma~\ref{th:distinguished_degree_constraints-asymptotic_infinite-set} fails.

\subsection{When Condition (C1) does not hold: $\frac{2m}{n} \to 0$, threshold for trees}
\label{sec:failed-trees}

We look here at what happens when  $m/n \rightarrow 0$, and study the appearance of trees.

Given a family of degrees defined by its generating function $\Delta(x)= \sum\limits_{d\ge 0} \delta_d \frac{x^d}{d!}$, set
\[
	\mu(j) := \min\{d : d\ge j, \delta_d>0\}, \textrm{ for any }j\ge 0.
\]
In particular, we have $\mu(0) = 0$, \ie $\delta_0 > 0$. Otherwise there would be no isolated vertex, and the number of edges would grow at least linearly in the number of vertices ($2m \geq n$), contradicting  $m/n \rightarrow 0$.
Define  also, for a tree~$T$,
\[
\gamma_\Delta (T) := \sum\limits_{v\in T} \mu(\deg(v)).
\]

\begin{theorem}[Trees in sparse multigraphs]
\label{th:weights-trees}
Let $T$ be a given tree with $k$ vertices. Suppose that $\Delta$ satisfies Condition~(C\ref{assump:3}), and consider a random $(n,m,\Delta)$-multigraph $G$ with $m=\Theta(n\epsilon_n)$, where $\epsilon_n \to 0$. Then
\[
	\esp(G[T]) = \Theta\left( n \epsilon_n^{ -(k -1) + \frac{\gamma_\Delta (T) }{\mu(1)}}\right).
\]
Thus, when $\epsilon_n = \smallo  ( n^{\lambda} )$ with 
$ \lambda = \frac  {\mu(1)}  {(k-1)\mu(1) - \gamma_\Delta (T) }$, 
$G[T] = 0$ almost surely.
\end{theorem}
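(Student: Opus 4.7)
The plan is to compute $\esp(G[T])=\mg_{n,m,\Delta}^{[T]}/\mg_{n,m,\Delta}$ from the exact formulas of Theorem~\ref{th:distinguished_degree_constraints-symbolic} and Lemma~\ref{lem:total_weight}, and then to extract the asymptotics by a saddle-point argument adapted to the present regime, in which the natural saddle $\chi$ tends to $0$ rather than remaining in the open interval required by Lemma~\ref{th:distinguished_degree_constraints-asymptotic_infinite-set}.

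First I would specialise the formalism to the single tree $T$, whose generating function (in the sense of~\eqref{eq:gf_deg}) is $T(z,w,\vy)=(\aut T)^{-1}z^k w^{k-1}\prod_{v\in V(T)}y_{\deg(v)}$. Plugging this into Theorem~\ref{th:distinguished_degree_constraints-symbolic}, the extraction of $[z^n]$ is trivial (it produces $\Delta(x)^{n-k}/(n-k)!$ through $e^{z\Delta(x)}$), while the extraction of $[w^m]$ forces the summation index to $j=m-k+1$. This yields the closed form
\[
\mg_{n,m,\Delta}^{[T]}=\frac{n!\,2^{k-1}\,m!\,(2m-2k+2)!}{\aut{T}\;(n-k)!\,(m-k+1)!}\;[x^{2m-2k+2}]\,\Delta(x)^{n-k}\!\!\prod_{v\in V(T)}\!\!\Delta^{(\deg(v))}(x).
\]
Dividing by $\mg_{n,m,\Delta}=(2m)!\,[x^{2m}]\Delta(x)^n$ and simplifying the factorial prefactors via Stirling produces the factor $n^k/((2m)^{k-1}\aut{T})$, so the whole question reduces to the asymptotic behaviour of the ratio
\[
R_n:=\frac{[x^{2m-2k+2}]\,\Delta(x)^{n-k}\prod_v\Delta^{(\deg(v))}(x)}{[x^{2m}]\,\Delta(x)^n}.
\]

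For $R_n$, I would let $\chi=\chi(n)$ be the positive solution of $\chi\Delta'(\chi)/\Delta(\chi)=2m/n$. Since $2m/n\to 0$ and $\Delta(\chi)\to\delta_0>0$ while $\chi\Delta'(\chi)\sim\delta_{\mu(1)}\chi^{\mu(1)}/(\mu(1)-1)!$, one finds $\chi=\Theta(\epsilon_n^{1/\mu(1)})\to 0$. Writing both coefficient extractions as Cauchy integrals on the circle $|x|=\chi$ and applying the standard quadratic saddle-point expansion of $n\log\Delta(x)-2m\log x$ at $x=\chi$ (the tail contribution being controlled by condition~(C3), which is assumed), the Gaussian prefactors of order $1/\sqrt{2\pi n\sigma^2}$ cancel between numerator and denominator and one obtains
\[
R_n\sim\prod_{v\in V(T)}\frac{\chi^{\deg(v)}\,\Delta^{(\deg(v))}(\chi)}{\Delta(\chi)},
\]
exactly as in Theorem~\ref{th:finite_trees_cycles}(ii). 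Using $\Delta^{(d)}(\chi)\sim\delta_{\mu(d)}\chi^{\mu(d)-d}/(\mu(d)-d)!$ for each $d$ gives $\chi^{\deg(v)}\Delta^{(\deg(v))}(\chi)/\Delta(\chi)=\Theta(\chi^{\mu(\deg(v))})$, so the product over $v$ is $\Theta(\chi^{\gamma_\Delta(T)})=\Theta(\epsilon_n^{\gamma_\Delta(T)/\mu(1)})$. Multiplying with $n^k/(2m)^{k-1}=\Theta(n\epsilon_n^{-(k-1)})$ produces the desired estimate $\esp(G[T])=\Theta(n\epsilon_n^{-(k-1)+\gamma_\Delta(T)/\mu(1)})$. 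The almost-sure statement then follows by Markov: setting $\eta=\gamma_\Delta(T)/\mu(1)-(k-1)$ we have $\lambda=-1/\eta$, so $\epsilon_n=\smallo(n^\lambda)$ gives $n\epsilon_n^\eta=\smallo(1)$ and hence $\proba(G[T]\ge 1)\le\esp(G[T])\to 0$.

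The hard part will be justifying the saddle-point step when $\chi\to 0$: Lemma~\ref{th:distinguished_degree_constraints-asymptotic_infinite-set} requires $\inf_n 2m/n>\min\support(\Delta)=0$, which is exactly what fails here since $\delta_0>0$. One must therefore re-examine the saddle-point bound directly: the local variance $\chi^2(\log\Delta)''(\chi)=\Theta(\epsilon_n)$ yields a saddle neighbourhood of effective size $n\sigma^2=\Theta(m)\to\infty$, the quadratic Taylor expansion at $\chi$ remains valid on that neighbourhood, and the exterior contribution is negligible under~(C3). This is the only place where the standard machinery must be patched; once the asymptotic form of $R_n$ is established, the rest is bookkeeping.
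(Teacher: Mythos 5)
Your proposal is correct and reaches the same asymptotic expression as the paper, namely $\esp(G[T])\sim \frac{n^k}{(2m)^{k-1}\aut{T}}\prod_{v\in V(T)}\frac{\chi^{\deg(v)}\Delta^{(\deg(v))}(\chi)}{\Delta(\chi)}$ with $\chi=\Theta(\epsilon_n^{1/\mu(1)})$, but it gets there by a genuinely different middle route. The paper re-runs the full machinery of the proof of Lemma~\ref{th:distinguished_degree_constraints-asymptotic_infinite-set}: it keeps the auxiliary integral in $t$ (coming from the Gaussian-moment identity for $(2j)!/(2^jj!)$), performs the changes of variables, applies Laplace's method in $t$, a saddle point in $z$ at $z=1$, and finally Condition~(C\ref{assump:3}) for the $x$-extraction, observing that everything goes through because $T(z,w,\vy)$ is a monomial. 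You exploit the monomial structure more aggressively: the $[w^m]$ extraction pins the summation index to $j=m-k+1$ and the $[z^n]$ extraction is exact, so Theorem~\ref{th:distinguished_degree_constraints-symbolic} collapses to a single explicit coefficient $[x^{2m-2k+2}]\,\Delta(x)^{n-k}\prod_v\Delta^{(\deg v)}(x)$ times factorials, and after Stirling only the ratio $R_n$ of two $x$-coefficients remains. This avoids the $t$-integral and the $z$-saddle-point entirely and is cleaner for this special case (though it would not generalize to non-monomial $F$). Both arguments face the same genuine difficulty — the saddle $\chi$ tends to $0$, so neither Condition~(C1) nor the large-powers lemma with $N/n\to\lambda>0$ applies as stated — and both resolve it at a comparable level of rigor: the paper asserts the proof "can be adapted" under (C3), while you explicitly note that the effective Gaussian scale is $n\sigma^2=\Theta(n\chi^2(\log\Delta)''(\chi))=\Theta(m)\to\infty$ and that (C3) controls the tails, which is the right justification. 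Your bookkeeping of the orders of magnitude ($\Delta^{(d)}(\chi)=\Theta(\chi^{\mu(d)-d})$, the product being $\Theta(\chi^{\gamma_\Delta(T)})$) and the final Markov-inequality step for the threshold are correct; as a minor bonus, your constant $(\mu(d)-d)!$ in the expansion of $\Delta^{(d)}(\chi)$ corrects a small typo ($\mu(j)!$) in the paper's own proof.
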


\begin{proof}
As $\frac{2m}{n}$ tends to $0$, so does the solution $\chi$ of the saddle-point equation~\eqref{eq:x-boltzmann}, and we can use the saddle-point equation to estimate the order of magnitude of the saddle-point and of the derivatives of $\Delta$ at $\chi$:
\[
	\Delta(\chi) = \delta_0 + \smallo(1) = \Theta(1), \quad
    \Delta^{(j)}(\chi) = \delta_{\mu(j)} \frac{\chi^{\mu(j)-j}}{\mu(j)!} + \smallo(\chi^{\mu(j)-j}) = \Theta(\chi^{\mu(j)-j}),\quad  \text{ for all } j\ge 1.
\]
This implies 
\[
	\frac{\chi \Delta'(\chi)}{\Delta(\chi)} = \Theta(\chi^{\mu(1)}) = \frac{2m}{n} = \Theta(\epsilon_n).
\]
Then we obtain
\[
	\chi = \Theta\left(\epsilon_n^\frac{1}{\mu(1)}\right), 
    \quad \frac{\chi^j \Delta^{(j)}(\chi)}{\Delta(\chi)} = \Theta\left(\epsilon_n^\frac{\mu(j)}{\mu(1)}\right).
\]
%
Although Condition (C1) of Lemma~\ref{th:distinguished_degree_constraints-asymptotic_infinite-set} does not hold, as long as Condition (C3) is valid the proof can be adapted, using the fact that $T(z,w,\textbf{y})$ is simply a monomial in this case:
\[
	T(z,w,\textbf{y}) = \frac{z^{k+1}}{(k+1)!} \frac{w^{k}}{2^k k!} \prod_{v\in V(T)} y_{\deg(v)}, \quad\textrm{ where } k= |E(T)|.
\]
First, we directly get Equation~\eqref{eq:dist_finite_coeff}:
\[
	\mg_{n,m,\Delta}^{[T]} =
	n! \frac{2^{m+\frac12}}{\sqrt\pi} m!  [z^n w^m]
    \int_0^{\infty}
    T(z,w,(\bar\partial\Delta)(t\sqrt{w}))
    e^{z \Delta(t \sqrt{w})}
    e^{-t^2/2} \de t.
\]
Applying the changes of variables
\[
	z \rightarrow \frac{nz}{\Delta(t\sqrt{w})}, \quad w \rightarrow \frac{x^2}{t^2}, \quad t \rightarrow \sqrt{2m} t
\]
we get
\[
	\mg_{n,m,\Delta}^{[T]} =
	\frac{n!}{n^n} \frac{ (4m)^{m+\frac12}}{\sqrt\pi} m!  [z^n x^{2m}]
    \int_0^{\infty}
    T\left(nz,\frac{1}{2mt^2},\left( \frac{x^d \Delta^{(d)}(x)}{\Delta(x)} \right)_d \right)
    \Delta^n(x) e^{nz}
    t^{2m} e^{-mt^2} \de t.
\]
Similarly, setting
\[
	G_n(z,x,t) := T\left(nz,\frac{1}{2mt^2},\left( \frac{x^d \Delta^{(d)}(x)}{\Delta(x)}  \right)_d \right), \quad L_n(z,x,t) := \frac{G_n(z,x,t)}{G_n(1,\chi,1)},
\]
we obtain
\[
	\mg_{n,m,\Delta}^{[T]} = 
     G_n(1,\chi,1) \cdot
	\frac{n!}{n^n} \frac{ (4m)^{m+\frac12}}{\sqrt\pi} m! [z^n x^{2m}]
    \int_0^{\infty}
    L_n(z,x,t)
    \Delta^n(x) e^{nz}
    t^{2m} e^{-mt^2} \de t.
\]
Since $L_n(z,x,t)$ is a monomial, we can apply Laplace's method for $t=1$ and obtain 
\[
	\int_0^\infty L_n(z,x,t) t^{2m} e^{-mt^2} \mathrm dt 
    \sim_{n \to \infty} L_n(z,x,1)\int_0^\infty t^{2m} e^{-mt^2} \mathrm dt 
    = L_n(z,x,1) \sqrt{\frac{\pi}{2}} \frac{1}{(2m)^{m+\frac12}} \frac{(2m)!}{2^m m!}, 
\]
uniformly for $|z|=1$ and $x$ in some neighbourhood of the origin. 
Then we extract the coefficient in $z$ with a simple saddle-point method at $z=1$:
\[
	[z^n] L_n(z,x,1) e^{nz} \sim L_n(1,x,1) \frac{n^n}{n!}.
\]
We are left with the extraction of the coefficient in $x$:
\[
	[x^{2m}] L_n(1,x,1) \Delta^n(x) 
    = \frac{1}{2i\pi} \int_{|x|=\chi} L_n(1,x,1) \frac{\Delta^n(x)}{x^{2m+1}} \mathrm dx 
    = \frac{1}{2\pi \chi^{2m}} \int_{-\pi}^\pi L_n(1,\chi e^{i\theta},1) \frac{\Delta^n(\chi e^{i\theta})}{e^{2im\theta}} \mathrm d\theta.
\]
By Condition~(C\ref{assump:3}) and the analyticity of $L_n$, the main contribution originates from the region around 0. We thus have
\[
	[x^{2m}] L_n(1,x,1) \Delta^n(x) \sim L(1,\chi,1) [x^{2m}] \Delta^n(x) = L(1,\chi,1) \frac{[x^{2m}] \Delta^n(x)}{(2m)!}.
\]
Collecting everything, we obtain the asymptotics of the expected number of occurrences of $T$:
\[
	\esp({G[T]}) = \frac{\mg_{n,m,\Delta}^{[F]}}{\mg_{n,m,\Delta}} \sim G_n(1,\chi,1).
\]
Substituting $m = n\epsilon_n$, we finally have
\[
	\esp({G[T]}) = G_n(1,\chi,1) = T \left( n,\frac{1}{2m}, \left( \frac{ \chi^d \Delta^{(d)} (\chi) }{\Delta(\chi)} \right)_d \right) 
   = \Theta\left( n \epsilon_n^{ -(k -1) +  \frac{\gamma_\Delta (T)}{\mu(1)}}\right).
\]
Again, the random variable $G[T]$ exhibits a threshold behaviour at the value
\[ 
	\epsilon_n := \Theta\left(n^{\frac{\mu(1)}{(k-1)\mu(1) - \gamma_\Delta (T) }}\right) \to 0. \qedhere
\]
\end{proof}

Notice that we recover the threshold obtained in Corollary~\ref{cor:essential_density_multigraph} when all degrees are allowed, which implies that $\mu(j) = j$, for $j\ge 0$, and that $\gamma_\Delta (T) $ is simply twice the number of edges of~$T$:
\[
	\epsilon_n := n^{\alpha_0 -1} \Rightarrow \alpha_0 = 1 + \frac{1}{(k-1) - \gamma_\Delta (T) } = 1 - \frac{1}{k-1} = 2 - \frac{1}{d(T)}.
\]

\subsection{When Condition (C2) does not hold: Power law}
\label{sec:power-law}

This subsection is devoted to the study of weighted multigraphs where the distribution of the vertex degrees in a random multigraph $G$ follows a power law with parameter $\beta >1$, \emph{i.e.}:
\[
\exists C > 0, \forall d \ge 1, \forall v\in V(G):\ \mathbb{P}( \deg(v) = d ) = C d^{-\beta}.
\]

Many real-world networks exhibit a power-law degree distribution, usually with a parameter $\beta$ located between 2 and 3. The degree distribution being fixed, the ratio $2m/n$ is then determined by $\beta$. In our model, this corresponds to weights $\delta_d = d^{-\beta} d!$ in the expression of $\Delta$ and to evaluating it at $x=1$. We obtain
\[
	\Delta_\beta(x) := \sum_{d \ge 1} d^{-\beta} x^d \quad\text{ and }\quad \proba(\deg(\Gamma^{\mathcal{B}}_{x=1} V_{\Delta_\beta}) = d) = d^{-\beta}.
\]
Notice that $\Delta_\beta$ now has finite radius of convergence equal to 1, and at $x=1$ we have
\[
	\Delta_\beta(1) = \zeta(\beta), \quad \Delta_\beta'(1) = \zeta(\beta-1),
\]
where $\zeta(s) := \sum\limits_{n\ge 1} \frac{1}{n^s}$ is the Riemann zeta function.
In this case, with the saddle-point located at $\chi =1$, the saddle-point equation has a particular shape:
\[ 
	\frac{\Delta_\beta'(1)}{\Delta_\beta(1)} = \frac{\zeta(\beta-1)}{\zeta(\beta)} = \frac{2m}{n}.
\]
Hence the limit of $2m/n$ is prescribed by the parameter $\beta$ of the power law.

Lemma~\ref{th:distinguished_degree_constraints-asymptotic_infinite-set} does not apply here as Condition~(C\ref{assump:2}) is not satisfied: $L_n(z,x,t)$ is in general not analytic around $x=1$, because of the higher derivatives of $\Delta_\beta$ involved in the formula.

Nonetheless, most of the steps are still valid. After a similar reasoning to extract the coefficient $[z^n]$ and estimate the integral with respect to $t$, one obtains
\[
	\mg_{n,m,\Delta_\beta}^{[F]} \sim 
    (2m)!  [x^{2m}]
    F\left( n, \frac{1}{2m},\left( \frac{x^d \Delta_\beta^{(d)}(x)}{\Delta_\beta(x)} \right)_{d\ge 0} \right)
    \Delta_\beta(x)^n.
\]
Let us study a simple family $F$ with a unique element, with $n_F$ vertices and $m_F$ edges. Then
\[
	F(z,w,{\bf y}) = \frac{1}{\aut{F}}\frac{z^{n_F}}{n_F!} \frac{w^{m_F}}{2^{m_F} m_F!} \prod_{v\in F} y_{\deg(v)}.
\]
Replacing $F$ by its expression in the above formula yields
\[
	\mg_{n,m,\Delta_\beta}^{[F]} \sim 
    \frac{(2m)!  n^{n_F}}{(2m)^{m_F} n_F! 2^{m_F} m_F!} [x^{2m}]
    \prod_{v\in F} \frac{x^{\deg(v)} \Delta_\beta^{(\deg(v))}(x)}{\Delta_\beta(x)}
    \Delta_\beta(x)^n.
\]
A standard saddle-point technique cannot be applied directly here, as there is a conflict between the saddle-point at $x=1$ with the large power $\Delta_\beta(x)^n$ and the singularities of $\Delta_\beta^{(j)}(x), j\ge 2$.

To circumvent this problem, we use a reformulation of Lagrange inversion (\emph{cf.} Lemma~\ref{lem:Lagrange} in the appendix).
In our setting, after some rewriting, we obtain
\[
	[x^{2m}]\prod_{v\in F}\frac{x^{\deg(v)} \Delta_\beta^{(\deg(v))}(x)}{\Delta_\beta(x)} \Delta_\beta(x)^n = [x^{2m-n}]\prod_{v\in F}\frac{x^{\deg(v)} \Delta_\beta^{(\deg(v))}(x)}{\Delta_\beta(x)} \left(\left(\frac{\Delta_\beta(x)}{x}\right)^{\frac{n}{2m-n}}\right)^{2m-n} .
\]
Lemma~\ref{lem:Lagrange} can now be applied with
\[
	\Phi(x) := \left(\frac{\Delta_\beta(x)}{x}\right)^{\frac{n}{2m-n}}, \quad H(x) := \prod_{v\in F}\frac{x^{\deg(v)} \Delta_\beta^{(\deg(v))}(x)}{\Delta_\beta(x)}
\]
and with $T(z)$ defined implicitly by the equation $T(z) = z \Phi(T(z))$.
We get
\begin{equation}
	[x^{2m}]\prod_{v\in F}\frac{x^{\deg(v)} \Delta_\beta^{(\deg(v))}(x)}{\Delta_\beta(x)} \Delta_\beta(x)^n = [z^{2m-n}] \frac{z T'(z)}{T(z)} \prod_{v\in F}\frac{T(z)^{\deg(v)} \Delta_\beta^{(\deg(v))}(T(z))}{\Delta_\beta(T(z))}.
\label{eq:res-Lagrange}
\end{equation}
The right part of this equation is now amenable to singularity analysis.

\paragraph{Asymptotic expansions of $\Delta_\beta$, $\Phi$ and $T$ when $2< \beta <3$.}

The function $\Delta(x)$ is a simple case of a polylogarithm, whose asymptotics around $x=1$ is known (see Flajolet and Sedgewick~\cite[Theorem VI.7, p.~408]{FS09}):
\[
	\Delta_\beta(x) \sim \Gamma(1-\beta) (-\log(x))^{\beta -1} + \sum_{j\ge 0} \frac{(-1)^j}{j!} \zeta(\beta -j) (-\log(x))^j.
\]
When $2<\beta <3$, one has more precisely
\[
	\Delta_\beta(x) = \zeta(\beta) - \zeta(\beta-1)(1-x) + \Gamma(1-\beta)(1-x)^{\beta -1} + (\zeta(\beta -2) - \zeta(\beta -1)) \frac{(1-x)^2}{2} + O(1-x)^\beta ,
\]
which leads to the following asymptotics for $\Phi(x)$:
\[
	\Phi(x) = \zeta(\beta)^{\frac{n}{2m-n}} + \frac{n}{2m-n} \frac{\zeta(\beta)-\zeta(\beta-1)}{\zeta(\beta)} \zeta(\beta)^{\frac{n}{2m-n}}  (1-x) + \frac{n\Gamma(1-\beta)}{2m-n} \zeta(\beta)^{\frac{n}{2m-n}-1}(1-x)^{\beta -1} + O(1-x)^2 .
\]
Notice that
\[
	\frac{n}{2m-n} = \frac{1}{2m/n - 1} = \frac{1}{\Delta'_\beta(1)/\Delta_\beta(1) -1} = \frac{1}{\zeta(\beta -1)/\zeta(\beta) -1} = \frac{\zeta(\beta)}{\zeta(\beta -1) -\zeta(\beta)}.
\]
Finally one gets
\[ 
	\Phi(x) = \zeta(\beta)^{\frac{n}{2m-n}} \left(x + \frac{\Gamma(1-\beta)}{\zeta(\beta-1) - \zeta(\beta)} (1-x)^{\beta -1}\right) + O(1-x)^2 .
\]
To obtain the asymptotics of $T(z)=z\Phi(T(z))$, one might want to use a classical Lagrange inversion. Here the saddle-point lies on the circle of convergence of $\Phi(x)$, as they are both equal to~1. Fortunately, with this remarkable expansion of $\Phi(x)$, it falls into the setting presented by Flajolet and Sedgewick in~\cite[Section VI.18, p.~407]{FS09},  yielding directly the $\Delta$-analyticity of $T(z)$ as well as its expansion:
\begin{equation}
	T(z) = 1- \tau^{\frac{1}{\beta-1}}\left(1- \frac{z}{\rho}\right)^{\frac{1}{\beta-1}} + \smallo\left(1-\frac{z}{\rho}\right)^{\frac{1}{\beta-1}}, 
    \textrm{where } \tau := \frac{\zeta(\beta-1) - \zeta(\beta)}{\Gamma(1-\beta)}, \textrm{ and } \rho := \zeta(\beta)^{-\frac{n}{2m-n}}.
    \label{eq:dev-T}
\end{equation}
Moreover, we have 
\[
	T'(z) =
  \frac{\tau^{\frac{1}{\beta-1}}}{\rho (\beta-1)}
  \left( 1 - \frac{z}{\rho } \right) ^ { \frac{1}{\beta-1}-1} 
  + o \left( 1 - \frac{z}{\rho } \right)^{\frac{1}{\beta-1}-1}.
\]
We do not pursue the computation of the asymptotics in the general case, as it becomes somewhat intricate, but consider below the specific case of cycles.

\subsubsection*{Cycles of fixed length} 
When $F$ is a cycle of length $\ell, (\ell \ge 3)$, we have
\[	
	F(z,w,\vy) = \frac{1}{2\ell} \frac{z^\ell}{\ell!} \frac{w^\ell}{2^\ell \ell!} 
\; y_2^\ell .
\]

\begin{theorem}[Cycles in power-law multigraphs]
\label{th:power-law-cycles}
When the weights follow a power law of parameter~$\beta \; (2<\beta<3)$, the  expected number of $\ell$-cycles in a random $(n,m,\Delta_\beta)$-multigraph is equal to
\[
	\frac{\mg_{n,m,\Delta_\beta}^{[F]}}{\mg_{n,m,\Delta_\beta}} 
    \sim 
    \kappa_{\beta,\ell} \, n^{\frac{3-\beta}{\beta-1}\ell},
\]
where $\kappa_{\beta,\ell}$ is independent of $n$ and can be explicitly computed.
\end{theorem}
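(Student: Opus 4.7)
The plan is to specialize the general formula displayed just above the theorem to $F=C_\ell$ and then perform singularity analysis on the resulting coefficient ratio. A cycle of length $\ell\ge 3$ has every vertex of degree $2$ and $\aut{C_\ell}=2\ell$, so the product appearing in the generic expression becomes $H(x):=\left(\tfrac{x^2\Delta_\beta''(x)}{\Delta_\beta(x)}\right)^\ell$. Dividing the asymptotic expression for $\mg_{n,m,\Delta_\beta}^{[F]}$ by $\mg_{n,m,\Delta_\beta}=(2m)!\,[x^{2m}]\Delta_\beta(x)^n$ (Lemma~\ref{lem:total_weight}) and using $n_F=m_F=\ell$, one obtains
\[
\esp(G[C_\ell])\sim \frac{1}{2\ell\,(\ell!)^2\,2^\ell}\left(\frac{n}{2m}\right)^{\!\ell}\cdot \frac{[x^{2m}]\,H(x)\,\Delta_\beta(x)^n}{[x^{2m}]\,\Delta_\beta(x)^n}.
\]
Since $2m/n\to\zeta(\beta-1)/\zeta(\beta)$, the prefactor $(n/2m)^\ell$ contributes only a multiplicative constant, and everything reduces to the asymptotics of the displayed ratio of coefficients.

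Next, I would apply the Lagrange-inversion identity~\eqref{eq:res-Lagrange} to both numerator and denominator, with $\Phi(x)=(\Delta_\beta(x)/x)^{n/(2m-n)}$ and $T(z)=z\Phi(T(z))$. This rewrites each coefficient extraction as $[z^{2m-n}]$ of an explicit analytic function of $T(z)$, so the problem becomes one of singularity analysis at $z=\rho:=\zeta(\beta)^{-n/(2m-n)}$. The aperiodicity of $\Delta_\beta$ (all positive degrees carry nonzero weight) guarantees that this is the unique dominant singularity, so Condition~(C\ref{assump:3})-type concerns do not enter.

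The key singular input is the polylogarithmic expansion recalled in the paper: for $2<\beta<3$,
\[
\Delta_\beta''(x)=\Gamma(1-\beta)(\beta-1)(\beta-2)(1-x)^{\beta-3}+O(1)
\]
near $x=1$, with a strictly negative exponent $\beta-3$. Combining this with the expansions of $T(z)$ and $T'(z)$ at $z=\rho$ given by~\eqref{eq:dev-T}, the numerator integrand $\tfrac{zT'(z)}{T(z)}H(T(z))$ behaves like $C_1(1-z/\rho)^\gamma$ with
\[
\gamma=\frac{(2-\beta)+(\beta-3)\ell}{\beta-1},
\]
while the denominator integrand $\tfrac{zT'(z)}{T(z)}$ behaves like $C_2(1-z/\rho)^{\gamma_0}$ with $\gamma_0=(2-\beta)/(\beta-1)$. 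The Flajolet--Sedgewick transfer theorem then yields coefficient asymptotics of the form $\rho^{-(2m-n)}(2m-n)^{-\gamma-1}$ and $\rho^{-(2m-n)}(2m-n)^{-\gamma_0-1}$, and in the ratio the exponential factors $\rho^{-(2m-n)}$ cancel. The surviving power is $(2m-n)^{\gamma_0-\gamma}=(2m-n)^{(3-\beta)\ell/(\beta-1)}$, and since $2m-n$ grows linearly in $n$ this gives the announced exponent $\tfrac{3-\beta}{\beta-1}\ell$; the constant $\kappa_{\beta,\ell}$ collects the Gamma-function factors from the transfer theorem, the combinatorial prefactor, and the limits of $(n/2m)^\ell$ and $(2m-n)/n$.

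The main technical obstacle is justifying the transfer theorem: it requires a $\Delta$-domain at $\rho$ on which the two integrands are analytic. The $\Delta$-analyticity of $T$ has already been invoked in the excerpt via Section VI.18 of~\cite{FS09}, and since $\Delta_\beta$ together with all its derivatives extends analytically to $\mathds{C}\setminus[1,+\infty)$, composition with $T$ transfers this analyticity to the required $\Delta$-domain at $\rho$. Once this is in place, singularity analysis applies and the proof is complete.
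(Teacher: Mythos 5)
Your proposal is correct and follows essentially the same route as the paper: specializing the pre-theorem asymptotic formula to $C_\ell$, rewriting both $[x^{2m}]$ extractions via the Lagrange-inversion identity~\eqref{eq:res-Lagrange}, expanding $\Delta_\beta''$ and $T$, $T'$ at the singularity $\rho=\zeta(\beta)^{-n/(2m-n)}$, and transferring; your singular exponents $\gamma$ and $\gamma_0$ agree with the paper's $\frac{(\beta-3)\ell-\beta+2}{\beta-1}$ and $\frac{2-\beta}{\beta-1}$, yielding the same power $n^{\frac{3-\beta}{\beta-1}\ell}$. The only difference is cosmetic: you carry the $1/\aut{C_\ell}=1/(2\ell)$ factor explicitly (which the paper's intermediate display appears to drop), but this only affects the constant $\kappa_{\beta,\ell}$.
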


\begin{proof}
	We want to extract the coefficient $[z^{2m-n}]$ in the following expression, coming from \eqref{eq:res-Lagrange}:
\[
	\frac{z T'(z)}{T(z)} \left( \frac{T(z)^2 \Delta_\beta''(T(z))}{\Delta_\beta(T(z))} \right)^\ell.
\]
Using~\cite[Theorem VI.7, p.~408]{FS09} again, we first obtain the asymptotics of~$\Delta_\beta''$ around $x=1$: 
\begin{eqnarray*}
	\Delta_\beta''(x) &=& 
    \frac{\Delta_{\beta-2}(x) - \Delta_{\beta-1}(x)}{x^2} 
    \\ &=& 
    \Gamma(3-\beta)(-\log(x))^{\beta-3} + \smallo(\log^{\beta-3}(x)),
\end{eqnarray*}
where the second equation holds because $\beta-3 <0$, all the other powers of $\xi$ being positive.
With the expansion~\eqref{eq:dev-T} for $T(z)$, we get
\begin{align*}
	\Delta_\beta(T(z)) &= \zeta(\beta) + \smallo(1) ;
    \\
    \Delta''_\beta(T(z)) &= 
    \Gamma(3-\beta)\left( \tau\left(1-\frac{z}{\rho}\right) \right)^{\frac{\beta-3}{\beta-1}} +\smallo\left(1-\frac{z}{\rho} \right)^{\frac{\beta-3}{\beta-1}}.
\end{align*}
We are in the critical composition scheme (cf. [p.~412]\cite{FS09}): The value of $T$ at its singularity is equal to the singularity of $\Delta$,
$$
T(\rho) = 1.
$$
We now collect every expansion and obtain 
\[
	\frac{z T'(z)}{T(z)} \left( \frac{T(z)^2 \Delta_\beta''(T(z))}{\Delta_\beta(T(z))} \right)^\ell \sim z\frac{\tau^{\frac{\beta-3}{\beta-1}\ell+\frac{1}{\beta-1}} \Gamma(3-\beta)^\ell}{\rho(\beta-1)\zeta(\beta)^\ell} \left( 1- \frac{z}{\rho}\right)^{ \frac{\beta -3}{\beta -1}\ell + \frac{1}{\beta -1} -1}.
\]
Applying the transfer lemma presented in Flajolet and Sedgewick~\cite{FS09}, we get finally
\begin{align*}
	[z^{2m-n}] \frac{z T'(z)}{T(z)} \left( \frac{T(z)^2 \Delta_\beta''(T(z))}{\Delta_\beta(T(z))} \right)^\ell 
    &\sim [z^{2m-n-1}] \frac{\tau^{\frac{(\beta-3)\ell +1}{\beta-1}} \Gamma(3-\beta)^\ell}{\rho(\beta-1)\zeta(\beta)^\ell} \left( 1- \frac{z}{\rho}\right)^{\frac{(\beta-3)\ell - \beta +2}{\beta -1}} \\
    &\sim \frac{\tau^{\frac{(\beta-3)\ell +1}{\beta-1}} \Gamma(3-\beta)^\ell}{(\beta-1)\zeta(\beta)^\ell} \frac{(2m-n-1)^{-\frac{(\beta-3)\ell -\beta +2}{\beta-1}-1}}{\Gamma\left(-\frac{(\beta-3)\ell - \beta +2}{\beta-1}\right)} \rho^{-(2m-n)} \\
    &\sim 
    \frac{\tau^{\frac{(\beta-3)\ell +1}{\beta-1}} \Gamma(3-\beta)^\ell }{(\beta-1)\zeta(\beta)^\ell \, \Gamma\left(\frac{(3-\beta)\ell + \beta -2}{\beta-1}\right)} 
    (2m-n)^{\frac{(3-\beta)\ell -1}{\beta-1}} \,
    \zeta(\beta)^{n}.
 \end{align*}
Now, from the saddle-point equation the term $(2m-n)$ can also be written as
\[
\left(
\frac{\zeta (\beta-1) }{ \zeta (\beta)} -1
\right) \, 
n.
\]
We obtain the asymptotics of $\mg_{n,m,\Delta_\beta}^{[F]}$:
\[
	\mg_{n,m,\Delta_\beta}^{[F]} \sim \frac{(2m)!  n^{\ell}}{(2m)^{\ell} \ell!^2 2^{\ell}} 
    \frac{\tau^{\frac{(\beta-3)\ell +1}{\beta-1}} \Gamma(3-\beta)^\ell }{(\beta-1)\zeta(\beta)^\ell \, \Gamma\left(\frac{(3-\beta)\ell + \beta -2}{\beta-1}\right)} 
    \left(\frac{\zeta (\beta-1) }{ \zeta (\beta)} -1\right)^{\frac{(3-\beta)\ell -1}{\beta-1}}
    n^{\frac{(3-\beta)\ell -1}{\beta-1}} \,
    \zeta(\beta)^{n}.
\]
Similarly, we can obtain the asymptotics of $\mg_{n,m,\Delta_\beta}$ (by simply setting $\ell=0$ in the above formula):
\[
	\mg_{n,m,\Delta_\beta} \sim (2m)!
    \frac{\tau^{\frac{1}{\beta-1}} }{(\beta-1) \, \Gamma\left(\frac{\beta -2}{\beta-1}\right)} 
    \left(\frac{\zeta (\beta-1) }{ \zeta (\beta)} -1\right)^{-\frac{1}{\beta-1}}
    n^{-\frac{1}{\beta-1}} \,
    \zeta(\beta)^{n}.
\]
Setting 
\begin{align*}
\kappa_\beta 
:= & \frac{\tau^{\frac{\beta-3}{\beta-1}\ell}}{ \ell!^2 2^{\ell}}  
\left( \frac{\zeta (\beta-1) }{ \zeta (\beta)} -1 \right)^{\frac{3-\beta}{\beta-1}\ell}
\,
\frac{ \Gamma(\frac{\beta-2}{\beta-1}) }{\Gamma\left(\frac{(3-\beta)\ell+\beta -2}{\beta-1}\right)} \left(\frac{\Gamma(3-\beta)}{\zeta(\beta-1)}\right)^\ell \\
= & \frac{1}{\ell!^2 2^{\ell}}  
\left( \frac{\zeta (\beta) }{ \Gamma (1-\beta)} \right)^{\frac{\beta-3}{\beta-1}\ell}
\,
\frac{ \Gamma(\frac{\beta-2}{\beta-1}) }{\Gamma\left(\frac{(3-\beta)\ell+\beta -2}{\beta-1}\right)} \left(\frac{\Gamma(3-\beta)}{\zeta(\beta-1)}\right)^\ell \\
\end{align*}
finishes the proof of Theorem~\ref{th:power-law-cycles}.
\end{proof}

\subsection{When Condition (C3) does not hold: Eulerian multigraphs and graphs with periodic degree sequence}
\label{sec:failed-regular}

When there exist integers $r \geq 0$ and $p \geq 2$
such that the index of any nonzero coefficient of $\Delta$
is equal to $r$ modulo $p$, the function $\Delta$ is said to be \emph{periodic}.
When $p$ is the largest integer satisfying this property,
then $\Delta$ is said to be \emph{$p$-periodic}.
This is in particular the case of Eulerian graphs,
where each vertex has an even degree, so
\[
	\Delta(x) = \cosh(x).
\]
According to the Daffodil Lemma (\cite[Lemma IV.1]{FS09}),
there is a function $\Omega$ analytic at $0$
and aperiodic (\ie its largest period is $1$) such that
\[
	\Delta(x) = x^r \Omega(x^p).
\]
Condition~(C\ref{assump:2}), which states that the main contribution of the integral of $\Delta$
on a circle centered at the origin comes from the part close to the real axis,
no longer holds.
Indeed, for any positive real value $\chi$ and $p$th root $\rho$ of unity, we have
\[
	|\Delta(\rho \chi)| = |(\rho \chi)^r \Omega(\rho^p \chi^p)| = \Delta(\chi).
\]
The number of edges of an $(n,m,\Delta)$-multigraph must then satisfy
\[
	2 m = \sum_{v \in V(G)} \deg(v) = n r + p \sum_{v \in V(G)} \frac{\deg(v) - r}{p},
\]
so $2 m - n r$ must be divisible by $p$.

According to Lemma~\ref{lem:total_weight},
the total weight of $(n,m,\Delta)$-multigraphs is
\[
	\mg_{n,m,\Delta} = (2m)! [x^{2m}] \Delta(x)^n = (2m)! [x^{2m}] x^{n r} \Omega(x^p)^n.
\]
This implies that $p$ must divide $2m - n r$
(otherwise, there are no $(n,m,\Delta)$-multigraphs, so the total weight is $0$), and
\[
	\mg_{n,m,\Delta} = (2m)! [x^{\frac{2m - n r}{p}}] \Omega(x)^n.
\]

Likewise, for each integer $d$, the function $x^d \Delta^{(d)}(x)$ is also $p$-periodic,
so there is a function $\Omega_d$ such that
\[
	x^d \Delta^{(d)}(x) = x^r \Omega_d(x^p).
\]
The equation
\[
	\mg_{n,m,\Delta}^{[\mF]} =
    \frac{n! 2^{m+\frac{1}{2}} m! (2m)^{m + \frac{1}{2}}}{n^n \sqrt{\pi}}
    \int_{t=0}^{\infty}
    [z^n x^{2m}]
    F \left(n z, \frac{1}{2m t^2}, 
    	\left( \frac{x^d \Delta^{(d)}(x)}{\Delta(x)} \right)_{d \geq 0} \right)
    e^{n z} \Delta(x)^n t^{2m} e^{-m t^2} \de t
\]
becomes
\[
	\mg_{n,m,\Delta}^{[\mF]} =
    \frac{n! 2^{m+\frac{1}{2}} m! (2m)^{m + \frac{1}{2}}}{n^n \sqrt{\pi}}
    \int_{t=0}^{\infty}
    [z^n x^{\frac{2m - n r}{p}}]
    F \left(n z, \frac{1}{2m t^2}, 
    	\left( \frac{\Omega_d(x)}{\Omega(x)} \right)_{d \geq 0} \right)
    e^{n z} \Omega(x)^n t^{2m} e^{-m t^2} \de t.
\]
The rest of the analysis of subgraphs in $(n,m,\Delta)$-multigraphs
is then the same, and we conclude that
any multigraph $F$ with generating function $F(z,w,\vy)$ satisfies the following assertion: If $m := m(n)$ tends to infinity with $n$ in such a way that
\begin{itemize}
\item
$p$ divides $2 m - n r$,
\item
and the term 
\[
	E_n = 
    F \left(n, \frac{1}{2m}, 
    	\left( \frac{\Omega_d(\chi)}{\Omega(\chi)} \right)_{d \geq 0} \right), 
\]

where $\chi := \chi(n)$ is the unique positive solution of the equation 
\[
	\frac{\chi \Omega'(\chi)}{\Omega(\chi)} = \frac{2 m - n r}{p n},
\]
has a finite limit, as $n$ tends to infinity, 
\end{itemize}
then the limit of $E_n$ is equal to the asymptotic expected number of $F$-subgraphs
in a random $(n,m,\Delta)$-multigraph.

When $2m / n$ tends to infinity, then $\chi$ tends to infinity with $n$.
In that case, there are other more subtle ways for Condition~(C\ref{assump:2}) to fail.
For example, $\Delta$ could be aperiodic,
but may be close to a periodic function
on circles of large radius.
For example, Condition~(C\ref{assump:2}) is not valid for
\[
	\Delta(x) = \sinh(x) + 1
\]
for any $2m/n$ tending to infinity.
Those weights correspond to multigraphs
where each vertex has either odd degree or degree $0$.
Those other examples require a case-by-case refinement of our analysis
to determine the contributions on the circle of radius $\chi$.

\section{Perspectives, future works and limitations}

We presented an approach via analytic combinatorics to the problem of counting subgraphs in various models of random graphs. The approach gives precise expressions for subgraph counts and distributional results on the number of subgraphs belonging to an \emph{a priori} given family of graphs. The notion of patchworks was the crucial concept to keep track of vertices and edges even if several copies of subgraphs overlap. 

This approach can certainly be enhanced into several directions. One is the treatment of strictly balanced \emph{induced} subgraphs. There are already several examples in the literature treating ``fixed'' as well as induced subgraphs. 

Another direction is weakening the balancedness property. In order to cope with overlappings, it proved convenient to require that the subgraphs under consideration must be strictly balanced. It seems hard to drop this requirement completely, but the condition can be weakened to barely balanced subgraphs (for the definition see the end of Section~\ref{sec_Models}). Certainly, some additional technicalities have to be overcome, but we are confident that the methodology presented in this paper can be adapted to cover subgraph counts for barely balanced graphs. This problem is the topic of a forthcoming paper, see~\cite{CG19}. 

What is currently not covered by our approach are random graph with other constraints than constraints on their node degrees. It seems not out of reach to extend the approach to graph families whith properties being  amenable to specifications with generating functions. Examples for such graph families are probably connected graphs or multi-partite graphs. 

Except for varying the graph family we may think of modifying the stage in the random graph process. It is well known that there are many copies of a given subgraph if we are beyond the threshold. So far, we have no results on what \emph{exactly} happens beyond the threshold, \emph{i.e.}, asymptotics for subgraph counts, Gaussian limiting distributions, \emph{etc.} As mentioned at the end of the last section, certain subtleties may arise which seem to require a case-by-case study. 

What seems currently out of reach is the counting problem for graph families defined via topological properties like forbidding certain minors. In this case, we would need the series of minor patchworks. Our approach enables us to count graphs with a distinguished occurrence of a minor, but this does not allow us to count graphs with at least one occurrence, since the graphs with many occurrences will appear more often than the ones with few occurrences. The resulting distribution is therefore not uniform. 

Another problem we currently cannot solve is the case of infinite families of subgraphs. For instance, when the subgraph family is the set of all cycles (of arbitrary length) and we are asking for the number of $\mathcal F$-free graphs, then the resulting counting problem is that for the number of trees on $n$ vertices. Although our approach is quite powerful and general, it is currently not allowing to cover this simple problem. Since having a certain minor or not may also be described in our setting by means of an infinity (multi)graph family, an extension to infinite family seem particularly desirable.

\bigskip
\paragraph{Acknowledgment.} We thank a referee for careful reading which lead to substantial improvements of the original paper.

\bibliographystyle{plain}
\bibliography{biblio}

\newpage

\setcounter{section}{0}
\renewcommand{\thesection}{A\arabic{section}}

		\section{Appendix: analytic and combinatorial tools}
        \label{sec:tools}

\subsection{Lagrange inversion}
The Lagrange inversion theorem allows us to obtain the coefficients of a function defined by an implicit equation; see for example~\cite[p.~732-33]{FS09}.
Here we use it in the following form:
\begin{lemma}\label{lem:Lagrange}
Let $\Phi(t), H(t)$ be formal power series, such that: $\Phi(0) \neq 0$. Let $T(z)$ be a function implicitly defined by: $T(z) = z \Phi (T(z))$. Then, for all $n \ge 0$:
\[ 
	[t^n] H(t) \Phi(t)^n = [z^n] \frac{z T'(z)}{T(z)} H(T(z)).
\]
\end{lemma}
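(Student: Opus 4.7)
The plan is to reduce this identity to the classical Bürmann form of the Lagrange inversion formula via a change of variable in a contour integral. First I would establish an auxiliary identity by differentiating the defining equation $T(z)=z\Phi(T(z))$ implicitly. This gives $T'(z)=\Phi(T(z))+z\Phi'(T(z))\,T'(z)$, hence
\[
T'(z)\bigl(1-z\Phi'(T(z))\bigr)=\Phi(T(z))=\frac{T(z)}{z},
\]
so that
\[
\frac{z\,T'(z)}{T(z)}=\frac{1}{1-z\Phi'(T(z))}.
\]
The target identity therefore becomes $[t^n]H(t)\Phi(t)^n=[z^n]\dfrac{H(T(z))}{1-z\Phi'(T(z))}$, which is exactly the Bürmann form of Lagrange inversion.

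To prove this form from scratch (rather than citing it), I would use a contour integral representation. Since $\Phi(0)\neq 0$, the map $z=t/\Phi(t)$ is analytic and invertible in a neighbourhood of $t=0$, with analytic inverse $t=T(z)$. Starting from
\[
[t^n]H(t)\Phi(t)^n=\frac{1}{2\pi i}\oint H(t)\,\Phi(t)^n\,\frac{dt}{t^{n+1}}
\]
on a small positively oriented circle around $0$, I would substitute $t=T(z)$, using $\Phi(T(z))^n/T(z)^{n+1}=1/(z^n T(z))$ (coming from $\Phi(T(z))=T(z)/z$) and $dt=T'(z)\,dz$. The integral transforms into
\[
\frac{1}{2\pi i}\oint\frac{T'(z)}{T(z)}H(T(z))\,\frac{dz}{z^n}=[z^{n-1}]\frac{T'(z)}{T(z)}H(T(z))=[z^n]\frac{z\,T'(z)}{T(z)}H(T(z)),
\]
which, combined with the auxiliary identity, is precisely what we wanted.

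No step is really an obstacle here: the only point that needs a moment of care is justifying the change of variable, which amounts to checking that $z=t/\Phi(t)$ is a local analytic bijection near the origin; this is immediate from $\Phi(0)\neq 0$, which makes the derivative at $0$ equal to $1/\Phi(0)\neq 0$, so that the inverse function theorem applies. Everything else is formal manipulation, and the same argument works at the level of formal power series (where the contour integral is read as the usual formal residue), so the statement holds regardless of convergence issues.
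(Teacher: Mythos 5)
Your proof is correct and uses essentially the same argument as the paper: the change of variable $t=T(z)$, i.e.\ $z=t/\Phi(t)$, in the Cauchy/residue integral for the coefficient extraction (the paper just runs the substitution in the opposite direction, from the $z$-side to the $t$-side). The preliminary detour through the identity $\frac{zT'(z)}{T(z)}=\frac{1}{1-z\Phi'(T(z))}$ is harmless but unnecessary, since your contour computation already lands directly on $[z^n]\frac{zT'(z)}{T(z)}H(T(z))$.
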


\begin{proof}
It follows from a simple change of variable $t=T(z)$, \emph{i.e.} $z=t/\Phi(t)$:
\[
	[z^n] \frac{z T'(z)}{T(z)} H(T(z)) = \frac{1}{2i\pi} \oint \frac{z T'(z)}{T(z)} H(T(z)) \frac{dz}{z^{n+1}} = \frac{1}{2i\pi} \oint \frac{H(t)}{t} \frac{dt}{(t/\Phi(t))^n} = [t^n] H(t) \Phi(t)^n. \qedhere
\]
\end{proof}

\subsection{The saddle-point heuristic}
\label{sec:saddle-point-heuristic}


We use in our proofs a simple case of the Laplace method
(see e.g.\ the book  of Pemantle and Wilson~\cite{PW13}).

\begin{lemma}
[Laplace]
Consider two entire functions $A(t)$ and $B(t)$, where $B(t)$ is a positive function that reaches its unique maximum on the real open interval $I$ at a point $r$, and $A(r) \neq 0$.
Then we have
\[
	\int_I A(t) B(t)^n \de t
    \sim 
    A(r) \int_I B(t)^n \de t
\]
whenever the integral is well defined.
\end{lemma}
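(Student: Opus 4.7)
The plan is to reduce the claim to the classical Laplace method by writing $B(t)^n = e^{n \phi(t)}$ with $\phi(t) := \log B(t)$, which is well-defined and real-analytic since $B$ is entire and positive. The function $\phi$ attains its unique maximum on $I$ at the same point $r$ as $B$, so $\phi'(r) = 0$, and generically $\phi''(r) < 0$ (the degenerate case of higher-order vanishing at $r$ only causes notational inconvenience and is handled in the same way). After subtracting $A(r) \int_I B(t)^n \de t$ from both sides, the statement reduces to showing that
\[
	\int_I \bigl( A(t) - A(r) \bigr) B(t)^n \de t = \smallo \left( \int_I B(t)^n \de t \right).
\]

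For this I would split the integral at a small neighborhood $I_\delta := I \cap (r - \delta, r + \delta)$. Fixing $\eps > 0$, continuity of $A$ at $r$ allows me to choose $\delta > 0$ such that $|A(t) - A(r)| < \eps$ on $I_\delta$; the contribution from $I_\delta$ is then at most $\eps \int_I B(t)^n \de t$. For the complement $I \setminus I_\delta$, the uniqueness of the maximum yields $\sup_{t \in I \setminus I_\delta} B(t) \leq B(r) - \eta$ for some $\eta > 0$, so the tail is exponentially bounded by $(B(r) - \eta)^n$ times a factor controlled by the well-definedness hypothesis on $\int_I |A(t)| B(t)^n \de t$. Meanwhile, a standard local Laplace expansion around $r$ (using $\phi''(r) < 0$) shows that $\int_I B(t)^n \de t$ is at least of order $B(r)^n / \sqrt{n}$. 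Comparing the two estimates, the tail is negligible relative to $\int_I B(t)^n \de t$, and letting $\eps \to 0$ finishes the proof.

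The hard part will be justifying the gap $\sup_{t \in I \setminus I_\delta} B(t) \leq B(r) - \eta$ when the interval $I$ is unbounded, since uniqueness of the maximum alone does not suffice: one also needs that $B(t)$ cannot approach $B(r)$ along the boundary of $I$. This is where the well-definedness hypothesis on $\int_I A(t) B(t)^n \de t$ must be invoked to rule out pathological behavior of $B$ at infinity. In all concrete applications of the lemma in the paper, $B$ has explicit Gaussian-type decay away from $r$ (typically of the form $B(t) = t^{2m} e^{-m t^2}$ or similar), so the tail estimate is immediate and one may even dispense with the abstract hypothesis.
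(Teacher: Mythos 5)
The paper itself gives no proof of this lemma: it is stated as a known ``simple case of the Laplace method'' with a pointer to Pemantle and Wilson, so there is no in-paper argument to compare against, and your proposal must stand on its own. Its architecture is the standard one: reduce to $\int_I (A(t)-A(r))B(t)^n\de t = \smallo\bigl(\int_I B(t)^n\de t\bigr)$, handle a neighbourhood of $r$ by continuity of $A$, and kill the tail by an exponential gap. One simplification: you do not need $\phi''(r)<0$ or any local expansion, since the crude lower bound $\int_I B(t)^n\de t \geq |J|\,(B(r)-\eps')^n$ for a small interval $J\ni r$ on which $B\geq B(r)-\eps'$ already dominates any tail of size $\bigO\bigl((B(r)-\eta)^{n-n_1}\bigr)$ once $\eps'<\eta$; this also disposes of your ``degenerate case'' without further comment.

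The genuine gap is the one you flag yourself, and your proposed repair does not work: well-definedness of the integrals does \emph{not} imply $\sup_{I\setminus I_\delta}B\leq B(r)-\eta$, and the lemma as literally stated is false without some such separation hypothesis --- even on a bounded interval where every integral is finite. Take $I=(-1,1)$, $B(t)=e^{-t^2(t+1)^2}$ and $A(t)=1+t$. Then $B$ is entire, positive, and attains its maximum $1$ on $I$ uniquely at $r=0$, yet $B(t)\to 1$ as $t\to -1^{+}$. A two-peak Laplace computation gives $\int_I B(t)^n\de t\sim\tfrac32\sqrt{\pi/n}$ (a full Gaussian peak at $0$ plus half a peak at the excluded endpoint $-1$), while $\int_I A(t)B(t)^n\de t\sim\sqrt{\pi/n}$ because $A(-1)=0$, so the claimed equivalence fails by a factor $2/3$. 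The missing ingredient is therefore an explicit hypothesis that $B$ is bounded away from $B(r)$ outside every neighbourhood of $r$ (equivalently, that $B$ does not approach its supremum along the boundary of the open interval $I$, including at infinity). With that hypothesis your tail estimate $\bigl(\sup_{I\setminus I_\delta}B\bigr)^{n-n_1}\int_I|A(t)-A(r)|\,B(t)^{n_1}\de t$ goes through and the proof closes. As you observe, this separation is immediate in every application in the paper (e.g.\ $B(t)=t^2e^{-t^2}$ on $(0,\infty)$), so the lemma is used correctly even though it is stated too liberally.
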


The saddle-point method is a technique to compute
the asymptotics of the coefficients of a generating function.
The coefficient extraction is written as a Cauchy integral,
on which a Laplace method is applied.
There exist many variations of this technique.
We will use here the following lemma,
which is a particular case of Theorem~VIII.8 from \cite{FS09}.

\begin{lemma}[Saddle point] 
\label{th:large_powers}
Consider two entire functions $A(z)$ and $B(z)$,
and a sequence of integers $N(n)$ such that $N(n)/n$
has a positive finite limit $\lambda$.
Assume there exists a unique positive solution $r$ to the equation
\[
	\frac{r B'(r)}{B(r)} = \lambda,
\]
such that $A(r) \neq 0$ and $\left( \frac{r B'(r)}{B(r)} \right)' \neq 0$. Then
\[
	[z^N] A(z) B(z)^n \sim A(r) [z^N] B(z)^n.
\]
\end{lemma}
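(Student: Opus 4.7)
The plan is to apply the standard saddle-point method to both Cauchy integrals on a common circle centred at the saddle point, and then exploit the fact that $A$ is a slowly-varying modulation compared to $B(z)^n z^{-N}$ on the contour. Concretely, I would let $r_n$ denote the positive solution of $z B'(z)/B(z) = N/n$. Since by assumption the map $z \mapsto z B'(z)/B(z)$ has nonzero derivative at $z = r$ and since $N/n \to \lambda$, the implicit function theorem provides such an $r_n$ for $n$ large, unique in a neighbourhood of $r$, with $r_n \to r$. In particular $A(r_n) \to A(r) \neq 0$ by continuity.

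Next I would express both coefficients as Cauchy integrals on the circle $|z| = r_n$,
\[
    [z^N] A(z) B(z)^n = \frac{1}{2\pi i}\oint_{|z|=r_n} \frac{A(z) B(z)^n}{z^{N+1}}\, \de z,
    \qquad
    [z^N] B(z)^n = \frac{1}{2\pi i}\oint_{|z|=r_n} \frac{B(z)^n}{z^{N+1}}\, \de z,
\]
and invoke Theorem~VIII.8 of~\cite{FS09} applied to $B(z)^n z^{-N}$. That theorem provides a shrinking arc $|\arg z| \leq \delta_n$, with $\delta_n \to 0$, on which the integrand concentrates (with a Gaussian scaling coming from the local quadratic behaviour guaranteed by $(r B'(r)/B(r))' \neq 0$), while the complementary arc contributes a negligible remainder. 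On the central arc the entire function $A$ satisfies $A(z) = A(r_n) + \bigO(\delta_n)$, so $A$ may be replaced by the constant $A(r_n)$ to leading order inside the integral; the correction term is dominated by $\delta_n$ times the leading contribution and is therefore negligible. On the complementary arc, the bound used for $B(z)^n z^{-N}$ is multiplied by $\sup_{|z|=r_n}|A(z)|$, which remains bounded since $A$ is entire and $r_n$ stays in a compact set, so that contribution is negligible as well.

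Putting these pieces together gives
\[
    [z^N] A(z) B(z)^n \sim A(r_n)\, [z^N] B(z)^n \sim A(r)\, [z^N] B(z)^n,
\]
which is the claim. The main obstacle, and the reason the statement is presented as a specialisation of Theorem~VIII.8 of~\cite{FS09} rather than derived from scratch, is exactly the uniform control of the tail arc: one needs an admissibility-type estimate ensuring that $|B(z)|$ decays sufficiently fast away from $z = r_n$ on the circle. Once that tail bound is available, the modulation by $A$ is essentially cosmetic, because $A$ is smooth and nonvanishing at the saddle point; the non-degeneracy condition $(rB'(r)/B(r))' \neq 0$ makes sure the saddle point is a genuine quadratic one so that the standard Gaussian scaling applies without refinement.
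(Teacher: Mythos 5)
Your proposal is correct and matches the paper's treatment: the paper gives no proof at all, stating the lemma as a particular case of Theorem~VIII.8 of Flajolet--Sedgewick, and your central-arc/tail-arc decomposition with $A(z)=A(r_n)+\bigO(\delta_n)$ on the central arc is exactly the standard argument behind that theorem (which in fact already includes the $A(z)$ modulation, so the result follows by applying it once with $A$ and once with $A\equiv 1$ and dividing). The only caveat, which affects the paper's statement as much as your write-up, is that the tail estimate you import implicitly requires the usual admissibility hypotheses on $B$ (e.g.\ nonnegative coefficients and aperiodicity) that are not listed among the lemma's assumptions.
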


\end{document}